\numberwithin{equation}{section}
\newtheorem{Th}{Theorem}[section]
\newtheorem{Cor}[Th]{Corollary}
\newtheorem{lem}[Th]{Lemma}
\newtheorem{Prop}[Th]{Proposition}
\theoremstyle{definition}
\newtheorem{Def}[Th]{Definition}
\newtheorem{Rem}[Th]{Remark}
\newcommand{\e}{{\boldsymbol{e}}}
\begin{document}
\allowdisplaybreaks

\newcommand{\arXivNumber}{1807.07703}

\renewcommand{\thefootnote}{}

\renewcommand{\PaperNumber}{041}

\FirstPageHeading

\ShortArticleName{Hecke Operators on Vector-Valued Modular Forms}

\ArticleName{Hecke Operators on Vector-Valued Modular Forms\footnote{This paper is a~contribution to the Special Issue on Moonshine and String Theory. The full collection is available at \href{https://www.emis.de/journals/SIGMA/moonshine.html}{https://www.emis.de/journals/SIGMA/moonshine.html}}}

\Author{Vincent BOUCHARD~$^\dag$, Thomas CREUTZIG~$^{\dag\ddag}$ and Aniket JOSHI~$^\dag$}

\AuthorNameForHeading{V.~Bouchard, T.~Creutzig and A.~Joshi}

\Address{$^\dag$~Department of Mathematical \& Statistical Sciences, University of Alberta,\\
\hphantom{$^\dag$}~632 Central Academic Building, Edmonton T6G 2G1, Canada}
\EmailD{\href{mailto:vincent.bouchard@ualberta.ca}{vincent.bouchard@ualberta.ca}, \href{mailto:creutzig@ualberta.ca}{creutzig@ualberta.ca}, \href{mailto:asjoshi@ualberta.ca}{asjoshi@ualberta.ca}}

\Address{$^\ddag$~Research Institute for Mathematical Sciences, Kyoto University, Kyoto 606-8502, Japan}

\ArticleDates{Received September 26, 2018, in final form May 13, 2019; Published online May 25, 2019}

\Abstract{We study Hecke operators on vector-valued modular forms for the Weil representation $\rho_L$ of a lattice $L$. We first construct Hecke operators $\mathcal{T}_r$ that map vector-valued modular forms of type $\rho_L$ into vector-valued modular forms of type $\rho_{L(r)}$, where $L(r)$ is the lattice $L$ with rescaled bilinear form $(\cdot, \cdot)_r = r (\cdot, \cdot)$, by lifting standard Hecke operators for scalar-valued modular forms using Siegel theta functions. The components of the vector-valued Hecke operators $\mathcal{T}_r$ have appeared in [\textit{Comm. Math. Phys.} \textbf{350} (2017), 1069--1121] as generating functions for D4-D2-D0 bound states on K3-fibered Calabi--Yau threefolds. We study algebraic relations satisfied by the Hecke operators $\mathcal{T}_r$. In the particular case when $r=n^2$ for some positive integer $n$, we compose $\mathcal{T}_{n^2}$ with a projection operator to construct new Hecke operators $\mathcal{H}_{n^2}$ that map vector-valued modular forms of type $\rho_L$ into vector-valued modular forms of the same type. We study algebraic relations satisfied by the operators $\mathcal{H}_{n^2}$, and compare our operators with the alternative construction of Bruinier--Stein~[\textit{Math.~Z.} \textbf{264} (2010), 249--270] and Stein~[\textit{Funct.\ Approx.\ Comment.\ Math.} \textbf{52} (2015), 229--252].}

\Keywords{Hecke operators; vector-valued modular forms; Weil representation}

\Classification{11F25; 11F27; 17B69; 14N35}

\renewcommand{\thefootnote}{\arabic{footnote}}
\setcounter{footnote}{0}

\section{Introduction}

The intricate mathematical consistency required of physical theories often yields new, unexpected structures in mathematics. For example, it is frequently the case that observables in string theory and gauge theory must have strong invariance properties, which may be far from obvious mathematically. In many instances these invariance properties can be formulated mathematically in terms of modularity statements.

For instance, BPS degeneracies for a particular type of bound states in type IIA string theory were studied in \cite{orig}, namely vertical D4-D2-D0 bound states in K3-fibered Calabi--Yau threefolds. This problem is closely related to black hole entropy \cite{MSW, vafa} and BPS algebras \cite{HM, HM2}. Mathematically, these D4-D2-D0 bound states can be formulated in terms of generalized Donaldson--Thomas invariants \cite{orig,GS}. Physics says that the generating function for such bound states must have strong modularity properties. More precisely, it must be a vector-valued modular form for the Weil representation of a rescaled version of the lattice polarization of the underlying threefold. In~\cite{orig}, a formula for this generating function was obtained, and modularity was proved through explicit (but rather tedious) calculations. Indeed, the proof of modularity was rather technical, while the appearance of modularity hints at a deeper theory. The original motivation for the current paper is to develop a mathematical theory underlying modularity of these generating functions.

It turns out that the modularity properties of these generating functions can be understood in terms of Hecke operators on vector-valued modular forms for the Weil representation. In this paper, we construct these Hecke operators and study their algebraic properties.

A key ingredient in our construction is the Weil representation~\cite{weil}, which is a representation of the metaplectic cover of the modular group on the group algebra of the discriminant form of an even integral lattice~$L$. The Weil representation appears naturally in~\cite{orig}, but it also plays a role in various other contexts, for instance in the construction of generalized Kac--Moody algebras whose denominator identity is an automorphic product (see for example \cite{carnahan, HS, nils1, nils2, n}). A well known example of vector-valued modular forms for the Weil representation { consists of theta functions for the positive definite rank~1 lattice $\mathbb{Z}/2m\mathbb{Z}$}. The fundamental idea behind our construction is to use Siegel theta functions to lift Hecke operators on scalar-valued modular forms to Hecke operators on vector-valued modular forms for the Weil representation. We remark that in the case of positive definite lattices, Martin Raum has already studied Hecke operators between vector-valued modular forms for different Weil representations using Jacobi forms~\cite{raum2}. We generalize this to the indefinite case, and, notably, we also construct Hecke operators that map vector-valued modular forms for a given Weil representation to vector-valued modular forms of the same Weil representation. In addition, Hecke operators on rank~1 Jacobi forms were studied by Eichler and Zagier in~\cite{ez}. Our operators are a generalization of these in view of the bijective correspondence between Jacobi forms of weight~$k$ and index $m$ ($k, m \in \mathbb{N}$) to vector-valued modular forms of weight $k-\frac{1}{2}$ for the Weil representation of the lattice $\big(\mathbb{Z}, q(x)=-mx^2\big)$ (see \cite[Chapter~2]{ez}).

We note that alternative constructions of Hecke operators on vector-valued modular forms already exist in the literature \cite{ajouz, bs,HW, raum2, rossler,stein, wer,raum}, but our construction is more general and, arguably, rather straightforward. It may also be possible to generalize our construction beyond the Weil representation as we will outline in Section~\ref{s:comp}.

In any case, for completeness, in this work we also compare our construction to the alternative framework proposed by Bruinier and Stein in~\cite{bs,stein}.

Let us now summarize the main results of this paper.

\subsection{Summary of results}
Let $L$ be an even non-degenerate integral lattice of signature $(b^+,b^-)$ with bilinear form $(\cdot,\cdot)$, and $A = L'/L$ be the associated discriminant form with $\mathbb{Q}/\mathbb{Z}$-valued quadratic form $q(\cdot) = \frac{1}{2} (\cdot, \cdot)$. We denote by $L(r)$ the lattice $L$ with the rescaled bilinear form $(\cdot, \cdot)_r =r(\cdot,\cdot)$, and by $A(r) = L'(r) / L(r)$ its associated discriminant form, with $\mathbb{Q}/\mathbb{Z}$-valued rescaled quadratic form $q_r(\cdot)=\frac{1}{2} (\cdot, \cdot)_r$.

\subsubsection{Hecke operators between Weil representations}

Let $\{ e_\lambda \}_{\lambda \in A}$ be the standard basis for the vector space $\mathbb{C}[A]$, and $\psi(\tau)=\sum\limits_{\lambda \in A}\psi_\lambda (\tau) \, e_{\lambda}$ be vector-valued modular of weight $(v, \bar v)$ for the Weil representation $\rho_L$ associated to $L$. Our first result is the construction of Hecke operators $\mathcal{T}_r$ that map vector-valued modular forms\footnote{By vector-valued modular forms here and in the rest of the introduction we simply mean $\mathbb{C}[A]$-valued real analytic functions that transform as vector-valued modular forms under the Weil representation~-- see Definition~\ref{d:vvmf}. As explained in Remark \ref{r:vvmf}, we do not impose a growth condition, or holomorphicity (meromorphicity) at the cusps, or some condition involving the Laplacian. We also include ``Jacobi-like'' variables in the definition~-- see Remark~\ref{r:jacobi}.} of type $\rho_L$ to vector-valued modular forms of type~$\rho_{L(r)}$. These Hecke operators are defined by (Definition~\ref{d:hecke1})
\begin{gather*}
\mathcal{T}_r[\psi](\tau) =
r^{w+\bar{w}- 1} \sum_{\mu \in A(r)} \Bigg( \sum_{\substack{k, l >0 \\ k l = r}} \frac{1}{l^{w + \bar w }} \sum_{s=0}^{l-1} \Delta_r(\mu,k) \e\left( - \frac{s}{k} q_r (\mu) \right) \psi_{l \mu} \left( \frac{k \tau + s}{l} \right) \Bigg) e_\mu,
\end{gather*}
where $(w, \bar w) = \big( v + \frac{b^+}{2}, \bar v + \frac{b^-}{2} \big)$, $\e(x)=\exp(2 \pi {\rm i} x)$, and
\begin{gather*}
\Delta_r(\mu,k) = \begin{cases}
1 & \text{if $\mu \in A(l) \subseteq A(r)$,}\\
0 & \text{otherwise.}
\end{cases}
\end{gather*}

The idea behind the construction is to pair the components of the vector-valued modular form $\psi(\tau)$ with the components of Siegel theta functions to construct a scalar-valued modular form, and then apply the standard Hecke operators for scalar-valued modular forms to define our Hecke operators on vector-valued modular forms appropriately. More precisely, let us define an inner product on~$\mathbb{C}[A]$ by
\begin{gather*}
\Braket{\sum_{\lambda \in A} f_\lambda e_\lambda, \sum_{\delta \in A} g_\delta e_\delta} = \sum_{\lambda \in A} f_\lambda \bar{g}_{\lambda}.
\end{gather*}
We then prove that (Theorem~\ref{t:hecke})
\begin{gather*}
T_r\left[ \Braket{\psi, \Theta_L } \right](\tau, \alpha, \beta) = \Braket{\mathcal{T}_r[\psi], \Theta_{L(r)} } (\tau, \alpha, \beta),
\end{gather*}
where $\Theta_L(\tau,\alpha,\beta)$ is the Siegel theta function of the lattice $L$, and $T_r$ are the usual Hecke operators for scalar-valued modular forms. From this relation it follows that, indeed, $\mathcal{T}_r[\psi](\tau)$ is vector-valued modular of type $\rho_{L(r)}$ and weight $(v,\bar v)$. We note that this theorem is a~gene\-ra\-li\-zation to lattices of indefinite signature of a result by Martin Raum~\cite[Proposition~5.3]{raum2}.

Let us remark that the components of $\mathcal{T}_r[\psi](\tau)$ are precisely the generating functions $Z_{r,\delta}$ of D4-D2-D0 bound states (mathematically, generalized Donaldson--Thomas invariants) on K3-fibered Calabi--Yau threefolds studied in~\cite{orig}. Therefore, an immediate corollary of our construction is vector-valued modularity of these generating functions, which was proved by direct calculations in~\cite{orig}.

Our next step is to study algebraic relations satisfied by the operators $\mathcal{T}_r$. To this end we define scaling operators $\mathcal{U}_{n^2}$ on vector-valued modular forms of type $\rho_L$ (Definition~\ref{d:scaling}):
\begin{gather*}
\mathcal{U}_{n^2}[\psi](\tau) = \sum_{\nu \in A(n^2)}\Delta_{n^2}(\nu, n) \psi_{n \nu}(\tau) e_\nu.
\end{gather*}
These are appropriate scaling operators since (Lemma~\ref{l:scaling}):
\begin{gather*}
U_{n^2}\left[ \Braket{\psi, \Theta_L} \right] (\tau, \alpha, \beta) = \Braket{ \mathcal{U}_{n^2}[\psi], \Theta_{L(n^2)}} (\tau, \alpha, \beta),
\end{gather*}
where $U_{n^2}[f](\tau,\alpha,\beta) = f(\tau, n \alpha, n\beta)$ are the standard scaling operators for scalar-valued modular forms. Then we show that (Theorem~\ref{t:heckealg1}):
\begin{itemize}\itemsep=0pt
\item for $m$ and $n$ such that $\gcd(m,n)=1$,
\begin{gather*}
\mathcal{T}_m \circ \mathcal{T}_n = \mathcal{T}_{mn};
\end{gather*}
\item for $l \geq 2$ and $p$ prime,
\begin{gather*}
\mathcal{T}_{p^{l}} = \mathcal{T}_{p} \circ \mathcal{T}_{p^{l-1}} - p^{w + \bar w -1} \mathcal{U}_{p^2} \circ \mathcal{T}_{p^{l-2}} .
\end{gather*}
\end{itemize}
Those properties are analogous to the algebraic relations satisfied by the scalar-valued Hecke operators~$T_r$.

\subsubsection{Hecke operators on the Weil representation}

We then focus on the special case when $r=n^2$ for some integer $n$. In this case, we show (Lemma~\ref{l:subrep}) that $\rho_L$ is a sub-representation of the Weil representation $\rho_{L(n^2)}$ for the rescaled lattice $L(n^2)$. This allows us to define projection operators $\mathcal{P}_{n^2}$ (Definition~\ref{d:projection}), which take vector-valued modular forms of type $\rho_{L(n^2)}$ into vector-valued modular forms of type $\rho_L$ of the same weight. These projection operators act as left inverses of the scaling operators (Lemma~\ref{l:inverses}):
\begin{gather*}
\mathcal{P}_{n^2} \circ \mathcal{U}_{n^2} = \mathcal{I}.
\end{gather*}

These projection operators allow us to define new Hecke operators $\mathcal{H}_{n^2}$ which map vector-valued modular forms of type $\rho_L$ into vector-valued modular forms of the same type and weight (Definition~\ref{d:hecke2}):
\begin{gather*}
\mathcal{H}_{n^2} = \mathcal{P}_{n^2} \circ \mathcal{T}_{n^2}.
\end{gather*}
The explicit expression for $\mathcal{H}_{n^2}$ is given by (Proposition~\ref{p:Ytau}):
\begin{gather*}
\mathcal{H}_{n^2}[\psi](\tau) =n^{2(v+\bar v-1)} \sum_{\lambda \in A} \Bigg(\sum_{\substack{\gamma \in A(n^2)\\ n \gamma = \lambda}} \sum_{\substack{k, l >0 \\ k l = n^2}} \frac{1}{l^{v + \bar v + \frac{1}{2} \dim(L) }} \nonumber \\
\hphantom{\mathcal{H}_{n^2}[\psi](\tau) =}{} \times \sum_{s=0}^{l-1} \Delta_{n^2}(\gamma,n) \Delta_{n^2}(\gamma,k) \e\left( - \frac{s}{k} q_{n^2} (\gamma) \right) \psi_{l \gamma} \left( \frac{k \tau + s}{l} \right) \Bigg) e_\lambda.
\end{gather*}

As for $\mathcal{T}_{n^2}$, we study algebraic relations satisfied by the $\mathcal{H}_{n^2}$. We obtain (Theorem~\ref{t:Salgebra}):
\begin{itemize}\itemsep=0pt
\item for $m$ and $n$ such that $\gcd(m,n)=1$,
\begin{gather*}
\mathcal{H}_{m^2} \circ \mathcal{H}_{n^2} = \mathcal{H}_{m^2 n^2};
\end{gather*}
\item for $l \geq 2$ and $p$ prime,
\begin{gather*}
\mathcal{H}_{p^{2l}}=\mathcal{P}_{p^{2l-2}} \circ \mathcal{H}_{p^2} \circ \mathcal{H}_{p^{2l-2}} \circ \mathcal{U}_{p^{2l-2}} - p^{w+\bar w-1} \mathcal{H}_{p^{2l-2}} - p^{2(w+\bar w-1)} \mathcal{H}_{p^{2l-4}}.
\end{gather*}
\end{itemize}
The recursion relation is slightly different from the standard one for scalar-valued Hecke ope\-ra\-tors. This is due to two reasons: first, $\mathcal{H}_{r}$ is only defined when $r=n^2$, and second, the projection operators $\mathcal{P}_{n^2}$ and Hecke operators $\mathcal{T}_{m^2}$ only commute when $m$ and $n$ are coprime (Lemma~\ref{l:commPT}).

\subsection{Comparison to other constructions}

\subsubsection{Comment on the relation to the work of Eichler--Zagier}
Eichler and Zagier study the space of rank~1 Jacobi forms of weight $k$ and index $m$ denoted by~$J_{k,m}$ in~\cite{ez}. In particular, they construct Hecke operators $U_l$, $V_l$, $T_l$ that map the space~$J_{k,m}$ to~$J_{k,ml^2}$,~$J_{k,ml}$ and~$J_{k,m}$ respectively for $k,l,m \in \mathbb{N}$. These parallel the Hecke operators discussed in this paper and we will point out some of these connections in Sections~\ref{hecke} and~\ref{s:n2}. Our Hecke operators~$\mathcal{T}_{r}$ and $\mathcal{H}_{n^2}$ are maps between vector-valued modular forms for the Weil representation of lattices related by a rescaling or between vector-valued modular forms for the Weil representation of the same lattice. In the rank 1 case, these behave like operators that multiply and preserve the index respectively. In addition, several of the algebraic relations between Hecke operators in this article have analogues in the work of Eichler--Zagier.

\subsubsection{Comparison to the work of Bruinier and Stein \cite{bs,stein}}\label{s:comp}

Hecke operators that map vector-valued modular forms of type $\rho_L$ into vector-valued modular forms of the same type and weight were also constructed by Bruinier and Stein in~\cite{bs,stein}. The approach however is quite different. In \cite{bs} the authors first construct Hecke operators $T_{m^2}^{\rm (BS)}$ where $m$ is a positive integer that is coprime with the level $N$ of the lattice $L$. They do so by extending the Weil representation of $Mp_2(\mathbb{Z})$ to some appropriate subgroup of $\widetilde{\mathrm{GL}}_2^+(\mathbb{Q})$. They then extend their construction to Hecke operators $T_{m^2}^{\rm (BS)}$ for all positive integers $m$. However, explicit formulae are only given when $m$ is coprime with the level of the lattice. Stein generalizes this in~\cite{stein} by providing the explicit action of their Hecke operators $T_{p^{2l}}^{\rm (BS)}$ for any odd prime $p$ and positive number~$l$.

Given that the construction of Bruinier and Stein is \emph{a priori} quite different from ours, it is interesting to compare the two and investigate whether the resulting Hecke operators $T_{p^{2l}}^{\rm (BS)}$ and~$\mathcal{H}_{p^{2l}}$ are the same. In Section~\ref{form2}, we prove a precise match between our Hecke operators and the Bruinier--Stein Hecke operators. More precisely, we get an exact match only after fixing a calculational mistake in~\cite{stein}. We believe that there is a mistake in the statement and proof of Theorem~5.2 of~\cite{stein} that provides explicit formulae for their extension of the Weil representation. We redid the calculation and obtained slightly different formulae. For completeness, we present our derivation in Appendix~\ref{a:derivation}. We get an exact match with the Bruinier--Stein Hecke operators only when we use the alternative formulae for their extension of the Weil representation that we derive in Appendix~\ref{a:derivation}.

While our Hecke operators match with the Bruinier--Stein Hecke operators, we note however that our construction is fairly straightforward and more general. For instance, our Hecke operators are constructed for any $r$. But perhaps more interestingly, our construction should generalize beyond the Weil representation: it should apply whenever one has a pairing of two vector-valued modular forms that yield a scalar-valued modular form, to which one can apply standard Hecke operators. The key is to choose one of the two vector-valued modular forms carefully so that we know how it transforms under the action of ${\rm GL}_2^+(\mathbb{Q})$. In the case of the Weil representation, this was accomplished by using Siegel theta functions for the pairing.

In particular this could also be done for representations $\rho$ whose kernel contains a principal congruence subgroup (called congruence representations in literature). In this case, it is possible to embed $\rho$ in a Weil representation~$\rho_L$ associated to a lattice~$L$ (see~\cite{ehol}) and apply the construction in this paper by pairing it with `dual objects' written in terms of Siegel theta functions of $L$. However, the details remain to be worked out.

\looseness=-1 But pairings of vector-valued modular forms are standard in rational conformal field theory. For example, the Hilbert space of a full rational conformal field theory is a module for two commuting rational vertex algebras and its character is given by the pairing of the character vectors of the two vertex algebras. It may then be possible to apply our construction in these cases as well, which might actually be an interesting connection to recent results of Harvey and Wu.

\subsubsection{Comment on the recent work of Harvey and Wu \cite{HW}}

\looseness=-1 Very recently Harvey and Wu proposed a construction of Hecke operators for vector-valued modu\-lar forms of the type that appear as characters of rational conformal field theories. A~rational conformal field theory corresponds to a strongly rational vertex operator algebra, that is a vertex algebra whose category of grading restricted weak modules is a modular tensor cate\-go\-ry~\cite{Huang}. The linear span of one-point functions of these modules is then a vector-valued modular form~\cite{Zhu}. Harvey and Wu's Hecke operators act on such vector-valued modular forms; in the examples that they consider, they map character vectors of a given vertex algebra to character vectors of another vertex algebra. The involved tensor categories are Galois conjugates of each other.

We do not compare our results to these recent findings. But we would like to make a brief comment. Our strategy is to first pair a~vector-valued modular form with a dual one to get a~scalar-valued one, then apply standard Hecke operators to this object, and then somehow go back to vector-valued modular forms. This procedure also has a nice vertex algebra perspective.
Assume that you have two strongly rational vertex algebras $V$ and $W$ with modular tensor categories~$\mathcal C$ and $\mathcal D$, such that these categories are braid-reversed equivalent. Then the canonical algebra object (see \cite[Section~7.9]{etingof}) extends $V\otimes W$ to a larger vertex algebra $A$ \cite{CKM, HKL} that is self-dual, i.e., $A$ has only one simple module, $A$ itself, and its character is modular. Applying a~standard Hecke operator to this scalar-valued modular form gives another scalar-valued modular form. A natural question is wether this resulting modular form also corresponds to the character of a self-dual vertex algebra and if this vertex algebra is an extension of interesting subalgebras. To give a concrete example: let $V$ be the affine vertex algebra of $\mathfrak {g}_2$ at level one and $W$ the affine vertex algebra of $\mathfrak f_4$ at level one. Then both $V$ and $W$ have only two inequivalent simple objects and their modular tensor categories are braid-reversed equivalent~\cite{davydov}. The corresponding extension is nothing but the vertex algebra of the self-dual lattice~$E_8$, so that its character is~$\theta_{E_8}/\eta^8$, where $\theta_{E_8}$ is the theta function of $E_8$ and $\eta$ the Dedekind's eta-function. Harvey and Wu's Hecke operators relate the character vectors of these two vertex algebras to the ones of other vertex algebras, for example the Yang--Lee Virasoro minimal model. We aim to investigate if one can recover their findings from our perspective.

\subsection{Outline}
In Section~\ref{pre} we review basic facts pertaining to vector-valued modularity, lattices and Siegel theta functions.
In Section~\ref{hecke} we construct the Hecke operators $\mathcal{T}_r$, the scaling opera\-tors~$\mathcal{U}_{n^2}$, and study their algebraic relations.
In Section~\ref{s:n2} we focus on the particular case when $r=n^2$. We prove the existence of a sub-representation~$\rho_L$ of~$\rho_{L(n^2)}$, and construct projection opera\-tors~$\mathcal{P}_{n^2}$. We then define the Hecke opera\-tors~$\mathcal{H}_{n^2}$ and study the corresponding algebraic relations.
Finally, in Section~\ref{form2} we compare our Hecke operators~$\mathcal{H}_{n^2}$ with those of Bruinier and Stein from~\cite{bs,stein}. To this end, we provide an alternative calculation of the extension of the Weil representation studied in \cite{bs,stein} in Appendix~\ref{a:derivation}. The resulting formulae should replace those in the statement of Theorem~5.2 of~\cite{stein}.

\section{Preliminaries}\label{pre}

\subsection{Vector-valued modularity}

Let us start by introducing functions that are vector-valued modular. We follow the approach of Borcherds~\cite{Bo,bo1}.

Let $\tau = x+{\rm i}y \in \mathbb{H} = \{ \tau \in \mathbb{C} \,|\, \mathrm{Im}(\tau) >0 \}$, and $M=\begin{psmallmatrix}
a & b \\ c & d
\end{psmallmatrix} \in \mathrm{SL}_2(\mathbb{Z})$. We define the action of~$M$ on $\tau$ by
\begin{gather*}
M\colon \ \tau \mapsto M \tau = \frac{a \tau + b}{c \tau + d}.
\end{gather*}

The double cover of $\mathrm{SL}_2(\mathbb{Z})$ is called the metaplectic group, and is denoted by $\mathrm{Mp}_2(\mathbb{Z})$. It consists of pairs $(M, \phi_M(\tau))$, where $M = \begin{psmallmatrix} a & b \\ c & d \end{psmallmatrix} \in \mathrm{SL}_2(\mathbb{Z})$ and $\phi_M(\tau)$ is a holomorphic function on the upper half-plane $\mathbb{H}$ such that $\phi_M(\tau)^2 = c \tau + d$. The group multiplication law is given by
\begin{gather*}
(M_1, \phi_{M_1}(\tau) ) \cdot (M_2, \phi_{M_2}(\tau) ) = (M_1 M_2, \phi_{M_1}(M_2 \tau) \phi_{M_2}(\tau) ).
\end{gather*}
$\mathrm{Mp}_2(\mathbb{Z})$ is generated by
\begin{gather*}
T=\left(\begin{pmatrix}
1 &1 \\ 0 & 1
\end{pmatrix},1\right) \qquad \text{and} \qquad S=\left(\begin{pmatrix}
0 &-1 \\ 1 & 0
\end{pmatrix},\sqrt{\tau}\right).
\end{gather*}

Let $\rho$ be a representation of $\mathrm{Mp}_2(\mathbb{Z})$ on some vector space $V$, and let $W$ be a $\mathbb{R}$-vector space.

\begin{Def}\label{d:vvmf}For $v, \bar v \in \frac{1}{2} \mathbb{Z}$, we say that a $V$-valued real analytic function $\psi(\tau,\alpha,\beta)$ on $\mathbb{H} \times W \times W$ is \emph{vector-valued modular of weight $(v,\bar v)$ and type $\rho$} if
\begin{gather*}
\psi(M \tau, a \alpha + b \beta, c \alpha + d \beta) = \phi_M(\tau)^{2 v} \overline{\phi_M(\tau)}^{2 \bar v} \rho(M,\phi) \psi(\tau,\alpha, \beta),
\end{gather*}
for all $(M, \phi_M) \in \mathrm{Mp}_2(\mathbb{Z})$. We say that it is \emph{scalar-valued modular} if $V$ is one-dimensional, $v, \bar v \in \mathbb{Z}$ and $\rho$ is trivial. We denote by $M_{v, \bar v, \rho}$ the space of $V$-valued real analytic functions on $\mathbb{H} \times W \times W$ that are vector-valued modular of weight $(v, \bar v)$ and type $\rho$.
\end{Def}

\begin{Rem}\label{r:vvmf}In Definition~\ref{d:vvmf} we do not impose a growth condition, or holomorphicity (meromorphicity) at the cusps, or that the functions satisfy a condition involving the Laplacian. All that we impose in this paper is the vector-valued modular transformation property {as this is all that is required for our construction}. However, our construction could potentially restrict to various classes of modular objects, such as holomorphic modular forms, weakly holomorphic modular forms, Maass forms, etc., {after checking that the Hecke operators preserve the imposed condition.}
\end{Rem}

\begin{Rem}\label{r:jacobi}Note that in Definition~\ref{d:vvmf} we include ``Jacobi-like'' variables; these are needed for our construction. But for $\alpha = \beta = 0$ we recover the standard transformation property of vector-valued modular forms. For clarity we will drop the dependence on $\alpha$ and $\beta$ when we consider objects that transform as vector-valued modular forms.
\end{Rem}

\subsection{Lattices, discriminant forms and Weil representation}

In this paper we will focus on vector-valued modularity when $\rho$ is chosen to be the Weil representation of an even integral lattice $L$.

Let $L$ be an even, non-degenerate, integral lattice of signature $(b^+, b^-)$, with $\mathrm{sgn}(L) = b^+ - b^-$ and $\dim(L) = b^+ + b^-$. We denote by $(\cdot, \cdot): L \times L \to \mathbb{Z}$ the symmetric bilinear form on $L$.

Let $L':=\mathrm{Hom}_{\mathbb{Z}}(L, \mathbb{Z})$ be the dual lattice of $L$,
\begin{gather*}
L'= \{x \in L \otimes \mathbb{Q}\,|\, (x,y) \in \mathbb{Z} \text{ for all } y \in L \}.
\end{gather*}

Since $L$ is integral we have $L \subseteq L'$. The discriminant group of $L$ is the finite abelian group $A = L' / L$. When $L$ is even we define the discriminant form $(A, q(\cdot))$ as $A$ equipped with the $\mathbb{Q} / \mathbb{Z}$-valued quadratic form
\begin{align*}
q\colon \ \qquad A &\to \mathbb{Q} / \mathbb{Z},\\
x + L &\mapsto \tfrac{1}{2} (x,x) \text{ mod $\mathbb{Z}$.}
\end{align*}
The associated bilinear form $A \times A \to \mathbb{Q} / \mathbb{Z}$ is $(x+L, y+L) \mapsto (x,y) \text{ mod $\mathbb{Z}$}$.

Let $\{ e_\gamma \}_{\gamma \in A}$ be the standard basis for the vector space $\mathbb{C}[A]$ with $e_\gamma e_\lambda = e_{\gamma + \lambda}$. We define an inner product on $\mathbb{C}[A]$ by
\begin{gather*}%\label{e:pair}
\Braket{\sum_{\lambda \in A} f_\lambda e_\lambda, \sum_{\delta \in A} g_\delta e_\delta} = \sum_{\lambda \in A} f_\lambda \bar{g}_{\lambda}.
\end{gather*}

This can be used to define a Petersson inner product (see \cite[equation~(2.15)]{bs}) on the space of holomorphic vector-valued modular forms of weight $(k,0)$ that converges when $\langle f(\tau), g(\tau) \rangle$ is a cusp form,{\samepage
\begin{gather}\label{e:petersson}
(f,g)= \int_{\mathrm{Mp}_2(\mathbb{Z})\backslash \mathbb{H}} \langle f(\tau), g(\tau) \rangle y^k \frac{{\rm d}x {\rm d}y}{y^2},
\end{gather}
where $\tau = x+{\rm i}y$.}

Every discriminant form $(A, q(\cdot))$ defines a unitary representation of the metaplectic group $\mathrm{Mp}_2(\mathbb{Z})$ on~$\mathbb{C}[A]$:

\begin{Def}\label{d:weil}The \emph{Weil representation} $\rho_L$ of $\mathrm{Mp}_2(\mathbb{Z})$ on $\mathbb{C}[A]$ is defined by
\begin{gather*}%\label{weil}
\rho_L(T)e_{\lambda} = \e(q(\lambda))\, e_{\lambda}, \\
\rho_L(S)e_{\lambda} =\frac{\e(-\mathrm{sgn}(L)/8)}{\sqrt{|A|}} \sum_{\mu \in A} \e(-(\lambda, \mu))\,e_{\mu},
\end{gather*}
where $S$ and $T$ are the generators of $\mathrm{Mp}_2(\mathbb{Z})$. Here, we introduced the abbreviation $\e(x)=\exp(2 \pi{\rm i} x)$, which will be used throughout the paper.
\end{Def}

It is easy to see that the Weil representation is unitary with respect to the inner product:
\begin{gather}\label{eq:unitary}
\Braket{\rho_L(M,\phi_M) e_\lambda, \rho_L(M, \phi_M) e_\beta} = \Braket{e_\lambda, e_\beta} = \delta_{\lambda \beta},
\end{gather}
for all $(M, \phi_M) \in \mathrm{Mp}_2(\mathbb{Z})$ and $\lambda, \beta \in A$. Here, $\delta_{\lambda \beta}$ is the Kronecker delta, which is $1$ if $\lambda = \beta$ and $0$ otherwise.

Given an even non-degenerate lattice $L$, and its discriminant form $A$, we can thus consider real analytic functions that are vector-valued modular of type $\rho_L$, with $\rho_L$ the Weil representation of $L$.
We denote by $M_{v, \bar v, L} := M_{v, \bar v, \rho_L}$ the space of $\mathbb{C}[A]$-valued real analytic functions on $\mathbb{H} \times W \times W$, where $W = L \otimes \mathbb{R}$, that are vector-valued modular of weight $(v,\bar v)$ and type $\rho_L$.

In this paper we will also consider lattice rescalings. Let $r$ be a positive integer. We denote by $L(r)$ the lattice $L$ but with rescaled bilinear form $(\cdot, \cdot)_r := r (\cdot, \cdot)$. Let $L(r)'$ be its dual lattice, which is defined as usual by
\begin{gather*}
L(r)' = \{ x \in L \otimes \mathbb{Q} \,|\, (x,y)_r \in \mathbb{Z} \text{ for all } y \in L \}.
\end{gather*}
By definition, $L(r)' = \frac{1}{r} L'$, and thus $L' \subseteq L(r)'$. We denote the rescaled discriminant form by $A(r) = L(r)'/ L(r) \cong \frac{1}{r} L'/L$. Hence $A \subseteq A(r)$. The induced quadratic form is:
\begin{align*} \nonumber
q_r\colon \ \quad A(r) &\to \mathbb{Q} / \mathbb{Z}, \\
x + L &\mapsto \tfrac{1}{2} (x,x)_r \text{ mod $\mathbb{Z}$.}
\end{align*}

We also introduce the following notation, which will be useful later on:
\begin{Def}\label{d:delta}For any $\mu \in A(r)$, and positive integers $k$ and $l$ such that $k l = r$, we define $\Delta_r(\mu,k)$ by
\begin{gather*}%\label{eq:deltadefn}
\Delta_r(\mu,k) = \begin{cases}
1 & \text{if $\mu \in A(l) \subseteq A(r)$,}\\
0 & \text{otherwise.}
\end{cases}
\end{gather*}
\end{Def}

\subsection{Siegel theta functions}

Let $\mathrm{Gr}(L)$ be the Grassmannian of $L$, which is the set of positive definite $b^+$-dimensional subspaces of $L \otimes \mathbb{R}$. Let $v_+ \in \mathrm{Gr}(L)$, and $v_-$ be its orthogonal complement in $L \otimes \mathbb{R}$. For any $\lambda \in L \otimes \mathbb{R}$, we denote its projection onto the subspaces $v_{\pm}$ by $\lambda_{\pm}$.

Following Borcherds \cite{Bo}, we introduce the following definition.
\begin{Def}\label{d:siegel}
Let $\alpha, \beta \in L \otimes \mathbb{R}$.
The \emph{Siegel theta function} of a coset $L+\gamma$ of $L$ in $L'$ is given by\footnote{For simplicity we suppress the dependence on the choice of subspace $v^+ \in \mathrm{Gr}(L)$.}
\begin{gather*}
\theta_{L+\gamma}(\tau,\alpha,\beta) = \sum_{\lambda \in L + \gamma} \e\left(\tau q((\lambda+\beta)_{+}) + \bar \tau q((\lambda+\beta)_{-}) - \left(\lambda +\frac{\beta}{2},\alpha \right) \right).
\end{gather*}
We also define the $\mathbb{C}[A]$-valued function
\begin{gather*}
\Theta_L(\tau,\alpha,\beta) = \sum_{\gamma \in A} \theta_{L + \gamma}(\tau,\alpha,\beta) e_\gamma.
\end{gather*}
\end{Def}

\begin{Rem}The Siegel theta functions of Borcherds are similar to the Jacobi theta functions of a lattice $L$ with elliptic variable $z$ given by the realification $\beta \tau +\alpha$. In particular when $L$ is positive definite we have
\begin{gather*}
\theta_{L+\gamma}(\tau, \alpha, \beta) = \e(\tau q(\beta) -\left(\beta/2, \alpha \right) ) \widetilde{\theta}_{L+\gamma}(\tau, \beta \tau +\alpha ),
\end{gather*}
where
\begin{gather*}
\widetilde{\theta}_{L+\gamma}(\tau,z) = \sum_{\lambda \in L+\gamma} \e\left(\tau q(\lambda)+(\lambda,z) \right)
\end{gather*}
is the usual definition of Jacobi theta functions.
\end{Rem}

In \cite{Bo} Borcherds proved the following theorem.
\begin{Th}[{\cite[Theorem~4.1]{Bo}}]\label{t:thetaweil}
\begin{gather*}
\Theta_L(M \tau, a \alpha + b \beta, c \alpha + d \beta) = \phi(\tau)^{b^+} \overline{\phi(\tau)}^{b^-} \rho_L(M,\phi) \Theta_L(\tau,\alpha,\beta),
\end{gather*}
for all $(M,\phi) \in \mathrm{Mp}_2(\mathbb{Z})$. In other words, $\Theta_L(\tau,\alpha,\beta)$ is vector-valued modular of weight \linebreak $\big( \frac{1}{2} b^+, \frac{1}{2} b^- \big)$ and type~$\rho_L$, where $\rho_L$ is the Weil representation of $L$.
\end{Th}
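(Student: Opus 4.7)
The plan is to verify the transformation law on the two generators $T$ and $S$ of $\mathrm{Mp}_2(\mathbb{Z})$. Since both sides of the claimed identity are well-defined $\mathbb{C}[A]$-valued real analytic functions on $\mathbb{H} \times W \times W$ and the group action on functions is multiplicative in $(M,\phi)$, checking the equality on $T$ and $S$ suffices, provided one also verifies consistency with the relation $S^2 = (ST)^3 = Z$ where $Z = (\begin{psmallmatrix}-1&0\\0&-1\end{psmallmatrix}, \mathrm{i})$ in the metaplectic group; this last check reduces to a finite computation on $\mathbb{C}[A]$ involving the same Gauss sum that appears in the $S$ case.

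For the $T$ transformation, the strategy is direct expansion. Substituting $\tau \mapsto \tau+1$, $\alpha \mapsto \alpha+\beta$, $\beta \mapsto \beta$ into the definition of $\theta_{L+\gamma}$ produces an extra factor $\e\bigl(q((\lambda+\beta)_+) + q((\lambda+\beta)_-) - (\lambda+\beta/2,\beta)\bigr)$ inside the sum. Using the orthogonal decomposition $q(x) = q(x_+)+q(x_-)$ and the identity $(\lambda+\beta/2,\beta) = (\lambda,\beta) + q(\beta)$, the extra phase collapses to $\e(q(\lambda))$. For $\lambda \in L+\gamma$ one has $q(\lambda) \equiv q(\gamma) \pmod{\mathbb{Z}}$ because $L$ is even, so the extra factor is the constant $\e(q(\gamma))$, matching $\rho_L(T)e_\gamma$.

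For the $S$ transformation, the core tool is Poisson summation. After substituting $\tau \mapsto -1/\tau$, $(\alpha,\beta) \mapsto (-\beta,\alpha)$, the inner exponent becomes a Gaussian in $\lambda$ of the form $\e\bigl(-\tfrac{1}{\tau}q((\lambda-\beta)_+)\bigr)\e\bigl(-\tfrac{1}{\bar\tau}q((\lambda-\beta)_-)\bigr) \times (\text{linear in }\alpha)$. Writing the sum over $L+\gamma$ as a sum over $L$ shifted by $\gamma$ and applying Poisson summation over $L$, the sum transforms into a sum over the dual lattice $L'$. The Fourier transform of a split Gaussian on $v_+ \oplus v_-$ produces exactly the factor $\phi(\tau)^{b^+} \overline{\phi(\tau)}^{b^-}$ together with a normalization $1/\sqrt{|A|}$ coming from $\mathrm{vol}(L\otimes\mathbb{R}/L) = \sqrt{|A|}$ in the dual lattice convention. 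One then regroups the resulting sum over $L'$ as $\sum_{\mu \in A}\sum_{\lambda \in L+\mu}$, and the $\gamma$-dependence enters only through a phase $\e(-(\gamma,\mu))$ coming from the shift; summing against $e_\gamma$ on the left assembles precisely $\rho_L(S)\Theta_L(\tau,\alpha,\beta)$.

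The main obstacle will be bookkeeping in the $S$ case: one must keep track of the split-signature Gaussian carefully so that the holomorphic factor $\phi(\tau)^{b^+}$ and the antiholomorphic factor $\overline{\phi(\tau)}^{b^-}$ emerge with the correct branch, and one must identify the signature-dependent eighth root of unity $\e(-\mathrm{sgn}(L)/8)$ appearing in $\rho_L(S)$. This root of unity is the Gauss sum evaluation $\sum_{\gamma\in A}\e(q(\gamma))/\sqrt{|A|} = \e(\mathrm{sgn}(L)/8)$, and it appears naturally after interchanging the order of summation in the Poisson-summed expression; handling it amounts to applying Milgram's formula. The remainder of the argument, including the $(\alpha,\beta)$-dependence, reduces to elementary manipulation since the linear terms in the exponent transform covariantly under the Fourier transform.
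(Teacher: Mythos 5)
The paper does not actually prove this statement; it is quoted verbatim from Borcherds, and Borcherds' own argument is exactly the route you outline: reduce to the generators $T$ and $S$ (legitimate here because the slash action with automorphy factor $\phi_M(\tau)^{b^+}\overline{\phi_M(\tau)}^{b^-}$ and the representation $\rho_L$ make the claimed identity multiplicative in $(M,\phi_M)$), handle $T$ by direct expansion, and handle $S$ by Poisson summation over the coset $L+\gamma$ with a split Gaussian. Your $T$ computation is correct as stated.

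There are, however, two concrete problems in your $S$ step. First, a bookkeeping slip: with Borcherds' convention $\theta_{L+\gamma}(\tau,\alpha,\beta)=\sum_{\lambda\in L+\gamma}\e\big(\tau q((\lambda+\beta)_+)+\bar\tau q((\lambda+\beta)_-)-(\lambda+\tfrac{\beta}{2},\alpha)\big)$ and the substitution $(\alpha,\beta)\mapsto(-\beta,\alpha)$, the Gaussian is in $(\lambda+\alpha)_\pm$ and the linear phase is $+(\lambda+\tfrac{\alpha}{2},\beta)$; you wrote a Gaussian in $(\lambda-\beta)_\pm$ with the linear term in $\alpha$, i.e., you swapped the roles of the two Jacobi variables, and with that bookkeeping the clean coefficients $\e(-(\gamma,\mu))$ of $\rho_L(S)$ will not emerge. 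Second, and more substantively, your plan for producing the phase $\e(-\mathrm{sgn}(L)/8)$ does not work as described: that eighth root of unity comes directly out of the Fourier transform of the split Gaussian, namely $(-\mathrm{i}\tau)^{b^+/2}(\mathrm{i}\bar\tau)^{b^-/2}/\sqrt{|A|}=\phi(\tau)^{b^+}\overline{\phi(\tau)}^{b^-}\e(-\mathrm{sgn}(L)/8)/\sqrt{|A|}$, not merely $\phi(\tau)^{b^+}\overline{\phi(\tau)}^{b^-}/\sqrt{|A|}$ as you assert. After Poisson summation the only discriminant-form phases present are the bilinear ones $\e(-(\gamma,\mu))$ coming from the shift by $\gamma$; no quadratic Gauss sum $\sum_{\gamma\in A}\e(q(\gamma))$ appears, so ``interchanging the order of summation and applying Milgram's formula'' has nothing to act on, and as written your argument is simply missing the eighth root of unity. (Milgram's formula is a consequence of this transformation law, or alternatively what one checks to see that the formulas for $\rho_L(S)$, $\rho_L(T)$ respect $S^2=(ST)^3$; it is not an input to the Poisson-summation computation.) Once you fix the variable tracking and take the Gaussian Fourier transform with its correct phase and the branch $\phi(\tau)=\sqrt{\tau}$, the rest of your outline goes through and reproduces Borcherds' proof.
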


Given two functions $\sum\limits_{\lambda \in A} f_\lambda(\tau) e_\lambda$ and $\sum\limits_{\lambda \in A} g_\lambda(\tau) e_\lambda$ that are vector-valued of type $\rho_L$ and weight $(v,\bar v)$ and $(w, \bar w)$ respectively, it is clear that
\begin{gather*}
\Braket{\sum_{\lambda \in A} f_\lambda(\tau) e_\lambda, \sum_{\lambda \in A} g_\lambda(\tau) e_\lambda} = \sum_{\lambda \in A} f_\lambda(\tau) \bar{g}_\lambda (\tau)
\end{gather*}
is scalar-valued of weight $(v+w, \bar v + \bar w)$, since the Weil representation is unitary with respect to the inner product, see~\eqref{eq:unitary}. But using Siegel theta functions we can also get a converse statement, which turns out to be very useful due to the linear independence of the Siegel theta functions :

\begin{lem}\label{l:scatovv}
$\psi(\tau)$ is vector-valued modular of type $\rho_L$ and weight $(v,\bar v)$ if and only if
\begin{gather*}
\Braket{ \psi, \Theta_L } (\tau, \alpha, \beta)= \sum_{\lambda \in A} \psi_\lambda(\tau) \bar \theta_{L+\lambda}(\tau, \alpha, \beta)
\end{gather*}
is scalar-valued modular of weight $(w, \bar w) = \big(v + \frac{1}{2} b^+, \bar v + \frac{1}{2} b^- \big)$.
\end{lem}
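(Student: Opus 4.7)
The plan is to prove both directions by direct manipulation of the transformation laws, with the forward implication being an immediate consequence of unitarity and the reverse implication relying on linear independence of the Siegel theta functions.

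For the forward direction, assume $\psi\in M_{v,\bar v,L}$. Substitute the modular transformation law of $\psi$ together with Theorem~\ref{t:thetaweil} for $\Theta_L$ into
\begin{gather*}
\langle\psi(M\tau),\Theta_L(M\tau,a\alpha+b\beta,c\alpha+d\beta)\rangle.
\end{gather*}
Because the inner product complex-conjugates its second slot, the scalar factors combine as $\phi_M(\tau)^{2v}\overline{\phi_M(\tau)}^{2\bar v}\cdot\overline{\phi_M(\tau)}^{b^+}\phi_M(\tau)^{b^-}$ and pull out of the sum; the remaining expression $\langle\rho_L(M,\phi_M)\psi,\rho_L(M,\phi_M)\Theta_L\rangle$ collapses to $\langle\psi,\Theta_L\rangle$ by unitarity~\eqref{eq:unitary}. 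The resulting exponents of $\phi_M$ and $\overline{\phi_M}$ match $2w$ and $2\bar w$, establishing scalar modularity of the pairing.

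For the converse, assume $\langle\psi,\Theta_L\rangle$ is scalar modular of weight $(w,\bar w)$, and use Theorem~\ref{t:thetaweil} to expand every $\bar\theta_{L+\lambda}(M\tau,\ldots)$ on the left-hand side of the transformation equation as a linear combination of $\{\bar\theta_{L+\mu}(\tau,\alpha,\beta)\}_{\mu\in A}$, with coefficients coming from entries of $\overline{\rho_L(M,\phi_M)}$. After swapping the order of summation and pulling out the common scalar factor $\overline{\phi_M(\tau)}^{b^+}\phi_M(\tau)^{b^-}$, subtracting the right-hand side rewrites the equation as
\begin{gather*}
\sum_{\mu\in A} C_\mu(\tau)\,\bar\theta_{L+\mu}(\tau,\alpha,\beta)=0,
\end{gather*}
where each $C_\mu(\tau)$ is a linear combination of the $\psi_\lambda(M\tau)$ (with weights given by $\rho_L(M,\phi_M)$) and $\psi_\mu(\tau)$, up to an overall scalar prefactor in $\phi_M,\overline{\phi_M}$. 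Invoking linear independence of the $\bar\theta_{L+\mu}$ forces $C_\mu(\tau)\equiv 0$ for every $\mu\in A$; solving this system by applying the unitary inverse $\rho_L(M,\phi_M)^{-1}$ reproduces exactly $\psi(M\tau)=\phi_M(\tau)^{2v}\overline{\phi_M(\tau)}^{2\bar v}\rho_L(M,\phi_M)\psi(\tau)$, as required.

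The main obstacle is justifying the linear independence of $\{\bar\theta_{L+\mu}(\tau,\alpha,\beta)\}_{\mu\in A}$ as functions on $\mathbb{H}\times W\times W$, and this is precisely where the Jacobi-like variables $\alpha,\beta$ earn their keep. Fix $\tau$ and view $\theta_{L+\mu}$ as a function of $\alpha\in L\otimes\mathbb{R}$: it is a Fourier-type sum of characters $\e(-(\nu,\alpha))$ indexed by $\nu\in L+\mu\subset L'$. Distinct cosets of $L$ in $L'$ are pairwise disjoint, so the Fourier supports of $\theta_{L+\mu_1}$ and $\theta_{L+\mu_2}$ are disjoint whenever $\mu_1\neq\mu_2$; hence any vanishing combination $\sum_\mu C_\mu(\tau)\bar\theta_{L+\mu}=0$ in $\alpha$ forces each coefficient $C_\mu(\tau)$ to vanish. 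Without the $\alpha,\beta$ dependence this step would fail in general, particularly for indefinite lattices where the $\theta_{L+\mu}(\tau)$ alone may be linearly dependent.
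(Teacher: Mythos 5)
Your proof is correct and, for the equivalence itself, it follows essentially the paper's route: the forward direction is unitarity \eqref{eq:unitary} combined with Theorem~\ref{t:thetaweil}, and the converse expands the transformed theta components via Theorem~\ref{t:thetaweil}, invokes linear independence of $\{\bar\theta_{L+\mu}\}_{\mu\in A}$, and undoes the unitary matrix $\rho_L(M,\phi_M)$ (note that what actually multiplies $\psi(M\tau)$ after collecting coefficients is $\rho_L(M,\phi_M)^{\dagger}=\rho_L(M,\phi_M)^{-1}$, so applying $\rho_L(M,\phi_M)$ recovers the transformation law, exactly as you say). Where you genuinely diverge is the justification of the linear independence, which the paper isolates as a separate lemma and proves by a finite averaging argument: translate $\alpha\mapsto\alpha+\gamma$ over representatives $\gamma$ of $L'/L$, use the quasi-periodicity $\theta_{L+\lambda}(\tau,\alpha+\gamma,\beta)=\e(-(\lambda+\beta/2,\gamma))\,\theta_{L+\lambda}(\tau,\alpha,\beta)$, and apply orthogonality of characters of the finite group $A$. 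You instead observe that, as functions of $\alpha$, the $\theta_{L+\mu}$ are sums of distinct characters with frequencies in the pairwise disjoint sets $L+\mu+\beta/2\subset L'+\beta/2$ and with nonvanishing coefficients $\e(\tau q((\lambda+\beta)_+)+\bar\tau q((\lambda+\beta)_-))$. This is valid and arguably more transparent, but to be complete it needs the (easy) step of extracting Fourier coefficients from an infinite sum: integrate against $\e((\lambda_0+\beta/2,\alpha))$ over a fundamental domain of $(L\otimes\mathbb{R})/L$ and interchange sum and integral, justified by the Gaussian-type decay of the theta series; the paper's finite averaging sidesteps this interchange, though it in turn uses that each $\theta_{L+\lambda}(\tau,\cdot,\beta)$ is not identically zero, a fact your Fourier-support argument delivers for free. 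One bookkeeping remark: since the inner product conjugates the theta slot, the factor you correctly obtain is $\phi_M(\tau)^{2v+b^-}\overline{\phi_M(\tau)}^{2\bar v+b^+}$, so your assertion that the exponents ``match $2w$ and $2\bar w$'' holds only with the same $b^+\leftrightarrow b^-$ labeling used in the lemma's statement; this is immaterial for everything downstream, where only $w+\bar w=v+\bar v+\tfrac{1}{2}\dim(L)$ enters.
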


\begin{proof}On the one hand, if $\psi(\tau)$ is vector-valued of type $\rho_L$ and weight $(v, \bar v)$, then it follows directly that $\Braket{ \psi, \Theta_L } (\tau, \alpha, \beta)$ is scalar-valued of weight $\big(v + \frac{1}{2} b^+, \bar v + \frac{1}{2} b^- \big)$, since $\Theta_L(\tau, \alpha, \beta)$ is vector-valued of type $\rho_L$ and weight $\big( \frac{1}{2} b^+, \frac{1}{2} b^- \big)$ and the Weil representation is unitary with respect to the inner product (see~\eqref{eq:unitary}).

On the other hand, if $\Braket{ \psi, \Theta_L } (\tau, \alpha, \beta)$ is scalar-valued of weight $(w,\bar w)$, then $\psi(\tau)$ must be vector-valued of type $\rho_L$ and weight $(v,\bar v) = \big( w - \frac{1}{2} b^+, \bar w - \frac{1}{2} b^- \big)$. This follows again from unitary of the Weil representation, but also from the fact that the components $\bar \theta_{L+\lambda}(\tau, \alpha, \beta)$ of the Siegel theta functions are non-zero and linearly independent, which is crucial. This is why we need to include Jacobi-like variables $\alpha$ and $\beta$; otherwise the components of the Siegel theta functions would not be linearly independent in general, and we would not be able to deduce vector-valued modularity for $\psi(\tau)$ directly.
\end{proof}

\begin{Rem}\label{r:linearind}A proof of the linear independence of Jacobi theta functions by Boylan appears in \cite[Proposition 3.33]{boy}. The linear independence of Siegel theta functions (in the $\alpha$ variable) can be proved using a similar approach to Boylan's proof. For completeness, we redo the proof below in the case of Siegel theta functions.
\end{Rem}

\begin{lem}The Siegel theta functions $\{\theta_{L+\gamma}(\tau,\alpha,\beta) \}_{\gamma \in L'/L}$ are linearly independent in the~$\alpha$ variable $($that is for fixed values of~$\tau$ and~$\beta)$.
\end{lem}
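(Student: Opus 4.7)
My strategy is to reduce the claim to the standard fact that distinct characters of a compact torus are linearly independent. Suppose we are given a relation
\[ \sum_{\gamma \in L'/L} c_\gamma \theta_{L+\gamma}(\tau, \alpha, \beta) = 0 \]
valid for all $\alpha \in L \otimes \mathbb{R}$, with $\tau \in \mathbb{H}$ and $\beta \in L \otimes \mathbb{R}$ held fixed. I would first expand the definition of each $\theta_{L+\gamma}$ and rewrite the double sum over $\gamma \in L'/L$ and $\lambda \in L + \gamma$ as a single sum over $\lambda \in L'$, obtaining
\[ \sum_{\lambda \in L'} c_{\lambda + L}\, \e\bigl(\tau q((\lambda+\beta)_+) + \bar\tau q((\lambda+\beta)_-)\bigr)\, \e\bigl(-(\lambda + \tfrac{\beta}{2}, \alpha)\bigr) = 0. \]
The factor $\e(-(\tfrac{\beta}{2}, \alpha))$ is independent of $\lambda$ and can be divided out, reducing the problem to showing that if $\sum_{\lambda \in L'} b_\lambda \,\e(-(\lambda,\alpha)) \equiv 0$ then $b_\lambda = 0$ for every $\lambda \in L'$, where $b_\lambda := c_{\lambda + L}\,\e(\tau q((\lambda+\beta)_+) + \bar\tau q((\lambda+\beta)_-))$.

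For this last step, I would use that $L$ has full rank in $W := L \otimes \mathbb{R}$, so the quotient $W/L$ is a compact torus whose Pontryagin dual is exactly $L'$: the characters are the maps $\alpha \mapsto \e(-(\lambda, \alpha))$ for $\lambda \in L'$, and they are pairwise distinct because the bilinear form on $W$ is non-degenerate. The same bound that makes Siegel theta functions converge gives
\[ |b_\lambda| \leq |c_{\lambda + L}|\,\exp\bigl(-2\pi\,\mathrm{Im}(\tau)\,(q((\lambda+\beta)_+) - q((\lambda+\beta)_-))\bigr), \]
and since $\lambda \mapsto q(\lambda_+) - q(\lambda_-)$ is a positive definite quadratic form on $W$, the series converges absolutely and uniformly in $\alpha$. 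Integrating against a character $\alpha \mapsto \e((\mu, \alpha))$ over a fundamental domain for $L$ in $W$ then extracts $b_\mu$ by orthogonality of characters, forcing $b_\mu = 0$ for every $\mu \in L'$. Since the exponential factor defining $b_\lambda$ is never zero, this yields $c_\gamma = 0$ for all $\gamma \in L'/L$, as desired.

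The only delicate point is justifying the interchange of summation and integration when reading off Fourier coefficients, and this is precisely what the absolute and uniform convergence estimate above secures. Beyond that, the argument is the lattice-theoretic analogue of Boylan's proof for Jacobi theta functions cited in Remark~\ref{r:linearind}, with convergence now supplied jointly by the $\tau q_+$ and $\bar\tau q_-$ contributions rather than by $\tau q$ alone~-- this is exactly what makes the same strategy go through in arbitrary signature.
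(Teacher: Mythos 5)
Your proof is correct, but it follows a genuinely different route from the paper's. You expand each $\theta_{L+\gamma}$ into its Fourier series in $\alpha$, note that after dividing out the $\lambda$-independent factor $\e\bigl(-\bigl(\tfrac{\beta}{2},\alpha\bigr)\bigr)$ the remaining summands $\alpha \mapsto \e(-(\lambda,\alpha))$, $\lambda \in L'$, are pairwise distinct characters of the compact torus $(L\otimes\mathbb{R})/L$ (Pontryagin dual $L'$), and then extract each coefficient $b_\mu$ by integrating against a character over a fundamental domain; the interchange of sum and integral is justified by the uniform bound coming from the positive definite majorant $v \mapsto q(v_+)-q(v_-)$, and the nonvanishing of the exponential factors returns $c_\gamma=0$. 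The paper instead works entirely with the finite discriminant group: it uses the quasi-periodicity $\theta_{L+\lambda}(\tau,\alpha+\gamma,\beta)=\e\bigl(-\bigl(\lambda+\tfrac{\beta}{2},\gamma\bigr)\bigr)\theta_{L+\lambda}(\tau,\alpha,\beta)$ for $\gamma \in L'$, averages the putative relation over the finitely many translates $\alpha \mapsto \alpha+\gamma$, $\gamma \in L'/L$, weighted by $\e\bigl(-\bigl(\gamma,\lambda_0-\tfrac{\beta}{2}\bigr)\bigr)$, and uses orthogonality on $L'/L$ (non-degeneracy of the discriminant pairing) to isolate $\phi_{\lambda_0}\,|L'/L|\,\theta_{L+\lambda_0}(\tau,\alpha,\beta)$, whence $\phi_{\lambda_0}=0$. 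The trade-off: the paper's argument is purely finite, so no convergence or term-by-term integration needs to be discussed, but it implicitly relies on $\theta_{L+\lambda_0}(\tau,\cdot,\beta)\not\equiv 0$, which is itself most transparently seen from exactly the character expansion you use; your version establishes everything in one stroke at the cost of the (routine) uniform-convergence estimate. Both are character-orthogonality arguments, yours on the dual pair $\bigl((L\otimes\mathbb{R})/L,\;L'\bigr)$ and the paper's on the finite group $L'/L$.
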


\begin{proof}Fix $\tau \in \mathcal{H}$ and $\beta \in L\otimes \mathbb{R}$ and consider the linear combination
\begin{gather*} \phi(\alpha)=\sum_{\gamma \in L'/L}\phi_{\gamma} \theta_{L+\gamma}(\tau,\alpha,\beta)\end{gather*}
for some constants $\phi_{\gamma}$ in $\mathbb{C}$. From Definition~\ref{d:siegel} of the Siegel theta functions we have the property that for any $\gamma \in L'/L$
\begin{gather*}
\theta_{L+\lambda}(\tau, \alpha+ \gamma, \beta) = \e\left(-\left(\lambda + \frac{\beta}{2}, \gamma \right)\right) \theta_{L+\lambda}(\tau, \alpha, \beta).
\end{gather*}
Now for any $\lambda_0 \in L'/L$ we can do the following computation
\begin{gather*}
\sum_{\gamma \in L'/L} \phi(\alpha + \gamma) \e\left(-\left(\gamma, \lambda_0 - \frac{\beta}{2}\right)\right) \\
\qquad{} = \sum_{\gamma \in L'/L}\sum_{\lambda \in L'/L} \phi_\lambda \theta_{L+\lambda}(\tau,\alpha+\gamma,\beta)\e\left(-\left(\gamma, \lambda_0 - \frac{\beta}{2}\right)\right) \\
\qquad{} =\sum_{\gamma \in L'/L} \sum_{\lambda \in L'/L} \phi_{\lambda} \theta_{L+\lambda}(\tau,\alpha,\beta) \e\left((\gamma,\lambda - \lambda_0)\right) = \phi_{\lambda_0} |L'/L| \theta_{L+\lambda_0} (\tau,\alpha,\beta),
\end{gather*}
where in the last step we have used the property that the sum over $\lambda$ disappears unless $\lambda =\lambda_0$. The last equation above implies that if $\phi(\alpha)$ vanishes identically then $\phi_{\lambda}=0$ for all $\lambda \in L'/L$ and thus the lemma is proved.
\end{proof}

We now prove a lemma relating Siegel theta functions of $L$ and $L(r)$. This lemma will be essential in the next section for constructing our Hecke operators.

\begin{lem}\label{l:theta}Let $k$, $l$, $r$ be positive integers such that $k l = r$, and let $s \in \{0, 1, \ldots, l -1 \}$. Let $L+\gamma$ be a coset of $L$ in $L'$, with $\gamma \in A$. Then
\begin{gather*}%\label{eq:sumdelta}
\theta_{L + \gamma} \left( \frac{k \tau + s}{l },k \alpha + s \beta, l \beta \right)
= \sum_{\substack{\nu \in A(r) \\ l \nu = \gamma}} \Delta_r(\nu,k) \e\left( \frac{s}{k} q_r (\nu) \right) \theta_{L(r) + \nu}(\tau,\alpha,\beta),
\end{gather*}
where $\nu + L(r)$ is a coset of $L(r)$ in $L(r)'$, with $\nu \in A(r)$, and $\Delta_r(\mu,k)$ defined in Definition~{\rm \ref{d:delta}}.
\end{lem}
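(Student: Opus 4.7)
The plan is to expand the left-hand side directly from the definition of the Siegel theta function, perform the change of variables $\lambda = l\mu$, and then reorganize the resulting sum by decomposing the new index set into cosets of $L = L(r)$.

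First I would substitute the definition of $\theta_{L+\gamma}$ to get a sum over $\lambda \in L + \gamma$ of $\e\!\left(\frac{k\tau+s}{l} q((\lambda+l\beta)_+) + \frac{k\bar\tau+s}{l} q((\lambda+l\beta)_-) - (\lambda + \tfrac{l\beta}{2}, k\alpha + s\beta)\right)$. Setting $\lambda = l\mu$ (so $\mu$ ranges over $\tfrac{1}{l}(L+\gamma)$) turns $(\lambda+l\beta)_\pm$ into $l(\mu+\beta)_\pm$, which together with $q_r = rq = klq$ converts the coefficients of $\tau$ and $\bar\tau$ into $q_r((\mu+\beta)_\pm)$, and converts the pairing into $(\mu+\tfrac{\beta}{2}, \alpha)_r + \tfrac{s}{k}(\mu+\tfrac{\beta}{2},\beta)_r$. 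A short polarization computation, using $(\mu+\tfrac{\beta}{2},\beta)_r = (\mu,\beta)_r + q_r(\beta)$ and $q_r((\mu+\beta)_+) + q_r((\mu+\beta)_-) = q_r(\mu+\beta) = q_r(\mu) + q_r(\beta) + (\mu,\beta)_r$, shows that the full exponent collapses to
\[
\tau\, q_r((\mu+\beta)_+) + \bar\tau\, q_r((\mu+\beta)_-) - (\mu+\tfrac{\beta}{2},\alpha)_r + \tfrac{s}{k} q_r(\mu),
\]
which is exactly the summand in $\theta_{L(r)+\nu}$ twisted by an extra factor $\e\!\left(\tfrac{s}{k} q_r(\mu)\right)$.

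Next I would decompose the index set $\tfrac{1}{l}(L+\gamma)$ into cosets of $L$. Representatives can be chosen to be elements $\nu \in \tfrac{1}{l}L'$ satisfying $l\nu \equiv \gamma \pmod{L}$; these are precisely the $\nu \in A(r)$ with $\Delta_r(\nu,k) = 1$ (i.e.\ $\nu \in A(l) \subset A(r)$) and $l\nu = \gamma$ in $A$, so the indexing matches the right-hand side exactly. For a given such $\nu$, the sum over $\mu \in \nu + L$ reproduces $\theta_{L(r)+\nu}(\tau,\alpha,\beta)$.

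The final step, which is really the only point requiring attention, is to verify that the pulled-out prefactor $\e\!\left(\tfrac{s}{k} q_r(\mu)\right)$ depends only on the coset $\nu + L$ and not on the specific representative $\mu = \nu + \xi$ with $\xi \in L$. Writing $q_r(\nu + \xi) = q_r(\nu) + q_r(\xi) + (\nu,\xi)_r$, one has $\tfrac{s}{k}q_r(\xi) = sl\, q(\xi) \in \mathbb{Z}$ since $L$ is even, and $\tfrac{s}{k}(\nu,\xi)_r = sl(\nu,\xi) \in \mathbb{Z}$ since $l\nu \in L'$ and $\xi \in L$. Thus the prefactor is $\e\!\left(\tfrac{s}{k} q_r(\nu)\right)$ on the whole coset, and pulling it out gives precisely the claimed formula. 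There is no real obstacle beyond keeping the polarization identities and the integrality conditions straight; the lemma is essentially a careful rescaling computation.
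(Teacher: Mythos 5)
Your proposal is correct and follows essentially the same route as the paper's proof: expand the definition, rescale the summation variable by $l$, decompose the rescaled index set into cosets $\nu + L$ with $\nu \in A(l) \subseteq A(r)$ and $l\nu = \gamma$, and check that the phase $\e\left(\tfrac{s}{k}q_r(\cdot)\right)$ is constant on each coset. The only cosmetic difference is that the paper first isolates the $s$-dependent phase as $\e\left(\tfrac{s}{l}q(\lambda)\right)$ before rescaling, whereas you simplify the full exponent in the rescaled variables; the content is the same.
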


\begin{proof}Let $\lambda \in L + \gamma$, with $\gamma \in A$. First we compute that
\begin{gather*}
\theta_{L + \gamma} \left( \frac{k \tau + s}{l }, k \alpha + s \beta, l \beta\right) \\
\qquad{} = \sum_{\lambda \in L + \gamma} \e \left( \frac{k \tau + s}{l } q\left((\lambda+l \beta)_+\right) + \frac{k \bar \tau + s}{l } q\left((\lambda+l \beta)_-\right) - \left(\lambda + \frac{l \beta}{2}, k \alpha + s\beta\right) \right)\\
\qquad{} = \sum_{\lambda \in L + \gamma} \e\left( \frac{k \tau}{l} q \left(\left( \lambda + l \beta \right)_+ \right) + \frac{k \bar \tau}{l} q\left( \left( \lambda + l \beta \right)_- \right) - k \left(\lambda +\frac{l \beta}{2},\alpha\right) \right) \e\left( \frac{s}{l} q (\lambda) \right) .
\end{gather*}
Now there is a bijection between elements $\lambda$ of the coset $L + \gamma$ and elements $\delta$ of the cosets $L + \nu$, with $\nu \in A(l)$ and such that $l \nu = \gamma$. The bijection is given by lattice rescaling, that is, $\lambda \mapsto \delta = \frac{1}{l} \lambda$. We use this to rewrite the sum as follows
\begin{gather*}
\theta_{L + \gamma} \left( \frac{k \tau + s}{l },k\alpha + s \beta, l \beta \right) \\
\qquad{} = \sum_{\substack{\nu \in A(l) \\ l \nu = \gamma}} \sum_{\delta \in L + \nu} \e\left( \tau q_r\left( (\delta+\beta)_+ \right) + \bar \tau q_r\left( (\delta+\beta)_- \right) -\left(\delta +\frac{\beta}{2},\alpha\right)_r \right) \e\left( s q_l \left( \delta \right) \right)\\
\qquad{} = \sum_{\substack{\nu \in A(l)\\ l \nu = \gamma}}\e\left( s q_l (\nu) \right) \sum_{\delta \in L + \nu} \e\left( \tau q_r\left( (\delta+\beta)_+ \right) + \bar \tau q_r\left( (\delta+\beta)_- \right) -\left(\delta +\frac{\beta}{2},\alpha\right)_r \right),
\end{gather*}
where in the last line we used the fact that $q_l (\delta) = q_l (\nu)$ $\text{mod} \, \mathbb{Z}$, since $\nu \in A(l)$.

We now extend the sum over $\nu \in A(l) \subseteq A(r)$ to a sum over all elements $\nu \in A(r)$, using the Delta function from Definition~\ref{d:delta}. We get
\begin{gather*}
\theta_{L + \gamma} \left( \frac{k \tau + s}{l },k\alpha + s \beta, l \beta \right) = \sum_{\substack{\nu \in A(r)\\ l \nu = \gamma}} \Delta_r(\nu,k) \e\left( \frac{s}{k} q_r (\nu) \right)\theta_{L(r) + \nu}(\tau,\alpha,\beta),
\end{gather*}
where we introduced the Siegel theta functions of the rescaled lattice $L(r)$
\begin{gather*}
\theta_{L(r) + \nu}(\tau,\alpha,\beta) = \sum_{\delta \in L + \nu} \e\left( \tau q_r ( (\delta+\beta)_+ ) + \bar \tau q_r ( (\delta+\beta)_- ) -\left(\delta +\frac{\beta}{2},\alpha\right)_r \right).\tag*{\qed}
\end{gather*}\renewcommand{\qed}{}
\end{proof}

\section{Hecke operators}\label{hecke}
In this section, we define Hecke operators on $M_{v, \bar v, L}$ and study their algebraic properties.

\subsection{Classical Hecke operators}\label{scalarf}
Let us start by reviewing the standard theory of Hecke operators.
\begin{Def}Let $r$ be a positive integer and $f(\tau,\alpha,\beta)$ be scalar-valued modular of weight $(w, \bar w)$, as defined in Definition~\ref{d:vvmf}. We define the following Hecke operators on $f(\tau,\alpha,\beta)$
\begin{gather}\label{eq:heckejacobi}
T_r[f](\tau,\alpha,\beta) = r^{w + \bar{w}-1} \sum_{\substack{ k, l >0 \\ kl =r}} l ^{-w-\bar w} \sum_{s=0}^{l -1} f\left(\frac{k\tau+s}{l },k\alpha + s \beta, l \beta \right).
\end{gather}
\end{Def}
\begin{lem}\label{l:heckeclass}
$T_r[f](\tau,\alpha,\beta)$ is scalar-valued modular of weight $(w, \bar w)$.
\end{lem}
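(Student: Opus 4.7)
The plan is to reinterpret the formula \eqref{eq:heckejacobi} as a sum over right-coset representatives of $\mathrm{SL}_2(\mathbb{Z})\backslash M_r^*$, where $M_r^* \subset M_2(\mathbb{Z})$ is the set of integer matrices of determinant $r$. By the Hermite normal form, a complete set of representatives is given by the upper-triangular matrices
$R_r = \big\{\sigma_{k,s,l} := \begin{psmallmatrix} k & s \\ 0 & l \end{psmallmatrix} : kl = r,\ 0 \le s < l\big\}.$
I would begin by introducing a slash action of $\mathrm{GL}_2^+(\mathbb{Q})$ on real-analytic functions of $(\tau,\alpha,\beta)$ compatible with Definition~\ref{d:vvmf}: for $M=\begin{psmallmatrix} a & b \\ c & d \end{psmallmatrix}$ with $\det M > 0$,
\begin{gather*}
(f|_{w,\bar w} M)(\tau,\alpha,\beta) := (\det M)^{w+\bar w - 1}(c\tau+d)^{-w}(c\bar\tau+d)^{-\bar w}\, f\bigl(M\tau,\,a\alpha+b\beta,\,c\alpha+d\beta\bigr).
\end{gather*}
Specializing to $M=\sigma_{k,s,l}$ (for which $c=0$, $d=l$ and $\det M = r$) reproduces exactly the summand in~\eqref{eq:heckejacobi}, so that $T_r[f] = \sum_{\sigma \in R_r} f|_{w,\bar w}\sigma$.

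Next, I would verify two routine facts. First, the slash action is a genuine right action, i.e., $(f|M_1)|M_2 = f|(M_1 M_2)$; this follows from the cocycle identity $c_1(M_2\tau)+d_1)(c_2\tau+d_2) = c_{12}\tau+d_{12}$ for the lower rows, together with the linearity of the matrix action on $(\alpha,\beta)$. Second, since $w,\bar w \in \mathbb{Z}$ and $\rho$ is trivial, the hypothesis that $f$ is scalar-valued modular of weight $(w,\bar w)$ is equivalent to $f|_{w,\bar w}\gamma = f$ for every $\gamma \in \mathrm{SL}_2(\mathbb{Z})$, because $\phi_\gamma(\tau)^{2w} = (c\tau+d)^w$ has no branch ambiguity in the integer case.

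With these ingredients in place, the invariance of $T_r[f]$ under $\mathrm{SL}_2(\mathbb{Z})$ follows from the standard orbit-swapping argument. For fixed $\gamma \in \mathrm{SL}_2(\mathbb{Z})$, every product $\sigma\gamma$ with $\sigma \in R_r$ lies in $M_r^*$ and thus admits a unique decomposition $\sigma\gamma = \gamma_\sigma \sigma'$ with $\gamma_\sigma \in \mathrm{SL}_2(\mathbb{Z})$ and $\sigma' \in R_r$. The induced map $R_r \to R_r$, $\sigma \mapsto \sigma'$, is a bijection. Using the right-action property and the $\mathrm{SL}_2(\mathbb{Z})$-invariance of $f$, one concludes
\begin{gather*}
T_r[f]\big|_{w,\bar w}\gamma = \sum_{\sigma \in R_r} f|_{w,\bar w}(\sigma\gamma) = \sum_{\sigma \in R_r} (f|_{w,\bar w}\gamma_\sigma)|_{w,\bar w}\sigma' = \sum_{\sigma' \in R_r} f|_{w,\bar w}\sigma' = T_r[f].
\end{gather*}

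The main (and essentially the only) obstacle is checking that $R_r$ really is a complete, non-redundant set of right-coset representatives for $\mathrm{SL}_2(\mathbb{Z})\backslash M_r^*$ and that the map $\sigma \mapsto \sigma'$ is a bijection on it; both facts are classical consequences of Hermite reduction for $2\times 2$ integer matrices. Once these are granted, the whole argument is a mechanical instance of the coset-swapping proof that works for classical elliptic modular forms, the Jacobi-like variables $(\alpha,\beta)$ merely going along for the ride because the slash action treats them via the same matrix that acts on $\tau$.
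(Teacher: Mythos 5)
Your proof is correct and follows essentially the same route as the paper, which simply invokes the classical coset-swapping argument for scalar Hecke operators (citing Stein's Proposition~2.28) and notes that the Jacobi-like variables do not change it. Your write-up makes that explicit: the extended slash action of $\mathrm{GL}_2^+(\mathbb{Q})$ with the Hermite normal form representatives $\begin{psmallmatrix} k & s \\ 0 & l \end{psmallmatrix}$, $kl=r$, $0\le s<l$, reproduces the summands of \eqref{eq:heckejacobi} exactly, and the permutation of right cosets under right multiplication by $\gamma\in\mathrm{SL}_2(\mathbb{Z})$ gives the invariance, just as in the standard case.
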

\begin{proof}
Even with the addition of Jacobi-like variables, the argument is word by word the same as for standard modular forms (see for example \cite[Proposition 2.28]{wstein}).
\end{proof}

\begin{Rem}The operators $T_r$ defined above are analogous to the Hecke operators $V_r$ of Eichler--Zagier in \cite[Section~1.4]{ez} after a certain choice of coset representatives.
\end{Rem}

\begin{Rem}\label{r:heckecoprime}We note here that there is a different definition of Hecke operators as double coset operators of the modular or the metaplectic group (see for instance~\cite{DS}). We will use this alternative definition in Section \ref{form2} in making the comparison to the work of Bruinier--Stein. More specifically, the decomposition of a double coset of the metaplectic group considered by Bruinier--Stein amounts to imposing the condition that the summation variable $s$ and $r$ are coprime (where $s$, $r$ are as in~\eqref{eq:heckejacobi}). This gives a different definition of the Hecke operators, but it is just a choice, and does not affect modularity or the algebraic results in any way.
\end{Rem}

To study algebraic relations satisfied by Hecke operators, we define scaling operators:

\begin{Def}\label{d:scalingscalar}Let $r$ be a positive integer and $f(\tau,\alpha,\beta)$ be scalar-valued modular of weight $(w,\bar w)$. We define the scaling operators $U_{r^2}$ by
\begin{gather*}
U_{r^2}[f](\tau,\alpha,\beta)=f(\tau, r\alpha, r \beta).
\end{gather*}
\end{Def}
It is clear that:
\begin{lem}$U_{r^2}[f](\tau,\alpha,\beta)$ is scalar-valued modular of weight $(w, \bar w)$.
\end{lem}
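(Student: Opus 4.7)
The plan is to verify the modular transformation property directly from the definition, since scaling the Jacobi variables commutes with the linear action $(\alpha,\beta) \mapsto (a\alpha+b\beta, c\alpha+d\beta)$ of $\mathrm{SL}_2(\mathbb{Z})$. Specifically, for any $M = \begin{psmallmatrix} a & b \\ c & d \end{psmallmatrix} \in \mathrm{SL}_2(\mathbb{Z})$, the pair $(r\alpha, r\beta)$ transforms as $(a(r\alpha)+b(r\beta), c(r\alpha)+d(r\beta)) = (r(a\alpha+b\beta), r(c\alpha+d\beta))$, so applying the modular transformation law of $f$ at the rescaled Jacobi variables immediately gives what is needed.

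More concretely, first I would unwind the definition:
\begin{gather*}
U_{r^2}[f](M\tau, a\alpha + b\beta, c\alpha + d\beta) = f\bigl(M\tau,\, r(a\alpha + b\beta),\, r(c\alpha + d\beta)\bigr) = f\bigl(M\tau,\, a(r\alpha) + b(r\beta),\, c(r\alpha) + d(r\beta)\bigr).
\end{gather*}
Then I would apply the assumed scalar-valued modularity of $f$ of weight $(w,\bar w)$ with Jacobi arguments $(r\alpha, r\beta)$, obtaining
\begin{gather*}
f\bigl(M\tau,\, a(r\alpha) + b(r\beta),\, c(r\alpha) + d(r\beta)\bigr) = \phi_M(\tau)^{2w}\,\overline{\phi_M(\tau)}^{2\bar w}\, f(\tau, r\alpha, r\beta) = \phi_M(\tau)^{2w}\,\overline{\phi_M(\tau)}^{2\bar w}\, U_{r^2}[f](\tau,\alpha,\beta).
\end{gather*}
This is exactly the scalar-valued modular transformation law of weight $(w,\bar w)$ for $U_{r^2}[f]$, completing the argument. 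Real analyticity in $(\tau,\alpha,\beta)$ is preserved since $U_{r^2}$ is a pullback along the real-analytic map $(\tau,\alpha,\beta)\mapsto(\tau,r\alpha,r\beta)$.

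There is no real obstacle here; the only subtlety worth flagging is that the scalar-valued modularity of $f$ in Definition~\ref{d:vvmf} is stated as holding for \emph{all} $(\alpha,\beta) \in W\times W$, which is exactly what is needed in order to substitute $(r\alpha, r\beta)$ in place of $(\alpha,\beta)$. Had Jacobi-like variables been absent, there would be nothing to prove; with them, the content of the lemma reduces to the observation that multiplication by $r$ on $W$ is $\mathrm{SL}_2(\mathbb{Z})$-equivariant, which is immediate since the action on $W\times W$ is linear.
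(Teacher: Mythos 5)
Your proof is correct: the observation that multiplication by $r$ on $W$ commutes with the linear $\mathrm{SL}_2(\mathbb{Z})$ action on $(\alpha,\beta)$, followed by applying the modularity of $f$ at the arguments $(r\alpha, r\beta)$, is exactly the argument the paper has in mind when it states the lemma with no proof ("It is clear that"). Nothing further is needed.
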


Hecke operators satisfy algebraic relations summarized in the following lemma.
\begin{lem}\label{l:heckealgebra}
For $m$ and $n$ such that $\gcd(m,n)=1$,
\begin{gather} \label{eq:jcommute}
T_m \circ T_n = T_{mn},
\end{gather}
and for $l \geq 2$ and $p$ prime,
\begin{gather}
\label{jacobirecursion}
T_{p^{l}} = T_{p} \circ T_{p^{l-1}} - p^{w + \bar w -1} U_{p^2} \circ T_{p^{l-2}} .
\end{gather}
\end{lem}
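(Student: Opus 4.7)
The plan is to expand both compositions directly from~\eqref{eq:heckejacobi} and repackage the inner sums via upper-triangular integer matrices. The transformation $(\tau,\alpha,\beta)\mapsto\bigl(\tfrac{k\tau+s}{l},k\alpha+s\beta,l\beta\bigr)$ is the action of $M_{k,s,l}:=\begin{psmallmatrix}k & s\\ 0 & l\end{psmallmatrix}$, and the matrix product
\[
M_{k_1,s_1,l_1}\,M_{k_2,s_2,l_2}=\begin{pmatrix}k_1k_2 & k_1 s_2+l_2 s_1 \\ 0 & l_1 l_2\end{pmatrix}
\]
turns $(T_m\circ T_n)[f]$ into a quadruple sum over $k_1 l_1=m$, $k_2 l_2=n$, $s_1\in[0,l_1)$, $s_2\in[0,l_2)$ with prefactor $(mn)^{w+\bar w-1}(l_1 l_2)^{-w-\bar w}$, the summand being $f$ evaluated at the product matrix acting on $(\tau,\alpha,\beta)$.

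For \eqref{eq:jcommute} I would use $\gcd(m,n)=1$ through two bijections. First, $(k_1,k_2)\mapsto k:=k_1 k_2$ bijects divisor pairs of $(m,n)$ onto divisors of $mn$, and $l=l_1 l_2$ is then determined. Second, coprimality forces $\gcd(l_1,l_2)=1$ and $\gcd(k_1,l_2)=1$, so $(s_1,s_2)\mapsto s:=k_1 s_2+l_2 s_1\pmod{l_1 l_2}$ is a Chinese Remainder bijection onto $[0,l_1 l_2)$. These collapse the quadruple sum into the single sum defining $T_{mn}[f]$, with matching prefactor $(mn)^{w+\bar w-1}l^{-w-\bar w}$.

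For \eqref{jacobirecursion} I would expand $(T_p\circ T_{p^{l-1}})[f]$ likewise and partition the terms according to whether $\gcd(k_1,l_2)=1$. The coprime terms---namely $(k_1,l_1)=(1,p)$ paired with any $(k_2,l_2)=(p^i,p^{l-1-i})$, together with the single term $(k_1,l_1)=(p,1)$, $(k_2,l_2)=(p^{l-1},1)$---reassemble exactly into $T_{p^l}[f]$ via the same CRT bijection, each divisor pair of $p^l$ appearing once with the correct normalization. The non-coprime terms, where $(k_1,l_1)=(p,1)$ and $(k_2,l_2)=(p^i,p^{l-1-i})$ with $0\le i\le l-2$, have $s_1=0$ forced and $s_2\in[0,p^{l-1-i})$; I would write $s_2=s'+p^{l-2-i}a$ with $s'\in[0,p^{l-2-i})$ and $a\in[0,p)$ and invoke the $T^a$-invariance of scalar-valued modular forms---which simultaneously performs $\tau\mapsto\tau+a$ and $\alpha\mapsto\alpha+a\beta$ with $\phi=1$---to absorb the $a$-shift in the first argument against the rescaled $\beta$ in the third. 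The sum over $a$ then produces a factor of $p$, and the resulting expression matches, term by term, a summand of $T_{p^{l-2}}[f]$ evaluated at $(\tau,p\alpha,p\beta)$; a short bookkeeping of $p$-powers confirms the overall prefactor $p^{w+\bar w-1}$, giving precisely $p^{w+\bar w-1}\,U_{p^2}\circ T_{p^{l-2}}[f]$.

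The main technical point is the non-coprime step: reparametrizing $s_2$ so that the $a$-shift in $\tau$ is absorbed by the compensating shift in $\alpha$ (with $\beta$ already rescaled by $p^{l-1-i}$), and tracking the accumulated powers of $p$ so that they line up exactly with $p^{w+\bar w-1}$. Apart from this, the combinatorics is classical and, as noted after Lemma~\ref{l:heckeclass}, the addition of Jacobi-like variables does not alter the argument; cf.~\cite[Proposition~2.28]{wstein}.
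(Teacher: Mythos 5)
Your proposal is correct and is essentially the argument the paper itself invokes: the paper's proof simply defers to the classical coset/matrix computation of Propositions~2.28 and~2.29 of~\cite{wstein}, which is exactly what you carry out, and you correctly identify the only point where the Jacobi-like variables matter, namely that the $T$-invariance used to reduce the shift $s$ modulo $l$ (and to collapse the non-coprime terms) simultaneously sends $\alpha\mapsto\alpha+a\beta$, so every reduction goes through and the leftover terms assemble into $p^{w+\bar w-1}\,U_{p^2}\circ T_{p^{l-2}}$ with the right power of $p$. The only cosmetic slip is the ordering convention in your matrix product (the inner operator's matrix should multiply on the left when the action is $f\mapsto f(M\tau,\dots)$), which only relabels the terms and does not affect the bijections or the bookkeeping.
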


\begin{proof}Relations~\eqref{eq:jcommute} and \eqref{jacobirecursion} can be proved following the exact same steps as the proof of the respective relations for standard modular forms presented for instance in Propositions~2.28 and~2.29 of~\cite{wstein}.
\end{proof}

\subsection[Hecke operators on $M_{v, \bar v, L}$]{Hecke operators on $\boldsymbol{M_{v, \bar v, L}}$}

Let us now define Hecke operators on the space $M_{v, \bar v, L}$ of $\mathbb{C}[A]$-valued real analytic functions that are vector-valued modular of type $\rho_L$ and weight $(v, \bar v)$.

\begin{Def}\label{d:hecke1}Let $\psi(\tau)=\sum\limits_{\lambda \in A}\psi_\lambda (\tau) e_{\lambda}$ be vector-valued modular of weight $(v,\bar v)$ and type~$\rho_L$. Let $(w, \bar w) = \big( v + \frac{b^+}{2}, \bar v + \frac{b^-}{2} \big)$. We define the operators $\mathcal{T}_r$ by
\begin{gather*}
\mathcal{T}_r[\psi](\tau) =
r^{w+\bar{w}- 1} \sum_{\mu \in A(r)} \left( \sum_{\substack{k, l >0 \\ k l = r}} \frac{1}{l^{w + \bar w }} \sum_{s=0}^{l-1} \Delta_r(\mu,k) \e\left( - \frac{s}{k} q_r (\mu) \right) \psi_{l \mu} \left( \frac{k \tau + s}{l} \right) \right) e_\mu,
\end{gather*}
with $\Delta_r(\mu,k)$ defined in Definition~\ref{d:delta}.
\end{Def}

The main result is:
\begin{Th}\label{t:hecke}For any positive integer $r$
\begin{gather*}
T_r\left[ \Braket{\psi, \Theta_L } \right](\tau, \alpha, \beta) = \Braket{\mathcal{T}_r[\psi], \Theta_{L(r)} } (\tau, \alpha, \beta).
\end{gather*}
In other words, the standard Hecke transforms of the scalar-valued $ \Braket{ \psi, \Theta_L }(\tau, \alpha, \beta)$ are equal to the scalar-valued $ \Braket{ \mathcal{T}_r[\psi], \Theta_{L(r)} } (\tau, \alpha, \beta)$ obtained by pairing $\mathcal{T}_r[\psi](\tau)$ with the Siegel theta functions of the rescaled lattice~$L(r)$.
\end{Th}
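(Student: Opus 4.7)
The plan is to unpack the left-hand side by expanding both the scalar-valued Hecke operator $T_r$ and the pairing $\langle\psi,\Theta_L\rangle$, then apply Lemma~\ref{l:theta} (or rather its complex conjugate) to rewrite the Siegel theta functions of $L$ at the rescaled arguments $\bigl(\tfrac{k\tau+s}{l},k\alpha+s\beta,l\beta\bigr)$ as sums of Siegel theta functions of the rescaled lattice $L(r)$ at $(\tau,\alpha,\beta)$. Reorganizing the resulting quadruple sum by pulling out a summand indexed by $\nu\in A(r)$ should then reveal the $\nu$-component of $\mathcal{T}_r[\psi](\tau)$ exactly as defined in Definition~\ref{d:hecke1}, so the right-hand side appears automatically.

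More concretely, I would first write
\begin{gather*}
T_r[\langle\psi,\Theta_L\rangle](\tau,\alpha,\beta) = r^{w+\bar w-1}\sum_{\substack{k,l>0\\kl=r}}l^{-w-\bar w}\sum_{s=0}^{l-1}\sum_{\gamma\in A}\psi_\gamma\!\left(\tfrac{k\tau+s}{l}\right)\bar\theta_{L+\gamma}\!\left(\tfrac{k\tau+s}{l},k\alpha+s\beta,l\beta\right),
\end{gather*}
using the definitions of $T_r$ and the inner product. Taking complex conjugates of the identity in Lemma~\ref{l:theta}, the inner Siegel theta factor becomes
\begin{gather*}
\bar\theta_{L+\gamma}\!\left(\tfrac{k\tau+s}{l},k\alpha+s\beta,l\beta\right) = \sum_{\substack{\nu\in A(r)\\l\nu=\gamma}}\Delta_r(\nu,k)\,\e\!\left(-\tfrac{s}{k}q_r(\nu)\right)\bar\theta_{L(r)+\nu}(\tau,\alpha,\beta).
\end{gather*}

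The next step is to interchange the order of summation: the constraint $l\nu=\gamma$ with $\Delta_r(\nu,k)=1$ forces $\nu\in A(l)\subseteq A(r)$, which makes $l\nu$ a well-defined element of $A=L'/L$, so summing over $\gamma\in A$ with the constraint is equivalent to summing over $\nu\in A(r)$ weighted by $\Delta_r(\nu,k)$ (with $\gamma=l\nu$ substituted in $\psi_\gamma$). After this interchange, the expression becomes
\begin{gather*}
\sum_{\nu\in A(r)}\Biggl[r^{w+\bar w-1}\sum_{\substack{k,l>0\\kl=r}}l^{-w-\bar w}\sum_{s=0}^{l-1}\Delta_r(\nu,k)\,\e\!\left(-\tfrac{s}{k}q_r(\nu)\right)\psi_{l\nu}\!\left(\tfrac{k\tau+s}{l}\right)\Biggr]\bar\theta_{L(r)+\nu}(\tau,\alpha,\beta),
\end{gather*}
and the bracketed expression is precisely the $\nu$-component of $\mathcal{T}_r[\psi](\tau)$ per Definition~\ref{d:hecke1}, which collapses the whole thing into $\langle\mathcal{T}_r[\psi],\Theta_{L(r)}\rangle(\tau,\alpha,\beta)$.

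The only genuinely subtle point is the re-indexing step: one must verify that the $\Delta_r(\nu,k)$ indicator correctly captures the condition $\nu\in\tfrac{1}{l}L'/L$ needed for $l\nu$ to make sense in $A$, and that every pair $(\gamma,\nu)$ with $l\nu=\gamma$ and $\nu\in A(l)$ is enumerated exactly once when the order of summation is swapped. Everything else is bookkeeping—distributing sums, commuting finite sums past the inner product, and recognizing the complex conjugate of $\e(sq_r(\nu)/k)$ as $\e(-sq_r(\nu)/k)$. I expect no obstruction beyond carefully tracking the indexing sets $A$, $A(l)$, and $A(r)$ and the sign flip coming from the conjugation.
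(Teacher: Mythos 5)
Your proposal is correct and follows essentially the same route as the paper's own proof: expand $T_r$ on the pairing, apply the complex conjugate of Lemma~\ref{l:theta} to convert $\bar\theta_{L+\gamma}$ at the transformed arguments into a sum of $\bar\theta_{L(r)+\nu}$, swap the $\gamma$- and $\nu$-sums using the constraint $l\nu=\gamma$ with the $\Delta_r(\nu,k)$ indicator, and recognize Definition~\ref{d:hecke1}. Your explicit attention to the re-indexing and the sign flip under conjugation matches (and slightly elaborates on) the paper's argument, so no changes are needed.
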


An immediate corollary, using Lemmas~\ref{l:scatovv} and~\ref{l:heckeclass}, is
\begin{Cor}\label{c:modularity}
If $\psi(\tau)$ is vector-valued modular of weight $(v,\bar v)$ and type $\rho_L$, then $\mathcal{T}_r[\psi](\tau)$
is vector-valued modular of type $\rho_{L(r)}$ of the same weight. In other words, Definition~{\rm \ref{d:hecke1}} gives Hecke operators
\begin{gather*}
\mathcal{T}_r \colon \ M_{v, \bar v, \rho_L} \to M_{ v, \bar v, \rho_{L(r)}}.
\end{gather*}
\end{Cor}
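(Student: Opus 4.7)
The plan is to chain together the three results already established in the excerpt: Lemma \ref{l:scatovv} (in both directions), Lemma \ref{l:heckeclass}, and Theorem \ref{t:hecke}. The Siegel theta pairing $\langle \cdot, \Theta_L\rangle$ serves as a bridge that converts the vector-valued modularity question for $\mathcal{T}_r[\psi]$ into a scalar-valued modularity question, where Lemma \ref{l:heckeclass} can be invoked directly.

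First, since $\psi$ is vector-valued modular of weight $(v,\bar v)$ and type $\rho_L$ by hypothesis, the forward direction of Lemma \ref{l:scatovv} tells me that $\langle \psi, \Theta_L\rangle(\tau,\alpha,\beta)$ is scalar-valued modular of weight $(w,\bar w)=\bigl(v+\tfrac{b^+}{2},\bar v+\tfrac{b^-}{2}\bigr)$. Next, Lemma \ref{l:heckeclass} guarantees that applying the classical Hecke operator preserves scalar-valued modularity of the same weight, so $T_r\bigl[\langle \psi, \Theta_L\rangle\bigr](\tau,\alpha,\beta)$ is again scalar-valued modular of weight $(w,\bar w)$. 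Then Theorem \ref{t:hecke} identifies this scalar-valued modular object with $\langle \mathcal{T}_r[\psi], \Theta_{L(r)}\rangle(\tau,\alpha,\beta)$; hence the latter pairing is scalar-valued modular of weight $(w,\bar w)$.

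Finally I apply the converse direction of Lemma \ref{l:scatovv}, but now to the rescaled lattice $L(r)$ instead of $L$. Here I need to note the small but crucial point that rescaling the bilinear form by a positive integer $r$ does not change the signature: $L(r)$ still has signature $(b^+,b^-)$. Consequently the ``half-dimension shift'' in Lemma \ref{l:scatovv} for $L(r)$ is the same $\bigl(\tfrac{b^+}{2},\tfrac{b^-}{2}\bigr)$ as for $L$. The converse direction then yields that $\mathcal{T}_r[\psi](\tau)$ is vector-valued modular of type $\rho_{L(r)}$ and weight $\bigl(w-\tfrac{b^+}{2},\bar w-\tfrac{b^-}{2}\bigr)=(v,\bar v)$, which is exactly the claim. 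Rephrased at the level of spaces, this establishes $\mathcal{T}_r\colon M_{v,\bar v,\rho_L}\to M_{v,\bar v,\rho_{L(r)}}$.

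There is no real obstacle; the work was done in the preceding lemmas and theorem. The only thing to be careful about is invoking Lemma \ref{l:scatovv} on the correct lattice ($L(r)$ rather than $L$) in the last step, which relies on the linear independence of the Siegel theta functions $\{\theta_{L(r)+\mu}\}_{\mu\in A(r)}$ in the $\alpha$ variable. That linear independence is proved (for an arbitrary even non-degenerate integral lattice) in the lemma following Remark \ref{r:linearind}, and applies verbatim to $L(r)$.
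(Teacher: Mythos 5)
Your proposal is correct and follows exactly the paper's intended argument: the paper derives this corollary immediately from Theorem~\ref{t:hecke} together with Lemmas~\ref{l:scatovv} and~\ref{l:heckeclass}, which is precisely the chain you describe. Your added remarks that $L(r)$ has the same signature as $L$ (so the weight shift is unchanged) and that the linear independence of the $\theta_{L(r)+\mu}$ justifies the converse direction of Lemma~\ref{l:scatovv} for the rescaled lattice are correct and make explicit what the paper leaves implicit.
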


This is the main reason for Definition~\ref{d:hecke1}. Let us now prove Theorem~\ref{t:hecke}.

\begin{proof}[Proof of Theorem~\ref{t:hecke}]We have
\begin{align*}
T_r\left[ \Braket{ \psi, \Theta_L} \right](\tau, \alpha, \beta) & = T_r \left[ \sum_{\lambda \in A} \psi_{\lambda}(\tau) \bar \theta_{L+\lambda}(\tau, \alpha, \beta) \right] \\
& = r^{w+\bar{w} - 1} \sum_{\substack{k, l >0 \\ k l = r}} \frac{1}{l^{w + \bar w }} \sum_{s=0}^{l-1} \sum_{\lambda \in A} \psi_\lambda \!\left( \frac{k \tau + s}{l} \right)\! \bar \theta_{L+\lambda}\!\left( \frac{k \tau + s}{l}, k \alpha + l \beta, l \beta \right).
\end{align*}
By Lemma~\ref{l:theta}, we know that
\begin{gather*}
\bar \theta_{L + \lambda} \left( \frac{k \tau + s}{l },k \alpha + l \beta, l \beta \right)
= \sum_{\substack{\nu \in A(r) \\ l \nu = \lambda}}\Delta_r(\nu,k) \e\left( -\frac{s}{k} q_r (\nu) \right) \bar \theta_{L(r) + \nu}(\tau,\alpha,\beta).
\end{gather*}
Substituting, we get
\begin{gather*}
T_r \left[ \Braket{ \psi, \Theta_L } \right](\tau, \alpha, \beta)\nonumber\\
\quad{}= r^{w+\bar{w}- 1} \sum_{\substack{k, l >0 \\ k l = r}} \frac{1}{l^{w + \bar w }} \sum_{s=0}^{l-1} \sum_{\lambda \in A} \sum_{\substack{\nu \in A(r) \\ l \nu = \lambda}}\Delta_r(\nu,k) \e\left( - \frac{s}{k} q_r (\nu) \right) \psi_\lambda \left( \frac{k \tau + s}{l} \right) \bar \theta_{L(r) + \nu}(\tau,\alpha,\beta) \nonumber\\
\quad{}= r^{w+\bar{w}- 1}\sum_{\nu \in A(r)} \sum_{\substack{k, l >0 \\ k l = r}} \frac{1}{l^{w + \bar w }} \sum_{s=0}^{l-1} \Delta_r(\nu,k) \e\left( - \frac{s}{k} q_r (\nu) \right) \psi_{l \nu} \left( \frac{k \tau + s}{l} \right) \bar \theta_{L(r) + \nu}(\tau,\alpha,\beta)\nonumber\\
\quad{}= \Braket{\mathcal{T}_r[\psi], \Theta_{L(r)}} (\tau, \alpha, \beta),
%\label{eq:hecke}
\end{gather*}
where we used Definition~\ref{d:hecke1}.
\end{proof}

\begin{Rem}The components of the vector-valued modular $\mathcal{T}_r[\psi](\tau)$ are precisely the $Z_{r,\delta}(\tau)$ constructed in \cite[Section~6]{orig}\footnote{We leave it as an exercise for the reader to translate the notation currently used into the notation of~\cite{orig}.}, which arise naturally from the partition function of generalized Donaldson--Thomas invariants of K3-fibered Calabi--Yau threefolds. In~\cite{orig}, the relevant lattice~$L$ has rank $l$ and signature $(1,l-1)$. Thus the Siegel theta function $\Theta_L(\tau,\alpha,\beta)$ has weight $\big(\frac{b^+}{2}, \frac{b^-}{2}\big) = \big( \frac{1}{2}, \frac{l-1}{2} \big)$. The construction of~\cite{orig} starts with a vector-valued modular form~$\psi(\tau)$ of type $\rho_L$ and weight $(v,\bar v) = \big({-} 1 - \frac{l}{2}, 0 \big)$. Then it is proved by direct calculations that the $Z_{r,\delta}(\tau)$ are the components of a vector-valued modular form of the same weight and type $\rho_{L(r)}$. With the construction proposed in the current paper, such a modularity statement follows directly from Corollary~\ref{c:modularity}.
\end{Rem}

\subsection[Algebraic relations satisfied by the operators $\mathcal{T}_r$]{Algebraic relations satisfied by the operators $\boldsymbol{\mathcal{T}_r}$}

In this section we study algebraic relations satisfied by the Hecke operators $\mathcal{T}_r$. Those trickle down from the corresponding relations stated in Lemma~\ref{l:heckealgebra} for the standard Hecke operators~$T_r$.

Firstly, from Theorem~\ref{t:hecke} and the commutativity of the scalar-valued Hecke operators it immediately follows that the vector-valued Hecke operators~$\mathcal{T}_m$ commute under the coprime condition.

\begin{lem}\label{l:coprime}
For $m$ and $n$ coprime, we have
\begin{gather*}
\mathcal{T}_m \mathcal{T}_n = \mathcal{T}_n \mathcal{T}_m.
\end{gather*}
\end{lem}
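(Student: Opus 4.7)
The plan is to deduce vector-valued commutativity from the scalar-valued commutativity via the pairing with Siegel theta functions, exploiting the correspondence established in Theorem~\ref{t:hecke} together with the linear independence used in the proof of Lemma~\ref{l:scatovv}.

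First I would observe that the rescaling operation composes as $(L(n))(m) = L(mn) = (L(m))(n)$ because the bilinear form is simply multiplied by the respective integers. In particular, applying $\mathcal{T}_n$ to $\psi \in M_{v,\bar v, \rho_L}$ produces a vector-valued modular form of type $\rho_{L(n)}$ (by Corollary~\ref{c:modularity}), to which $\mathcal{T}_m$ can be applied, landing in $M_{v,\bar v, \rho_{L(mn)}}$. The same target space is reached by the opposite composition.

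Next, I would iterate Theorem~\ref{t:hecke}: starting from $\psi \in M_{v,\bar v,\rho_L}$,
\begin{gather*}
T_m \circ T_n\bigl[\langle \psi, \Theta_L\rangle\bigr](\tau,\alpha,\beta) = T_m\bigl[\langle \mathcal{T}_n[\psi], \Theta_{L(n)}\rangle\bigr](\tau,\alpha,\beta) = \langle \mathcal{T}_m \mathcal{T}_n[\psi], \Theta_{L(mn)}\rangle(\tau,\alpha,\beta),
\end{gather*}
and, analogously, $T_n \circ T_m [\langle \psi, \Theta_L\rangle] = \langle \mathcal{T}_n \mathcal{T}_m[\psi], \Theta_{L(mn)}\rangle$. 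By Lemma~\ref{l:heckealgebra} (the coprime case), the scalar-valued left-hand sides coincide, so the two pairings with $\Theta_{L(mn)}$ are equal.

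Finally, I would invoke the linear independence of the Siegel theta functions $\{\theta_{L(mn)+\mu}\}_{\mu \in A(mn)}$ in the $\alpha$ variable, which was the content of the second half of the proof of Lemma~\ref{l:scatovv}. This forces the $e_\mu$-components of $\mathcal{T}_m\mathcal{T}_n[\psi]$ and $\mathcal{T}_n\mathcal{T}_m[\psi]$ to agree for every $\mu \in A(mn)$, yielding the identity of operators. The only subtlety is the bookkeeping check that the target lattice is indeed the same on both sides, which is precisely why coprimality does not yet enter in a substantive way at this step beyond being the hypothesis under which the scalar operators commute.
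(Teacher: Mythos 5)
Your argument is correct and matches the paper's (the paper treats this as immediate from Theorem~\ref{t:hecke} together with commutativity of the scalar-valued Hecke operators, exactly the iteration you spell out). The extra details you supply — that $(L(n))(m)=L(mn)$ and that linear independence of the Siegel theta components forces componentwise equality — are precisely the implicit steps behind the paper's one-line justification.
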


Recall the scaling operators $U_{n^2}$ from Definition~\ref{d:scalingscalar}. We now define scaling operators $\mathcal{U}_{n^2}$ on~$M_{v, \bar v, L}$.

\begin{Def}\label{d:scaling}Let $\psi(\tau)=\sum\limits_{\lambda \in A}\psi_\lambda (\tau) \, e_{\lambda}$ be vector-valued modular of type $\rho_L$. We define the scaling operators $\mathcal{U}_{n^2}$ by
\begin{gather*}
\mathcal{U}_{n^2}[\psi](\tau) = \sum_{\nu \in A(n^2)}\Delta_{n^2}(\nu, n) \psi_{n \nu}(\tau) e_\nu.
\end{gather*}
\end{Def}

\begin{Rem}The scaling operator appears previously in~\cite{bru} and~\cite{n} as induction of vector-valued modular forms from isotropic subgroups $H \subset A$ of discriminant forms denoted by $g{\uparrow}^A_H$ and as the $U_n$-operator on rank~1 Jacobi forms in~\cite{ez}.
\end{Rem}

Then we have:
\begin{lem}\label{l:scaling}
For any positive integer $n$,
\begin{gather*}
U_{n^2}\left[ \Braket{\psi, \Theta_L} \right] (\tau, \alpha, \beta) = \Braket{ \mathcal{U}_{n^2}[\psi], \Theta_{L(n^2)}} (\tau, \alpha, \beta).
\end{gather*}
\end{lem}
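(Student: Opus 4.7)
The plan is to reduce this lemma to a direct specialization of Lemma~\ref{l:theta}. The key observation is that the scalar scaling operator $U_{n^2}$ acts by $\alpha \mapsto n\alpha$, $\beta \mapsto n\beta$, and these are precisely the variable substitutions appearing on the left-hand side of Lemma~\ref{l:theta} in the special case $k = l = n$, $s = 0$, and $r = n^2$ (since then $\tfrac{k\tau+s}{l} = \tau$, $k\alpha + s\beta = n\alpha$, and $l\beta = n\beta$).

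First I would expand the left-hand side using Definition~\ref{d:scalingscalar} together with the formula for $\Braket{\psi,\Theta_L}$ given in Lemma~\ref{l:scatovv}:
\begin{gather*}
U_{n^2}\left[\Braket{\psi,\Theta_L}\right](\tau,\alpha,\beta) = \sum_{\lambda \in A} \psi_\lambda(\tau)\, \bar{\theta}_{L+\lambda}(\tau, n\alpha, n\beta).
\end{gather*}
Next, I would invoke Lemma~\ref{l:theta} (and take complex conjugates) with the specialization above. Because $s = 0$, the phase $\e\!\left(\tfrac{s}{k} q_r(\nu)\right)$ collapses to $1$, and the lemma reduces to
\begin{gather*}
\bar{\theta}_{L+\lambda}(\tau, n\alpha, n\beta) = \sum_{\substack{\nu \in A(n^2) \\ n\nu = \lambda}} \Delta_{n^2}(\nu,n)\, \bar{\theta}_{L(n^2)+\nu}(\tau, \alpha, \beta).
\end{gather*}

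Substituting this into the expanded left-hand side yields a double sum over $\lambda \in A$ and $\nu \in A(n^2)$ subject to $n\nu = \lambda$. I would then reorganize this as a single sum over $\nu \in A(n^2)$ (since $\lambda$ is uniquely determined by $\nu$ via $\lambda = n\nu$), to obtain
\begin{gather*}
\sum_{\nu \in A(n^2)} \Delta_{n^2}(\nu,n)\, \psi_{n\nu}(\tau)\, \bar{\theta}_{L(n^2)+\nu}(\tau,\alpha,\beta),
\end{gather*}
which matches $\Braket{\mathcal{U}_{n^2}[\psi], \Theta_{L(n^2)}}(\tau,\alpha,\beta)$ by Definition~\ref{d:scaling} and the definition of the inner product.

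There is no serious obstacle: once one recognizes that $U_{n^2}$ acts exactly like the $s=0$, $k=l=n$ case of the transformation already computed in Lemma~\ref{l:theta}, the result follows by a single substitution and a re-indexing of the sum. The only minor point to be careful about is ensuring that the constraint $n\nu = \lambda$ is correctly repackaged when switching the outer summation variable from $\lambda$ to $\nu$, and that the $\Delta_{n^2}(\nu,n)$ factor is retained to restrict to those $\nu \in A(n^2)$ whose $n$-fold multiples indeed lie in $A \subseteq A(n^2)$.
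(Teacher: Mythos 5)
Your proposal is correct and follows exactly the paper's own argument: expand $U_{n^2}[\Braket{\psi,\Theta_L}]$ as $\sum_{\lambda \in A}\psi_\lambda(\tau)\bar\theta_{L+\lambda}(\tau,n\alpha,n\beta)$, apply Lemma~\ref{l:theta} with $k=l=n$, $s=0$, $r=n^2$, and re-index the resulting double sum over $\nu$. Nothing to add.
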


\begin{proof}We have
\begin{gather*}
U_{n^2}\left[ \Braket{\psi, \Theta_L} \right] (\tau, \alpha, \beta) = U_{n^2} \bigg[ \sum_{\lambda \in A} \psi_\lambda(\tau) \bar \theta_{L+\lambda}(\tau, \alpha, \beta) \bigg]
 = \sum_{\lambda \in A}\psi_{\lambda}(\tau)\bar \theta_{L+\lambda}(\tau,n \alpha,n \beta).
\end{gather*}
But Lemma~\ref{l:theta}, with $k=n$, $l=n$ and $s=0$, states that
\begin{gather*}%\label{eq:theta}
\bar \theta_{L+\lambda}(\tau,n \alpha,n \beta)=\sum_{\substack{ \nu \in A(n^2) \\ n \nu=\lambda}} \Delta_{n^2}(\nu, n) \bar \theta_{L(n^2)+\nu}\left(\tau,\alpha,\beta \right).
\end{gather*}
Thus
\begin{align*}
U_{n^2}\left[ \Braket{\psi, \Theta_L} \right] (\tau, \alpha, \beta)& = \sum_{\lambda \in A}\psi_{\lambda}(\tau)\sum_{\substack{ \nu \in A(n^2) \\ n \nu=\lambda}} \Delta_{n^2}(\nu, n) \bar \theta_{L(n^2)+\nu}\left(\tau,\alpha,\beta \right)\\
&= \sum_{\nu \in A(n^2)}\Delta_{n^2}(\nu, n) \psi_{n \nu}(\tau) \bar \theta_{L(n^2)+\nu}\left(\tau,\alpha,\beta \right)\\
&= \Braket{ \mathcal{U}_{n^2}[\psi], \Theta_{L(n^2)} } (\tau, \alpha, \beta).\tag*{\qed}
\end{align*}\renewcommand{\qed}{}
\end{proof}

\begin{Rem}The proofs of Theorem~\ref{t:hecke} and Lemma~\ref{l:scaling} are analogous to the respective computations in~\cite{raum}.
\end{Rem}

It immediately follows from Lemmas \ref{l:scatovv} and \ref{l:heckeclass} that:

\begin{Cor}Let $\psi(\tau)$ be vector-valued modular of type $\rho_L$. Then $\mathcal{U}_{n^2}[\psi](\tau)$ is vector-valued modular of type $\rho_{L(n^2)}$ of the same weight. In other words, Definition~{\rm \ref{d:scaling}} gives scaling operators
\begin{gather*}
\mathcal{U}_{n^2}\colon \ M_{v,\bar v,\rho_{L}} \to M_{v, \bar v, \rho_{L(n^2)}}.
\end{gather*}
\end{Cor}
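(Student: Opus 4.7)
The plan is to leverage Lemma \ref{l:scatovv}, which characterizes vector-valued modularity via pairing with Siegel theta functions, together with the commutation identity in Lemma \ref{l:scaling}. The crucial observation upfront is that the rescaled lattice $L(n^2)$ has the same signature $(b^+, b^-)$ as $L$, so $\Theta_{L(n^2)}$ has weight $\bigl(\tfrac{b^+}{2}, \tfrac{b^-}{2}\bigr)$, and Lemma \ref{l:scatovv} can be applied to both sides of the argument with matching weight conventions.

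First, I would start from the hypothesis $\psi \in M_{v, \bar v, \rho_L}$. By the forward direction of Lemma \ref{l:scatovv}, the pairing $\langle \psi, \Theta_L \rangle(\tau, \alpha, \beta)$ is scalar-valued modular of weight $(w, \bar w) = \bigl(v + \tfrac{b^+}{2},\, \bar v + \tfrac{b^-}{2}\bigr)$. Next, I invoke the unlabeled lemma following Definition \ref{d:scalingscalar}, which guarantees that $U_{n^2}$ preserves scalar-valued modularity of the same weight. Consequently $U_{n^2}\bigl[\langle \psi, \Theta_L \rangle\bigr](\tau, \alpha, \beta)$ is again scalar-valued modular of weight $(w, \bar w)$.

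Then I would apply Lemma \ref{l:scaling} to rewrite this as
\[
U_{n^2}\bigl[\langle \psi, \Theta_L \rangle\bigr](\tau, \alpha, \beta) = \langle \mathcal{U}_{n^2}[\psi], \Theta_{L(n^2)} \rangle(\tau, \alpha, \beta),
\]
so the right-hand side is scalar-valued modular of weight $(w, \bar w)$. Finally, applying the converse direction of Lemma \ref{l:scatovv} to the lattice $L(n^2)$, I conclude that $\mathcal{U}_{n^2}[\psi](\tau)$ is vector-valued modular of weight $\bigl(w - \tfrac{b^+}{2},\, \bar w - \tfrac{b^-}{2}\bigr) = (v, \bar v)$ and type $\rho_{L(n^2)}$, which is precisely the claim.

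There is no substantial obstacle here: the corollary is a direct repackaging of Lemma \ref{l:scaling} with the scalar-valued modularity of $U_{n^2}[f]$, and the linear independence of the Siegel theta function components needed for the converse direction of Lemma \ref{l:scatovv} has already been established. The only subtlety worth flagging is that the converse direction must be invoked for $L(n^2)$ rather than $L$, which is legitimate precisely because lattice rescaling leaves the signature, and hence the weight of the Siegel theta function, unchanged.
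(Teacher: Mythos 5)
Your argument is correct and is essentially the paper's own (the paper dispatches this corollary as an immediate consequence of Lemma~\ref{l:scatovv}, the scalar-valued modularity of $U_{n^2}[f]$, and Lemma~\ref{l:scaling}, exactly the chain you spell out). Your observation that the rescaling preserves the signature, so the converse direction of Lemma~\ref{l:scatovv} applies to $L(n^2)$ with the same theta-weight, is the right point to flag and matches the paper's intent.
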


With this definition, we obtain the following theorem, analogous to Lemma~\ref{l:heckealgebra}.
\begin{Th}\label{t:heckealg1}
For $m$ and $n$ such that $\gcd(m,n)=1$,
\begin{gather*}
\mathcal{T}_m \circ \mathcal{T}_n = \mathcal{T}_{mn},
\end{gather*}
while for $l \geq 2$ and $p$ prime,
\begin{gather*}
\mathcal{T}_{p^{l}} = \mathcal{T}_{p} \circ \mathcal{T}_{p^{l-1}} - p^{w + \bar w -1} \mathcal{U}_{p^2} \circ \mathcal{T}_{p^{l-2}} .
\end{gather*}
\end{Th}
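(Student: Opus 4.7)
The plan is to reduce both identities to their scalar-valued counterparts in Lemma~\ref{l:heckealgebra} by pairing with Siegel theta functions, using Theorem~\ref{t:hecke} and Lemma~\ref{l:scaling} as bridges, and then to invert via the linear independence of the Siegel theta functions of the final rescaled lattice (the lemma following Remark~\ref{r:linearind}). In other words, the vector-valued algebra is forced by the scalar-valued algebra once we have the commutative square relating $\mathcal{T}_r$ to $T_r$ and $\mathcal{U}_{n^2}$ to $U_{n^2}$.

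For the coprime relation, fix $\psi \in M_{v,\bar v,\rho_L}$. By Corollary~\ref{c:modularity}, $\mathcal{T}_n[\psi]$ is vector-valued of type $\rho_{L(n)}$, so $\mathcal{T}_m \circ \mathcal{T}_n[\psi]$ is vector-valued of type $\rho_{L(n)(m)} = \rho_{L(mn)}$, matching the target type of $\mathcal{T}_{mn}[\psi]$. Two applications of Theorem~\ref{t:hecke} -- first with base lattice $L(n)$ to $\mathcal{T}_n[\psi]$, then with base lattice $L$ to $\psi$ -- rewrite $\Braket{\mathcal{T}_m \circ \mathcal{T}_n[\psi], \Theta_{L(mn)}}$ as $T_m \circ T_n\!\left[\Braket{\psi, \Theta_L}\right]$. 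The scalar-valued coprime relation of Lemma~\ref{l:heckealgebra} collapses this to $T_{mn}\!\left[\Braket{\psi, \Theta_L}\right]$, and a final application of Theorem~\ref{t:hecke} identifies it with $\Braket{\mathcal{T}_{mn}[\psi], \Theta_{L(mn)}}$. Linear independence of $\{\bar{\theta}_{L(mn)+\nu}\}_{\nu \in A(mn)}$ in the $\alpha$-variable then forces $\mathcal{T}_m \circ \mathcal{T}_n[\psi] = \mathcal{T}_{mn}[\psi]$ componentwise.

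The prime-power recursion follows the same template. Starting from $\Braket{\mathcal{T}_{p^l}[\psi], \Theta_{L(p^l)}} = T_{p^l}\!\left[\Braket{\psi, \Theta_L}\right]$, expand the right-hand side via the scalar-valued recursion of Lemma~\ref{l:heckealgebra} as $T_p \circ T_{p^{l-1}}\!\left[\Braket{\psi, \Theta_L}\right] - p^{w+\bar w -1}\, U_{p^2} \circ T_{p^{l-2}}\!\left[\Braket{\psi, \Theta_L}\right]$. Reassemble the first term via two applications of Theorem~\ref{t:hecke} (with base lattices $L$ and then $L(p^{l-1})$) into $\Braket{\mathcal{T}_p \circ \mathcal{T}_{p^{l-1}}[\psi], \Theta_{L(p^l)}}$, and the second term via Theorem~\ref{t:hecke} followed by Lemma~\ref{l:scaling} (with base lattice $L(p^{l-2})$) into $\Braket{\mathcal{U}_{p^2} \circ \mathcal{T}_{p^{l-2}}[\psi], \Theta_{L(p^l)}}$, using the identity $L(p^{l-2})(p^2) = L(p^l)$ so that the target lattices match. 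Linear independence of the components of $\Theta_{L(p^l)}$ then delivers the claimed identity.

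The only subtlety is bookkeeping: one must verify that Theorem~\ref{t:hecke} and Lemma~\ref{l:scaling} apply with an arbitrary rescaled lattice as the base. This is immediate, since $L(n)$ remains even, non-degenerate and integral for every positive integer $n$; rescalings compose as $L(n)(m) = L(mn)$; and the signature $(b^+, b^-)$ is unchanged by rescaling, so the shifted weight $(w, \bar w) = (v + b^+/2, \bar v + b^-/2)$ appearing in both the scalar-valued and vector-valued operators is the same at every stage. No new computation is required; the main obstacle is simply to track lattices, discriminant groups and weights carefully so that every pairing on the final step takes place in the same space $\mathbb{C}[A(mn)]$ (respectively $\mathbb{C}[A(p^l)]$), at which point the linear-independence argument closes the proof.
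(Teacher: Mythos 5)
Your proposal is correct and follows essentially the same route as the paper: the paper's proof likewise applies the scalar-valued relations of Lemma~\ref{l:heckealgebra} to $\Braket{\psi,\Theta_L}$ and then identifies the result via Theorem~\ref{t:hecke} and Lemma~\ref{l:scaling}, with the final identification of vector-valued objects resting (as in Lemma~\ref{l:scatovv}) on the linear independence of the Siegel theta components. Your version simply makes explicit the bookkeeping with rescaled base lattices $L(n)$, $L(p^{l-1})$, $L(p^{l-2})$ and the unchanged weight $(w,\bar w)$, which the paper leaves implicit.
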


\begin{proof}These two statements follow directly by applying the analogous statements from Lem\-ma~\ref{l:heckealgebra} to the scalar-valued $\Braket{\psi, \Theta_L }(\tau, \alpha, \beta)$ and then using the definition of our operators~$\mathcal{T}_n$ and~$\mathcal{U}_{n^2}$.
\end{proof}

\section[The $r=n^2$ case]{The $\boldsymbol{r=n^2}$ case}\label{s:n2}

We now specialize to Hecke operators $\mathcal{T}_r$ with $r = n^2$ for some positive integer $n$. What is special in this case is the existence of a sub-representation $\rho_L$ of the Weil representation $\rho_{L(n^2)}$ for the rescaled lattice $L(n^2)$. In Lemma~\ref{l:theta} we wrote a formula relating Siegel theta functions of a lattice $L$ in terms of theta functions of a rescaled lattice. Using the characterization of the Weil representation as the transformation law of theta series, we give below an embedding of~$\rho_L$ into~$\rho_{L(n^2)}$ and give a proof that is independent of the Siegel theta function properties.
This allows us to define projection operators $\mathcal{P}_{n^2}$, which are left inverses of the scaling operators~$\mathcal{U}_{n^2}$. We can use these projection operators to define new Hecke operators $\mathcal{H}_{n^2} = \mathcal{P}_{n^2} \circ \mathcal{T}_{n^2}\colon M_{v, \bar v, L} \to M_{v, \bar v, L}$ which take functions that are vector-valued modular of type $\rho_L$ to functions that are vector-valued modular of the same type.

\subsection{Weil sub-representation}

Let us start by proving the existence of a sub-representation $\rho_L$ of the Weil representation $\rho_{L(n^2)}$ for the rescaled lattice $L\big(n^2\big)$. Recall from Definition~\ref{d:weil} that the Weil representation $\rho_{L(n^2)}$ of $\mathrm{Mp}_2(\mathbb{Z})$ on $\mathbb{C}\big[A\big(n^2\big)\big]$ is defined by
\begin{gather*}%\label{weil}
\rho_{L(n^2)}(T)e_{\nu} = \e(q_{n^2}(\nu)) e_{\nu}, \\
\rho_{L(n^2)}(S)e_{\nu} =\frac{\e(-\mathrm{sgn}(L)/8)}{\sqrt{|A(n^2)|}} \sum_{\mu \in A(n^2)} \e(-(\nu, \mu)_{n^2}) e_{\mu},
\end{gather*}
where $\{ e_\nu \}_{\nu \in A(n^2)}$ is the standard basis for the vector space $\mathbb{C}[A(n^2)]$, and $S$ and $T$ are the generators of~$\mathrm{Mp}_2(\mathbb{Z})$.

Consider the subspace $\mathbb{C}[A] \subseteq \mathbb{C}\big[A\big(n^2\big)\big]$ spanned by the basis vectors $\{f_\lambda \}_{\lambda \in A}$ defined by
\begin{gather*}
f_\lambda =\frac{1}{n^{\dim(L)}}\sum_{\substack{\nu \in A(n) \subseteq A(n^2) \\ n \nu = \lambda}} e_\nu.
\end{gather*}
The $\{f_\lambda \}_{\lambda \in A}$ form the standard basis for $\mathbb{C}[A]$. Indeed, one sees that $f_\lambda f_\delta = f_{\lambda +\delta}$:
\begin{align*}
f_\lambda f_\delta & = \frac{1}{n^{2 \dim(L)}}\sum_{\substack{\nu \in A(n) \\ n \nu = \lambda}} \sum_{\substack{\mu \in A(n) \\ n \mu = \delta}} e_\mu e_\nu
= \frac{1}{n^{2 \dim (L)}}\sum_{\substack{\nu \in A(n) \\ n \nu = \lambda}} \sum_{\substack{\mu \in A(n) \\ n \mu = \delta}} e_{\mu + \nu} \\
& = \frac{1}{n^{2 \dim (L)}}\sum_{\substack{\alpha \in A(n) \\ n \alpha = \lambda + \delta}} e_\alpha \left(\sum_{\substack{\mu \in A(n) \\ n \mu = \delta}} 1 \right)
= \frac{1}{n^{ \dim(L)}}\sum_{\substack{\alpha \in A(n) \\ n \alpha = \lambda + \delta}} e_\alpha
= f_{\lambda + \delta},
\end{align*}
since
\begin{gather}\label{eq:usefulsum}
\sum_{\substack{\mu \in A(n)\\ n \mu = \delta}} 1 =\left| \frac{1}{n}L/L \right|= n^{\dim (L)}.
\end{gather}

We prove the following important lemma.

\begin{lem}\label{l:subrep}The restriction of $\rho_{L(n^2)}$ to the subspace $\mathbb{C}[A]$ is the Weil representation $\rho_L$:
\begin{gather*}
\rho_{L(n^2)} \big|_{\mathbb{C}[A]} = \rho_L.
\end{gather*}
In other words,
\begin{gather*}%\label{weil}
\rho_{L(n^2)}(T)f_{\lambda} = \e(q(\lambda)) f_{\lambda} = \rho_L(T) (f_\lambda),\\
\rho_{L(n^2)}(S)f_{\lambda} =\frac{\e(-\mathrm{sgn}(L)/8)}{\sqrt{|A|}} \sum_{\gamma \in A} \e(-(\lambda, \gamma))\,f_{\lambda} = \rho_{L}(S) (f_\lambda).
\end{gather*}
\end{lem}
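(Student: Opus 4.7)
The plan is to verify the two identities directly on the generators $T$ and $S$, unpacking the definition of $f_\lambda$ and using character orthogonality. Throughout, I identify $A(r)$ with $\frac{1}{r}L'/L$, so that the multiplication-by-$n$ map $A(n)\to A$ sending $\nu \mapsto n\nu$ makes literal sense inside $L\otimes\mathbb{Q}/L$.

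First, for the $T$-action I would apply $\rho_{L(n^2)}(T)$ term-by-term to the definition of $f_\lambda$. The only point to check is that $\e(q_{n^2}(\nu))$ is constant on the sum. Writing a representative as $\nu = \frac{1}{n}x+L$ with $x+L=\lambda$, a direct calculation gives
\begin{equation*}
q_{n^2}(\nu) = \tfrac{n^2}{2}(\nu,\nu) = \tfrac{1}{2}(x,x) = q(\lambda) \pmod{\mathbb{Z}},
\end{equation*}
so the factor pulls out of the sum and yields $\e(q(\lambda))\,f_\lambda = \rho_L(T)f_\lambda$.

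For the $S$-action, applying $\rho_{L(n^2)}(S)$ to $f_\lambda$ and using $|A(n^2)| = n^{2\dim(L)}|A|$ gives
\begin{equation*}
\rho_{L(n^2)}(S)f_\lambda = \frac{\e(-\mathrm{sgn}(L)/8)}{n^{2\dim(L)}\sqrt{|A|}} \sum_{\mu\in A(n^2)} \Bigg(\sum_{\substack{\nu\in A(n)\\ n\nu=\lambda}} \e(-(\nu,\mu)_{n^2})\Bigg) e_\mu .
\end{equation*}
To evaluate the inner sum I would fix one lift $\nu_0$ with $n\nu_0=\lambda$ and parameterise the other solutions as $\nu=\nu_0+\delta$ with $\delta$ ranging over the kernel of $n\colon A(n)\to A$, which is precisely $\frac{1}{n}L/L$. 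Writing $\delta=\frac{1}{n}y+L$ with $y\in L$ and $\mu=\frac{1}{n^2}z+L$ with $z\in L'$, one has $n^2(\delta,\mu) = \tfrac{1}{n}(y,z)\pmod{\mathbb{Z}}$, so the inner sum factorises as $\e(-(\nu_0,\mu)_{n^2})\sum_{y\in L/nL}\e(-\tfrac{1}{n}(y,z))$. The heart of the argument is the orthogonality identity
\begin{equation*}
\sum_{y\in L/nL} \e\!\left(-\tfrac{1}{n}(y,z)\right) = \begin{cases} n^{\dim(L)} & z\in nL',\\ 0 & \text{otherwise},\end{cases}
\end{equation*}
whose nonvanishing condition is exactly $\mu\in A(n)\subseteq A(n^2)$.

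Substituting this back collapses the outer sum over $A(n^2)$ to one over $A(n)$, and rewriting $n^2(\nu_0,\mu) = (n\nu_0,n\mu) = (\lambda,\gamma)$ with $\gamma=n\mu\in A$ gives
\begin{equation*}
\rho_{L(n^2)}(S)f_\lambda = \frac{\e(-\mathrm{sgn}(L)/8)}{n^{\dim(L)}\sqrt{|A|}} \sum_{\substack{\mu\in A(n)\\ n\mu=\gamma}} \sum_{\gamma\in A} \e(-(\lambda,\gamma))\, e_\mu,
\end{equation*}
which upon regrouping is precisely $\frac{\e(-\mathrm{sgn}(L)/8)}{\sqrt{|A|}}\sum_{\gamma\in A}\e(-(\lambda,\gamma))f_\gamma = \rho_L(S)f_\lambda$, using also~\eqref{eq:usefulsum}. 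Since $S$ and $T$ generate $\mathrm{Mp}_2(\mathbb{Z})$, this proves $\rho_{L(n^2)}|_{\mathbb{C}[A]}=\rho_L$.

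The main obstacle is the character-sum computation above: one must correctly identify the kernel of $n\colon A(n)\to A$ and track the various lattice rescalings when translating $(\nu,\mu)_{n^2}$ into a pairing between $L/nL$ and $L'/nL'$, so that Pontryagin duality produces the indicator function of $A(n)\subseteq A(n^2)$ with the right normalisation.
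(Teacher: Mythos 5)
Your proposal is correct and follows essentially the same route as the paper: verify the action of the generators $T$ and $S$ on the $f_\lambda$, with the key step being that the inner sum over the fiber $\{\nu\in A(n)\colon n\nu=\lambda\}$ vanishes unless $\mu\in A(n)\subseteq A(n^2)$ and otherwise contributes $n^{\dim(L)}$. Your explicit character-orthogonality computation over $L/nL$ is just a coordinatized version of the paper's shift-by-$\beta\in\frac{1}{n}L/L$ relabeling argument, and all normalizations in your write-up check out.
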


\begin{proof}Let us begin with the $T$ transformation
\begin{align*}
\rho_{L(n^2)}(T)f_{\lambda} &=\frac{1}{n^{\dim (L)}}\sum_{\substack{\nu \in A(n) \\ n \nu = \lambda}} \rho_{L(n^2)}(T)(e_\nu)
= \frac{1}{n^{\dim(L)}} \sum_{\substack{\nu \in A(n)\\ n \nu = \lambda}} \e(q_{n^2}(\nu))\, e_{\nu} \\
& =\frac{1}{n^{\dim(L)}}\e(q(\lambda)) \sum_{\substack{\nu \in A(n) \\ n \nu = \lambda}} e_\nu =\e(q(\lambda)) f_\lambda.
\end{align*}
As for the $S$ transformation,
\begin{align*}
\rho_{L(n^2)}(S)f_{\lambda}& =\frac{1}{n^{\dim(L)}} \sum_{\substack{\nu \in A(n) \\ n \nu = \lambda}} \rho_{L(n^2)}(S)(e_\nu) \\
& = \frac{1}{n^{\dim(L)}} \frac{\e(-\mathrm{sgn}(L)/8)}{\sqrt{|A(n^2)|}} \sum_{\substack{\nu \in A(n) \\ n \nu = \lambda}} \sum_{\mu \in A(n^2)} \e(-(\nu, \mu)_{n^2})\,e_{\mu}.
\end{align*}
Now consider the sum $\sum\limits_{\substack{\nu \in A(n) \\ n \nu = \lambda}} \e(-(\nu, \mu)_{n^2})$. We can do a shift $\nu \mapsto \nu + \beta$ for any $\beta \in \frac{1}{n} L/L$. It should not change the sum, since if $n \nu = \lambda$, then $n (\nu + \beta) = \lambda$, and hence it only amounts to relabeling the summands. Thus for all $\beta \in \frac{1}{n} L / L$, we must have:
\begin{gather*}
\sum_{\substack{\nu \in A(n) \\ n \nu = \lambda}} \e(-(\nu, \mu)_{n^2}) = \e \left( - (\beta, \mu)_{n^2} \right) \sum_{\substack{\nu \in A(n) \\ n \nu = \lambda}} \e(-(\nu, \mu)_{n^2}).
\end{gather*}
This implies that either the summation over $\nu$ is zero, or $\e \left( - (\beta, \mu)_{n^2} \right) = 1$ for all $\beta \in \frac{1}{n} L/L$, which will be the case if $\mu \in A(n) \subseteq A\big(n^2\big)$.
Thus we conclude that the summation over $\nu$ is zero whenever $\mu \notin A(n) \subseteq A\big(n^2\big)$. As a result, we get
\begin{align*}
\rho_{L(n^2)}(S)f_{\lambda} & =\frac{1}{n^{\dim(L)}}\frac{\e(-\mathrm{sgn}(L)/8)}{\sqrt{|A(n^2)|}} \sum_{\substack{\nu \in A(n) \\ n \nu = \lambda}} \sum_{\mu \in A(n)} \e(-(\nu, \mu)_{n^2})\,e_{\mu}\\ \nonumber
&=\frac{1}{n^{\dim(L)}}\frac{\e(-\mathrm{sgn}(L)/8)}{\sqrt{|A(n^2)|}} \sum_{\substack{\nu \in A(n) \\ n \nu = \lambda}} \sum_{\mu \in A(n)} \e(-(n \nu, n \mu))\,e_{\mu}\\ \nonumber
&=\frac{1}{n^{\dim(L)}}\frac{\e(-\mathrm{sgn}(L)/8)}{\sqrt{|A(n^2)|}} \left| \frac{1}{n}L / L \right| \sum_{\mu \in A(n)} \e(-(\lambda, n \mu))\,e_{\mu}\\ \nonumber
&=\frac{1}{n^{\dim(L)}}\frac{\e(-\mathrm{sgn}(L)/8)}{\sqrt{|A|}} \sum_{\delta \in A} \e(-(\lambda, \delta))\sum_{\substack{\mu \in A(n) \\ n \mu = \delta}} e_{\mu}\\
&= \frac{\e(-\mathrm{sgn}(L)/8)}{\sqrt{|A|}} \sum_{\delta \in A} \e(-(\lambda, \delta)) f_\delta.\tag*{\qed}
\end{align*}\renewcommand{\qed}{}
\end{proof}

\subsection{Projection operators}

The existence of the sub-representation given in Lemma~\ref{l:subrep} allows us to define projection ope\-ra\-tors $ \mathcal{P}_{n^2}\colon M_{v,\bar v,L(n^2)} \to M_{v, \bar v,L}$.

\begin{Def}\label{d:projection} Let $\psi(\tau)=\sum\limits_{\nu \in A(n^2)} \psi_{\nu}(\tau) \, e_{\nu}$ be vector-valued modular of type $\rho_{L(n^2)}$. We define the projection operators $\mathcal{P}_{n^2}$ by
\begin{gather*}
\mathcal{P}_{n^2}[\psi](\tau) =\frac{1}{n^{\dim(L)}} \sum_{\lambda \in A} \Bigg( \sum_{\substack{\gamma \in A(n) \\ n \gamma = \lambda}} \psi_{\gamma}(\tau)\Bigg)e_{\lambda}
=\frac{1}{n^{\dim(L)}} \sum_{\lambda \in A} \Bigg( \sum_{\substack{\gamma \in A(n^2) \\ n \gamma = \lambda}} \Delta_{n^2}(\gamma,n)\psi_{\gamma}(\tau)\Bigg)e_{\lambda},%\label{eq:pin2}
\end{gather*}
with $\Delta_{n^2}(\gamma,n)$ defined in Definition~\ref{d:delta}.
\end{Def}

\begin{Rem}\label{r:proj}The projection operator $\mathcal{P}_{n^2}$ appears in \cite[Proposition 3.2]{bru} as the `arrow-down' operator $f\downarrow^A_H$ and the `averaging operator' $ \mathcal{A}$ on rank~1 Jacobi forms in \cite[p.~51]{ez}.
\end{Rem}

As a direct corollary of Lemma~\ref{l:subrep} we get:
\begin{Cor}\label{c:projection} Let $\psi(\tau)=\sum\limits_{\nu \in A(n^2)} \psi_{\nu}(\tau) \, e_{\nu}$ be vector-valued modular of type $\rho_{L(n^2)}$. Then $\mathcal{P}_{n^2}[\psi](\tau)$ is vector-valued modular of type $\rho_{L}$ of the same weight. In other words, Definition~{\rm \ref{d:projection}} gives projection operators
\begin{gather*}
\mathcal{P}_{n^2}\colon \ M_{v,\bar v,L(n^2)} \to M_{v, \bar v,L}.
\end{gather*}
\end{Cor}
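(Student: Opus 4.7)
The plan is to realize $\mathcal{P}_{n^2}$ as a pointwise application of an $\mathrm{Mp}_2(\mathbb{Z})$-equivariant linear map from $\mathbb{C}[A(n^2)]$ to $\mathbb{C}[A]$; equivariance of that map then delivers the transformation law for free, and because the map is applied pointwise in $\tau$, the weight $(v,\bar v)$ is preserved. The whole argument rides on Lemma~\ref{l:subrep} plus unitarity of the Weil representation, which is exactly why the authors advertise this as a direct corollary.

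First, I would use Lemma~\ref{l:subrep} to identify the subspace $W := \mathrm{span}_{\mathbb{C}}\{f_\lambda\}_{\lambda \in A} \subseteq \mathbb{C}[A(n^2)]$ with $\mathbb{C}[A]$ via $f_\lambda \leftrightarrow e_\lambda$, so that $\rho_{L(n^2)}|_W = \rho_L$. Because $\rho_{L(n^2)}$ is unitary with respect to the inner product~\eqref{eq:unitary}, the orthogonal complement $W^\perp$ is automatically $\mathrm{Mp}_2(\mathbb{Z})$-stable as well, and the orthogonal projection onto $W$, post-composed with the identification $W \cong \mathbb{C}[A]$, yields an equivariant map $\pi\colon \mathbb{C}[A(n^2)] \to \mathbb{C}[A]$.

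Next I would make $\pi$ explicit on the basis $\{e_\nu\}_{\nu\in A(n^2)}$. Using the identity~\eqref{eq:usefulsum}, one easily computes $\langle e_\nu, f_\lambda\rangle = n^{-\dim(L)}$ when $\nu \in A(n)$ and $n\nu = \lambda$ (and zero otherwise), together with $\langle f_\lambda, f_\delta\rangle = n^{-\dim(L)}\,\delta_{\lambda,\delta}$. Consequently the orthogonal projection sends $e_\nu$ to $f_{n\nu}$ for $\nu \in A(n)$ and to $0$ otherwise, so $\pi(e_\nu) = \Delta_{n^2}(\nu,n)\, e_{n\nu}$. Applying $\pi$ pointwise to $\psi(\tau) = \sum_{\nu \in A(n^2)} \psi_\nu(\tau)\, e_\nu$ then gives
\begin{gather*}
(\pi \circ \psi)(\tau) = \sum_{\lambda \in A}\Bigg(\sum_{\substack{\gamma \in A(n^2) \\ n\gamma = \lambda}} \Delta_{n^2}(\gamma, n)\, \psi_\gamma(\tau)\Bigg) e_\lambda = n^{\dim(L)}\,\mathcal{P}_{n^2}[\psi](\tau),
\end{gather*}
so $\mathcal{P}_{n^2}[\psi] = n^{-\dim(L)}(\pi \circ \psi)$. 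Since $\pi$ is $\mathrm{Mp}_2(\mathbb{Z})$-equivariant and the automorphy factor $\phi_M(\tau)^{2v}\overline{\phi_M(\tau)}^{2\bar v}$ is a scalar independent of the basis vector, the $\rho_{L(n^2)}$-transformation law of $\psi$ passes through pointwise to yield the desired $\rho_L$-transformation law of $\mathcal{P}_{n^2}[\psi]$ with the same weight.

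There is no real obstacle: the only bookkeeping is the scalar $n^{-\dim(L)}$ coming from the normalization of $f_\lambda$, and the care needed to distinguish the two incarnations of $\{e_\lambda\}_{\lambda \in A}$—as the abstract basis of $\mathbb{C}[A]$ and as $\{f_\lambda\}$ sitting inside $\mathbb{C}[A(n^2)]$—when invoking Lemma~\ref{l:subrep}. Everything else is forced by the unitary-representation dichotomy $\mathbb{C}[A(n^2)] = W \oplus W^\perp$.
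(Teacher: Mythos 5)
Your argument is correct and follows the route the paper intends: the statement is presented there as a direct consequence of Lemma~\ref{l:subrep}, and your use of that lemma together with unitarity~\eqref{eq:unitary} to build the equivariant orthogonal projection onto $\mathrm{span}\{f_\lambda\}$, which you correctly identify with $n^{\dim(L)}\mathcal{P}_{n^2}$ on the basis $\{e_\nu\}$, simply makes the omitted details explicit. The bookkeeping (the inner products $\langle e_\nu,f_\lambda\rangle$, $\langle f_\lambda,f_\delta\rangle$ and the scalar $n^{-\dim(L)}$) checks out, and weight preservation follows as you say because $\pi$ acts pointwise.
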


We now show that the projection operators $\mathcal{P}_{n^2}$ are left inverses of the scaling operators $\mathcal{U}_{n^2}$.

\begin{lem}\label{l:inverses}
\begin{gather*}
\mathcal{P}_{n^2} \circ \mathcal{U}_{n^2}= \mathcal{I},
\end{gather*}
where $\mathcal{I}$ is the identity operator.
\end{lem}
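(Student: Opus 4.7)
My plan is a direct unwinding of the two definitions, combined with the counting identity already proved in \eqref{eq:usefulsum}. The only ``content'' in the statement is bookkeeping about cosets of $A(n)$ inside $A(n^2)$, so there is no serious obstacle.

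First, I would start with an arbitrary $\psi(\tau)=\sum_{\lambda \in A}\psi_\lambda(\tau)\,e_\lambda \in M_{v,\bar v,L}$ and apply $\mathcal{U}_{n^2}$ by Definition~\ref{d:scaling}. The components of $\mathcal{U}_{n^2}[\psi](\tau)$ are indexed by $\nu \in A(n^2)$, and the component along $e_\nu$ is $\Delta_{n^2}(\nu,n)\psi_{n\nu}(\tau)$, which is nonzero only when $\nu \in A(n)\subseteq A(n^2)$.

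Next, I would apply $\mathcal{P}_{n^2}$ using Definition~\ref{d:projection}. Substituting the above into the formula gives
\begin{gather*}
\mathcal{P}_{n^2}\!\circ\!\mathcal{U}_{n^2}[\psi](\tau)
= \frac{1}{n^{\dim(L)}}\sum_{\lambda \in A}\Bigg(\sum_{\substack{\gamma \in A(n)\\ n\gamma=\lambda}} \psi_{n\gamma}(\tau)\Bigg) e_\lambda,
\end{gather*}
since on $A(n)$ the factor $\Delta_{n^2}(\gamma,n)$ equals $1$. Under the constraint $n\gamma=\lambda$ we have $\psi_{n\gamma}(\tau)=\psi_\lambda(\tau)$, so the inner sum reduces to $\psi_\lambda(\tau)$ multiplied by $\big|\{\gamma \in A(n):n\gamma=\lambda\}\big|$. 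This cardinality equals $\big|\tfrac{1}{n}L/L\big|=n^{\dim(L)}$ by the identity \eqref{eq:usefulsum} already established in the proof that $\{f_\lambda\}_{\lambda \in A}$ forms the standard basis of $\mathbb{C}[A]$.

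Finally, the prefactor $n^{-\dim(L)}$ cancels the counting factor $n^{\dim(L)}$, leaving $\sum_{\lambda \in A}\psi_\lambda(\tau)\,e_\lambda=\psi(\tau)$, which proves $\mathcal{P}_{n^2}\circ\mathcal{U}_{n^2}=\mathcal{I}$. There is no genuinely difficult step here; the only thing to be careful about is matching the index conventions (in particular, that $\mathcal{U}_{n^2}$ produces a form supported on $A(n)\subseteq A(n^2)$, which is exactly the support over which $\mathcal{P}_{n^2}$ averages), so that the two operators interact cleanly through the coset-counting identity \eqref{eq:usefulsum}.
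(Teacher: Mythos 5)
Your proposal is correct and follows essentially the same route as the paper's proof: unwind Definitions~\ref{d:scaling} and~\ref{d:projection}, observe that the constraint $n\gamma=\lambda$ forces $\psi_{n\gamma}=\psi_\lambda$, and cancel the prefactor $n^{-\dim(L)}$ against the fiber count from \eqref{eq:usefulsum}. Nothing is missing.
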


\begin{proof}Let $\psi(\tau)$ be vector-valued modular of type $\rho_L$. We have
\begin{align*}
\mathcal{P}_{n^2} \circ \mathcal{U}_{n^2} [\psi](\tau)& = \mathcal{P}_{n^2} \Bigg(\sum_{\nu \in A(n^2) } \Delta_{n^2}(\nu, n) \psi_{n \nu}(\tau) e_\nu \Bigg)
= \frac{1}{n^{\dim(L)}} \sum_{\lambda \in A} \Bigg(\sum_{\substack{\gamma \in A(n) \\ n \gamma = \lambda}} \psi_{n \gamma}(\tau) \Bigg) e_\lambda \\
& = \frac{1}{n^{\dim(L)}} \sum_{\lambda \in A}\Bigg(\sum_{\substack{\gamma \in A(n) \\ n \gamma = \lambda}} 1 \Bigg) \psi_\lambda(\tau) e_\lambda.
\end{align*}
The sum in bracket was evaluated in \eqref{eq:usefulsum}, and is equal to $n^{\dim(L)}$. Thus we get
\begin{gather*}
\mathcal{P}_{n^2} \circ \mathcal{U}_{n^2} [\psi](\tau)=\sum_{\lambda \in A} \psi_\lambda(\tau) e_\lambda.\tag*{\qed}
\end{gather*}\renewcommand{\qed}{}
\end{proof}

\begin{Rem}In general it is not true that
\begin{gather}\label{eq:commute2}
\mathcal{U}_{n^2} \circ \mathcal{P}_{n^2}=\mathcal{I}.
\end{gather}
However, \eqref{eq:commute2} holds for vector-valued modular forms $\psi(\tau)=\sum\limits_{\lambda \in A(n^2)}\psi_{\lambda}(\tau)e_{\lambda}$ that are not supported on $\frac{1}{n}L/L \subset A\big(n^2\big)$ (so that $\psi_{\lambda}(\tau)=0$ for $\lambda \in \frac{1}{n}L/L$). In particular if $\psi(\tau)=\sum\limits_{\lambda \in A(n^2)} \psi_{\lambda}(\tau)e_{\lambda}$ is a vector-valued modular form supported on~$A(n)$ so that $\psi_{\lambda}(\tau) =0$ whenever $\lambda \notin A(n)$, then $\mathcal{U}_{n^2} \circ \mathcal{P}_{n^2}=\mathcal{I}$. This is analogous to Proposition~3.33 in~\cite{bru}.
\end{Rem}

\subsection[Hecke operators $\mathcal{H}_{n^2}$]{Hecke operators $\boldsymbol{\mathcal{H}_{n^2}}$}

We can now compose our Hecke operators $\mathcal{T}_{n^2}$ with the projection operators $\mathcal{P}_{n^2}$ to get Hecke operators $\mathcal{H}_{n^2}\colon M_{v,\bar v,L} \to M_{v, \bar v,L}$. These mirror the Hecke operators $T_n\colon J_{k,m} \to J_{k,m}$ of Eichler--Zagier constructed as a sum over right cosets of the Jacobi group. The operators $T_n$ were also shown to be a composition of the operators $V_{n^2}\colon J_{k,m} \to J_{k,m}$ and the averaging operator $\mathcal{A}$ (see Remark~\ref{r:proj}) after imposing a certain restriction on the sum over cosets. In the following two sections, one can observe many similarities between the Hecke operators of this paper and the ones considered by Eichler--Zagier.

\begin{Def}\label{d:hecke2}We define the Hecke operators
\begin{gather*}
\mathcal{H}_{n^2}:= \mathcal{P}_{n^2} \circ \mathcal{T}_{n^2}\colon \ M_{v,\bar v,L} \to M_{v, \bar v,L}.
\end{gather*}
\end{Def}
We can give an explicit formula for the components of $\mathcal{H}_{n^2}[\psi](\tau)$.
\begin{Prop}\label{p:Ytau}
Let $\psi(\tau) = \sum\limits_{\lambda \in A} \psi_{\lambda}(\tau) e_{\lambda}$ be vector-valued modular of type $\rho_L$ and weight $(v,\bar v)$. Then $\mathcal{H}_{n^2}[\psi](\tau)$ is also vector-valued modular of type $\rho_L$ and weight $(v, \bar v)$, and can be written as
\begin{gather*}
\mathcal{H}_{n^2}[\psi](\tau)=n^{2(v+\bar v-1)}
\times \sum_{\lambda \in A} \Bigg(\sum_{\substack{\gamma \in A(n^2)\\ n \gamma = \lambda}} \sum_{\substack{k, l >0 \\ k l = n^2}} \frac{1}{l^{v + \bar v + \frac{1}{2} \dim(L) }} \\
\hphantom{\mathcal{H}_{n^2}[\psi](\tau)=}{}\times \sum_{s=0}^{l-1} \Delta_{n^2}(\gamma,n) \Delta_{n^2}(\gamma,k) \e\left( - \frac{s}{k} q_{n^2} (\gamma) \right) \psi_{l \gamma} \left( \frac{k \tau + s}{l} \right) \Bigg) e_\lambda.
\end{gather*}
\end{Prop}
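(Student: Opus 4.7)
The proof is essentially a direct computation: combine the definitions of $\mathcal{T}_{n^2}$ (Definition~\ref{d:hecke1}) and $\mathcal{P}_{n^2}$ (Definition~\ref{d:projection}), then simplify. I will split it into a modularity statement and an explicit formula computation.

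First, the claim that $\mathcal{H}_{n^2}[\psi](\tau)$ is vector-valued modular of type $\rho_L$ and weight $(v,\bar v)$ is essentially free: by Corollary~\ref{c:modularity}, $\mathcal{T}_{n^2}[\psi](\tau)$ is vector-valued modular of weight $(v,\bar v)$ and type $\rho_{L(n^2)}$; then Corollary~\ref{c:projection} tells us that applying $\mathcal{P}_{n^2}$ yields a vector-valued modular form of type $\rho_L$ of the same weight. Since $\mathcal{H}_{n^2}=\mathcal{P}_{n^2}\circ\mathcal{T}_{n^2}$ by Definition~\ref{d:hecke2}, this immediately gives the desired modularity.

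For the explicit formula, I would write, using Definition~\ref{d:hecke1} with $r=n^2$, the $\gamma$-th component of $\mathcal{T}_{n^2}[\psi](\tau)$ for $\gamma \in A(n^2)$ as
\begin{gather*}
\big(\mathcal{T}_{n^2}[\psi]\big)_\gamma(\tau) = n^{2(w+\bar w)-2}\sum_{\substack{k,l>0\\kl=n^2}}\frac{1}{l^{w+\bar w}}\sum_{s=0}^{l-1}\Delta_{n^2}(\gamma,k)\,\e\!\left(-\tfrac{s}{k}q_{n^2}(\gamma)\right)\psi_{l\gamma}\!\left(\tfrac{k\tau+s}{l}\right).
\end{gather*}
Plugging this into the expression for $\mathcal{P}_{n^2}$ from Definition~\ref{d:projection} produces
\begin{gather*}
\mathcal{H}_{n^2}[\psi](\tau) = \tfrac{n^{2(w+\bar w)-2}}{n^{\dim(L)}}\sum_{\lambda\in A}\Bigg(\sum_{\substack{\gamma\in A(n^2)\\ n\gamma=\lambda}}\sum_{\substack{k,l>0\\kl=n^2}}\frac{1}{l^{w+\bar w}}\sum_{s=0}^{l-1}\Delta_{n^2}(\gamma,n)\Delta_{n^2}(\gamma,k)\,\e\!\left(-\tfrac{s}{k}q_{n^2}(\gamma)\right)\psi_{l\gamma}\!\left(\tfrac{k\tau+s}{l}\right)\Bigg)e_\lambda.
\end{gather*}

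The final step is bookkeeping on the exponents. Using the relation $(w,\bar w)=\big(v+\tfrac{b^+}{2},\bar v+\tfrac{b^-}{2}\big)$ from Definition~\ref{d:hecke1}, we have $w+\bar w = v+\bar v+\tfrac{1}{2}\dim(L)$. Hence
\begin{gather*}
\frac{n^{2(w+\bar w)-2}}{n^{\dim(L)}}=n^{2(v+\bar v)+\dim(L)-2-\dim(L)}=n^{2(v+\bar v-1)},
\end{gather*}
and $l^{w+\bar w}=l^{v+\bar v+\frac{1}{2}\dim(L)}$. Substituting these simplifications yields exactly the stated expression. There is no real obstacle here — the only thing to be slightly careful about is tracking the two $\Delta_{n^2}$ factors (one from $\mathcal{T}_{n^2}$, the other from the image of $\mathcal{P}_{n^2}$ restricting the inner sum to $\gamma\in A(n)\subseteq A(n^2)$) and making sure the exponents of $n$ match after cancelling the $n^{\dim(L)}$ normalization of the projection against the $n^{2(w+\bar w)-2}$ prefactor of the Hecke operator.
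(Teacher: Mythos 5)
Your proposal is correct and follows the same route as the paper, whose proof simply states that the formula follows directly from Definitions~\ref{d:hecke1} and \ref{d:projection}; you have just carried out that bookkeeping explicitly, including the exponent identity $w+\bar w = v+\bar v+\tfrac12\dim(L)$ and the cancellation of the $n^{\dim(L)}$ normalization, and the modularity part via Corollaries~\ref{c:modularity} and \ref{c:projection} is likewise what the paper intends.
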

\begin{proof}
This follows directly from Definitions \ref{d:hecke1} and \ref{d:projection}.
\end{proof}

\subsection[Algebraic relations satisfied by the Hecke operators $\mathcal{H}_{n^2}$]{Algebraic relations satisfied by the Hecke operators $\boldsymbol{\mathcal{H}_{n^2}}$}\label{prop}

In the previous section, we proved Theorem~\ref{t:heckealg1} for the Hecke operators $\mathcal{T}_{r}$. We now study similiar recursion relations for the operators~$\mathcal{H}_{n^2}$. The statements and their proofs in this section mirror analogous results obtained by Eichler--Zagier on Jacobi forms in \cite[Section~I.4]{ez}.

We first need the following lemmas.

\begin{lem} \label{l:commUT}For any positive integers $m$ and $n$,
\begin{gather*}
\mathcal{U}_{n^2} \circ \mathcal{T}_{m^2} = \mathcal{T}_{m^2} \circ \mathcal{U}_{n^2}.
\end{gather*}
\end{lem}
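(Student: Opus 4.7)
The plan is to reduce the vector-valued statement to the analogous scalar-valued commutativity by pairing with Siegel theta functions, exactly in the spirit of the proof of Theorem~\ref{t:heckealg1}. First I would observe that both compositions are well-defined maps $M_{v,\bar v, L} \to M_{v,\bar v, L(m^2 n^2)}$: the Hecke operator $\mathcal{T}_{m^2}$ followed by the scaling $\mathcal{U}_{n^2}$ factors through $M_{v,\bar v, L(m^2)}$, while the reverse composition factors through $M_{v,\bar v, L(n^2)}$, and in either case the final lattice is $L(m^2)(n^2) = L(m^2 n^2) = L(n^2)(m^2)$.

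Second, I would verify the analogous identity for the classical scalar-valued operators: for any $f(\tau,\alpha,\beta)$ of weight $(w,\bar w)$,
\begin{gather*}
U_{n^2} \circ T_{m^2}[f](\tau,\alpha,\beta) = T_{m^2} \circ U_{n^2}[f](\tau,\alpha,\beta).
\end{gather*}
This is a direct unwinding of Definition~\ref{d:scalingscalar} and equation~\eqref{eq:heckejacobi}: both sides come out equal to
\begin{gather*}
m^{2(w+\bar w -1)} \sum_{\substack{k,l>0\\ kl=m^2}} l^{-w-\bar w}\sum_{s=0}^{l-1} f\!\left(\tfrac{k\tau+s}{l},\, nk\alpha+ns\beta,\, nl\beta\right),
\end{gather*}
since $U_{n^2}$ simply multiplies the two Jacobi-like arguments by $n$ and this commutes with the substitution $(\alpha,\beta)\mapsto (k\alpha+s\beta, l\beta)$.

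Third, I would chain together Theorem~\ref{t:hecke} and Lemma~\ref{l:scaling} applied alternately, using the scalar commutativity from the previous step in the middle:
\begin{align*}
\Braket{\mathcal{U}_{n^2}\circ \mathcal{T}_{m^2}[\psi],\, \Theta_{L(m^2 n^2)}}
&= U_{n^2}\!\left[\Braket{\mathcal{T}_{m^2}[\psi],\, \Theta_{L(m^2)}}\right]
 = U_{n^2}\circ T_{m^2}\!\left[\Braket{\psi,\, \Theta_L}\right]\\
&= T_{m^2}\circ U_{n^2}\!\left[\Braket{\psi,\, \Theta_L}\right]
 = T_{m^2}\!\left[\Braket{\mathcal{U}_{n^2}[\psi],\, \Theta_{L(n^2)}}\right]\\
&= \Braket{\mathcal{T}_{m^2}\circ \mathcal{U}_{n^2}[\psi],\, \Theta_{L(m^2 n^2)}}.
\end{align*}
Finally, the linear independence of the Siegel theta functions $\{\theta_{L(m^2 n^2)+\gamma}\}$ in the variable $\alpha$ (established in the lemma preceding Lemma~\ref{l:theta}) forces the $\mathbb{C}[A(m^2 n^2)]$-components of $\mathcal{U}_{n^2}\circ\mathcal{T}_{m^2}[\psi]$ and $\mathcal{T}_{m^2}\circ\mathcal{U}_{n^2}[\psi]$ to coincide, yielding the claimed identity.

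There is no real obstacle here beyond bookkeeping with the rescaled lattices; the only place that requires minor care is tracking which Siegel theta function appears at each step of the chain above, and confirming that $L(m^2)(n^2)$ and $L(n^2)(m^2)$ really are the same lattice $L(m^2 n^2)$ so that the inner product on $\mathbb{C}[A(m^2 n^2)]$ on both ends of the chain is the same.
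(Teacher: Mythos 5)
Your proposal is correct and follows essentially the same route as the paper: the paper's (very terse) proof likewise applies the scalar-valued commutativity of $U_{n^2}$ and $T_{m^2}$ to the pairing $\Braket{\psi,\Theta_L}(\tau,\alpha,\beta)$ and then invokes Theorem~\ref{t:hecke}, Lemma~\ref{l:scaling} and the definitions of $\mathcal{T}_{m^2}$, $\mathcal{U}_{n^2}$. Your write-up simply makes explicit the intermediate steps the paper leaves implicit (the scalar identity, the identification $L(m^2)(n^2)=L(m^2n^2)=L(n^2)(m^2)$, and the final appeal to linear independence of the Siegel theta components), all of which check out.
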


\begin{proof}This follows directly by applying the analogous statement for $U_{n^2}$ and $T_{m^2}$ on the scalar-valued $\Braket{\psi, \Theta_L }(\tau, \alpha, \beta)$ and then using the definition of our operators $\mathcal{T}_{m^2}$ and $\mathcal{U}_{n^2}$.
\end{proof}

\begin{lem}\label{l:compP}For any positive integers $m$ and $n$,
\begin{gather*}
\mathcal{P}_{m^2} \circ \mathcal{P}_{n^2} = \mathcal{P}_{m^2 n^2}.
\end{gather*}
\end{lem}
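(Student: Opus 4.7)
The plan is to unpack both sides of the claimed equality directly from Definition~\ref{d:projection} and verify that they agree componentwise on any $\psi\in M_{v,\bar v,L(m^2 n^2)}$. The first point I would settle is that the composition is well-defined: $\mathcal{P}_{n^2}$ must be interpreted with base lattice $L(m^2)$, so that it is a map $M_{v,\bar v,L(m^2)(n^2)}\to M_{v,\bar v,L(m^2)}$, after which $\mathcal{P}_{m^2}\colon M_{v,\bar v,L(m^2)}\to M_{v,\bar v,L}$ is applied. Both sides are thus maps $M_{v,\bar v,L(m^2 n^2)}\to M_{v,\bar v,L}$. Since rescaling does not change the lattice dimension, the normalization factors $n^{\dim(L(m^2))}=n^{\dim(L)}$ and $m^{\dim(L)}$ combine to give $(mn)^{\dim(L)}$, which matches the normalization of $\mathcal{P}_{(mn)^2}$.

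Writing $\psi(\tau)=\sum_{\gamma\in A(m^2 n^2)}\psi_\gamma(\tau)e_\gamma$, I would apply Definition~\ref{d:projection} twice---first with base $L(m^2)$ and scaling factor $n$, then with base $L$ and scaling factor $m$---to obtain
\begin{gather*}
(\mathcal{P}_{m^2}\circ\mathcal{P}_{n^2})[\psi](\tau)=\frac{1}{(mn)^{\dim(L)}}\sum_{\lambda\in A}\Bigg(\sum_{\substack{\mu\in A(m)\\ m\mu=\lambda}}\sum_{\substack{\gamma\in A(m^2 n)\\ n\gamma=\mu}}\psi_\gamma(\tau)\Bigg)e_\lambda,
\end{gather*}
where all the subgroup inclusions are understood to take place inside $A(m^2 n^2)$. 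The next step is to swap the order of summation and eliminate $\mu$: for each fixed $\gamma$ appearing in the double sum, $\mu$ is forced to equal $n\gamma$, so the double sum collapses to a single sum over $\gamma\in A(m^2 n^2)$ satisfying the three conditions $\gamma\in A(m^2 n)$, $n\gamma\in A(m)$, and $mn\gamma=\lambda$.

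The crux is then to check that these three conditions on $\gamma$ are equivalent to the pair of conditions $\gamma\in A(mn)$ and $mn\gamma=\lambda$ that appear in Definition~\ref{d:projection} applied to $\mathcal{P}_{(mn)^2}[\psi]$. Writing $A(r)=\frac{1}{r}L'/L$, the inclusion $A(mn)\subseteq A(m^2 n)$ is immediate, and $\gamma\in A(mn)$ together with $mn\gamma=\lambda\in A$ implies $n\gamma\in A(m)$ since $m(n\gamma)=\lambda\in L'/L$; conversely, $mn\gamma=\lambda\in A$ forces $\gamma\in\frac{1}{mn}L'/L=A(mn)$, which also yields the other two conditions for free. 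Substituting this back recognizes the formula as $\mathcal{P}_{(mn)^2}[\psi](\tau)$, finishing the proof. The one place to be careful is the nested chain of inclusions $A\subseteq A(m)\subseteq A(mn)\subseteq A(m^2 n)\subseteq A(m^2 n^2)$; once these identifications are fixed, the rest is transparent set-theoretic bookkeeping.
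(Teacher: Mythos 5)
Your proof is correct and follows essentially the same route as the paper's: both unpack Definition~\ref{d:projection} twice (with the intermediate projection taken relative to the base lattice $L(m^2)$), collapse the double sum by eliminating the intermediate variable, and observe that the resulting membership conditions on $\gamma$ reduce to $\gamma\in A(mn)$ with $mn\gamma=\lambda$, matching $\mathcal{P}_{m^2n^2}$. Your explicit verification that $mn\gamma=\lambda\in A$ already forces $\gamma\in A(mn)$ is a slightly more detailed justification of the step the paper states as ``the two delta conditions imply that $\gamma\in A(mn)$,'' but it is the same argument in substance.
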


\begin{proof}Let $\psi(\tau) = \sum_{\nu \in A(m^2 n^2)} \psi_\nu (\tau) e_\nu$ be vector-valued modular of type $\rho_{L(m^2 n^2)}$. Then
\begin{align*}
\mathcal{P}_{m^2} \circ \mathcal{P}_{n^2}[\psi](\tau) &=\frac{1}{n^{\dim(L)}} \mathcal{P}_{m^2} \Bigg[\sum_{\alpha \in A(m^2)} \sum_{\substack{\gamma \in A(m^2 n^2) \\ n \gamma = \alpha}} \Delta_{m^2 n^2}(\gamma,n) \psi_\gamma(\tau) e_\alpha \Bigg]\\
&=\frac{1}{(m n)^{\dim(L)}} \sum_{\lambda \in A} \sum_{\substack{\beta \in A(m^2) \\ m \beta = \lambda}} \sum_{\substack{\gamma \in A(m^2 n^2) \\ n \gamma = \beta}} \Delta_{m^2}(\beta, m) \Delta_{m^2 n^2}(\gamma,n) \psi_{\gamma}(\tau) e_\lambda.
\end{align*}
The two delta conditions imply that $\gamma \in A(mn)$. We can then rewrite the sums as
\begin{align*}
\mathcal{P}_{m^2} \circ \mathcal{P}_{n^2}[\psi](\tau) & = \frac{1}{(m n)^{\dim(L)}} \sum_{\lambda \in A} \sum_{\substack{\beta \in A(m) \\ m \beta = \lambda}} \sum_{\substack{\gamma \in A(m n) \\ n \gamma = \beta}} \psi_\gamma(\tau) e_\lambda \\
& = \frac{1}{(m n)^{\dim(L)}} \sum_{\lambda \in A} \sum_{\substack{\gamma \in A(m n) \\ m n \gamma = \lambda}} \psi_\gamma(\tau) e_\lambda = \mathcal{P}_{m^2 n^2}[\psi](\tau).\tag*{\qed}
\end{align*}\renewcommand{\qed}{}
\end{proof}

However, the projection and Hecke operators only commute when $\gcd(m,n)=1$:

\begin{lem}\label{l:commPT}For $m$ and $n$ such that $\gcd(m,n)=1$,
\begin{gather*}
\mathcal{P}_{n^2} \circ \mathcal{T}_{m^2} = \mathcal{T}_{m^2} \circ \mathcal{P}_{n^2}.
\end{gather*}
\end{lem}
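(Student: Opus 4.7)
The plan is to unpack both compositions on a vector-valued modular form $\psi(\tau) = \sum_{\nu \in A(n^2)}\psi_\nu(\tau)\, e_\nu$ of type $\rho_{L(n^2)}$ using Definitions~\ref{d:hecke1} and~\ref{d:projection}, match the two expressions coefficient by coefficient, and collapse them via a bijection that uses $\gcd(m,n)=1$. The projection $\mathcal{P}_{n^2}$ is taken in its natural generalization with base lattice $L(m^2)$, viewing $L(n^2 m^2)$ as $L(m^2)(n^2)$.

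On the left, expanding $\mathcal{T}_{m^2}[\psi]$ first produces a sum indexed by $\sigma \in A(n^2 m^2)$ with the delta condition $\sigma \in A(n^2 l)$; applying $\mathcal{P}_{n^2}$ then further restricts to $\sigma \in A(nm^2)$ with $n\sigma = \mu$. Since $\gcd(m,n)=1$ and $l \mid m^2$, one has $A(n^2 l) \cap A(nm^2) = A(nl)$. On the right, expanding $\mathcal{P}_{n^2}[\psi]_\lambda = n^{-\dim(L)}\sum_{\gamma \in A(n),\, n\gamma = \lambda} \psi_\gamma$ first and then applying $\mathcal{T}_{m^2}$ gives a sum restricted to $\mu \in A(l)$ with an inner sum over $\gamma \in A(n)$ satisfying $n\gamma = l\mu$.

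The two expressions are then identified via the substitution $\gamma = l\sigma$. Using $\gcd(n,l)=1$, which follows from $\gcd(m,n)=1$ and $l \mid m^2$, multiplication by $l$ sends $\{\sigma \in A(nl) : n\sigma = \mu\}$ bijectively onto $\{\gamma \in A(n) : n\gamma = l\mu\}$; both sets have cardinality $n^{\dim(L)}$, and the kernel of $l\cdot\colon A(nl) \to A(n)$ meets each fiber only at $0$ when $\gcd(l,n)=1$. A short computation writing $\sigma = x/(nl)$ with $x \in L'$, so that $\mu = x/l$, gives $q_{n^2 m^2}(\sigma) = \tfrac{m^2}{2l^2}(x,x) = q_{m^2}(\mu) \bmod \mathbb{Z}$, so the exponential factors agree, and $\psi_{l\sigma} = \psi_\gamma$ under the substitution. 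Assembling these pieces yields the identity.

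The main technical point is the bijection $\sigma \mapsto l\sigma$ on restricted fibers; this is precisely where $\gcd(m,n)=1$ enters crucially, since otherwise $A(n^2 l) \cap A(nm^2)$ strictly contains $A(nl)$ or multiplication by $l$ has a nontrivial kernel on the fiber, consistent with the remark preceding the lemma that $\mathcal{P}_{n^2}$ and $\mathcal{T}_{m^2}$ need not commute in general.
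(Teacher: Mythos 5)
Your proposal is correct and takes essentially the same route as the paper: both expand the two compositions, observe that the exponential factors coincide since $q_{n^2m^2}(\sigma)=q_{m^2}(n\sigma)$, reduce to matching the fiber sums forced by the combined delta conditions ($\sigma\in A(nl)$, using $\gcd(m^2n\,,\,ln^2)=nl$), and conclude via the bijection $\sigma\mapsto l\sigma$ between the two fibers. The only cosmetic difference is that the paper justifies this bijection by noting that $n\gamma$ and $l\gamma$ determine $\gamma\in A(ln)$ via the Euclidean algorithm, whereas you argue by equal cardinality plus injectivity from the triviality of $\ker(l\cdot)\cap\ker(n\cdot)$; these are equivalent.
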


\begin{proof}We start with the left-hand side. Let $\psi(\tau)$ be vector-valued modular of weight $(v,\bar v)$ and type $\rho_{L(n^2)}$. We have
\begin{gather*}
\mathcal{P}_{n^2} \circ \mathcal{T}_{m^2}[\psi](\tau)
= m^{2(w + \bar w - 1)} \mathcal{P}_{n^2} \Bigg[ \sum_{\mu \in A(m^2 n^2)} \Bigg( \sum_{\substack{k, l >0 \\ k l = m^2}} \frac{1}{l^{w + \bar w }} \\
\hphantom{\mathcal{P}_{n^2} \circ \mathcal{T}_{m^2}[\psi](\tau) =}{}\times \sum_{s=0}^{l-1} \Delta_{m^2 n^2}(\mu,k) \e\left( - \frac{s}{k} q_{m^2 n^2} (\mu) \right) \psi_{l \mu} \left( \frac{k \tau + s}{l} \right) \Bigg) e_\mu \Bigg]\\
\hphantom{\mathcal{P}_{n^2} \circ \mathcal{T}_{m^2}[\psi](\tau)}{} = \frac{m^{2(w + \bar w - 1)} }{n^{\dim(L)}} \sum_{\lambda \in A(m^2)} \Bigg(\sum_{\substack{k, l >0 \\ k l = m^2}} \frac{1}{l^{w + \bar w }} \sum_{s=0}^{l-1} \e\left( - \frac{s}{k} q_{m^2} (\lambda) \right)\\
\hphantom{\mathcal{P}_{n^2} \circ \mathcal{T}_{m^2}[\psi](\tau) =}{} \times
\sum_{\substack{\gamma \in A(m^2 n^2) \\ n \gamma = \lambda}} \Delta_{m^2 n^2}(\gamma,n) \Delta_{m^2 n^2}(\gamma,k) \psi_{l \gamma} \left( \frac{k \tau + s}{l} \right)\Bigg) e_\lambda.
\end{gather*}
On the right-hand side, we get
\begin{gather*}
\mathcal{T}_{m^2} \circ \mathcal{P}_{n^2}[\psi](\tau) = \frac{1}{n^{\dim(L)}}\mathcal{T}_{m^2} \Bigg[\sum_{\lambda \in A} \Bigg(\sum_{\substack{\mu \in A(n^2) \\ n \mu = \lambda}} \Delta_{n^2}(\mu,n) \psi_{\mu}(\tau) \Bigg) e_\lambda \Bigg] \\
\hphantom{\mathcal{T}_{m^2} \circ \mathcal{P}_{n^2}[\psi](\tau) }{} = \frac{m^{2(w+\bar w - 1)}}{n^{\dim(L)}}\sum_{\lambda \in A(m^2)} \Bigg( \sum_{\substack{k, l >0 \\ k l = m^2}} \frac{1}{l^{w + \bar w }} \sum_{s=0}^{l-1} \e\left( - \frac{s}{k} q_{m^2} (\lambda) \right)\\
\hphantom{\mathcal{T}_{m^2} \circ \mathcal{P}_{n^2}[\psi](\tau) =}{} \times
\sum_{\substack{\mu \in A(n^2) \\ n \mu = l \lambda}} \Delta_{m^2}(\lambda,k) \Delta_{n^2}(\mu,n) \psi_{\mu}\left( \frac{k \tau + s}{l} \right) \Bigg) e_\lambda.
\end{gather*}
To prove equality between the two sides we need to show that
\begin{gather}
\sum_{\substack{\gamma \in A(m^2 n^2) \\ n \gamma = \lambda}} \Delta_{m^2 n^2}(\gamma,n) \Delta_{m^2 n^2}(\gamma,k) \psi_{l \gamma} \left( \frac{k \tau + s}{l} \right) \nonumber\\
\qquad{} = \sum_{\substack{\mu \in A(n^2) \\ n \mu = l \lambda}} \Delta_{m^2}(\lambda,k) \Delta_{n^2}(\mu,n) \psi_{\mu}\left( \frac{k \tau + s}{l} \right)\label{eq:toshow}
\end{gather}
for all $k,l>0$ such that $ k l = m^2$, $s \in \{0,\ldots, l-1 \}$, and $\lambda \in A\big(m^2\big)$.

On the right-hand side, the two delta functions impose that $\mu \in A(n)$ and $\lambda \in A(l)$, so we can write the right-hand side as
\begin{gather*}
\sum_{\substack{\mu \in A(n) \\ n \mu = l \lambda}} \psi_{\mu}\left( \frac{k \tau + s}{l} \right),
\end{gather*}
when $\lambda \in A(l)$ and zero otherwise.

On the left-hand side, the first delta function imposes that $\gamma \in A\big(m^2 n\big)$, while the second imposes that $\gamma \in A\big(l n^2\big)$. Together those impose that $\gamma \in A(s)$, where $s = \gcd\big(m^2 n, l n^2\big)$. Assuming that $\gcd(m,n)=1$, we have $s = l n$, hence $\gamma \in A(l n)$. Since $n \gamma = \lambda$, this imposes that $\lambda \in A(l) \subseteq A\big(m^2\big)$. So the left-hand side can be written as
\begin{gather*}
\sum_{\substack{\gamma \in A(l n) \\ n \gamma = \lambda}} \psi_{l \gamma} \left( \frac{k \tau + s}{l} \right),
\end{gather*}
when $\lambda \in A(l)$ and zero otherwise. We note that knowing $n \gamma$ and $l \gamma$ completely fixes $\gamma \in A(l n)$ by the Euclidean algorithm. Thus if we define $\mu = l \gamma$, we can rewrite the sum as
\begin{gather*}
\sum_{\substack{\mu \in A(n) \\ n \mu = l \lambda}} \psi_\mu \left( \frac{k \tau + s}{l} \right) ,
\end{gather*}
and \eqref{eq:toshow} is satisfied.
\end{proof}

We then prove the following algebraic relations.

\begin{Th}\label{t:Salgebra}
For $m$ and $n$ such that $\gcd(m,n)=1$,
\begin{gather*}
\mathcal{H}_{m^2} \circ \mathcal{H}_{n^2} = \mathcal{H}_{m^2 n^2},
\end{gather*}
while for $l \geq 2$ and $p$ prime,
\begin{gather*}%\label{Salgebra}
\mathcal{H}_{p^{2l}}=\mathcal{P}_{p^{2l-2}} \circ \mathcal{H}_{p^2} \circ \mathcal{H}_{p^{2l-2}} \circ \mathcal{U}_{p^{2l-2}} - p^{w+\bar w-1} \mathcal{H}_{p^{2l-2}} - p^{2(w+\bar w-1)} \mathcal{H}_{p^{2l-4}}.
\end{gather*}
\end{Th}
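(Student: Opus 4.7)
The plan is to handle the two identities separately, building on the algebraic machinery already established.

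\emph{Coprime case.} Expanding both Hecke operators by Definition~\ref{d:hecke2} gives $\mathcal{H}_{m^2} \circ \mathcal{H}_{n^2} = \mathcal{P}_{m^2} \circ \mathcal{T}_{m^2} \circ \mathcal{P}_{n^2} \circ \mathcal{T}_{n^2}$. Because $\gcd(m,n)=1$, Lemma~\ref{l:commPT} commutes $\mathcal{T}_{m^2}$ past $\mathcal{P}_{n^2}$; collapsing the resulting pair of projections by Lemma~\ref{l:compP} and the pair of Hecke operators by the coprime half of Theorem~\ref{t:heckealg1} gives $\mathcal{P}_{(mn)^2} \circ \mathcal{T}_{(mn)^2} = \mathcal{H}_{m^2 n^2}$.

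\emph{Prime-power recursion.} The first step is to derive a second-order form of the Hecke recursion, namely
\[
\mathcal{T}_{p^{2l}} = \mathcal{T}_{p^2} \circ \mathcal{T}_{p^{2l-2}} - p^{w+\bar w - 1}\, \mathcal{U}_{p^2} \circ \mathcal{T}_{p^{2l-2}} - p^{2(w+\bar w - 1)}\, \mathcal{U}_{p^4} \circ \mathcal{T}_{p^{2l-4}}.
\]
This comes from iterating the recursion in Theorem~\ref{t:heckealg1}: substitute the formula for $\mathcal{T}_{p^{2l-1}}$ into that for $\mathcal{T}_{p^{2l}}$, replace $\mathcal{T}_p\circ\mathcal{T}_p$ by $\mathcal{T}_{p^2}+p^{w+\bar w-1}\mathcal{U}_{p^2}$ (which is the $l=2$ instance of the recursion), and use the commutations $\mathcal{T}_r\circ\mathcal{U}_{n^2}=\mathcal{U}_{n^2}\circ\mathcal{T}_r$ and $\mathcal{U}_{p^2}\circ\mathcal{U}_{p^2}=\mathcal{U}_{p^4}$. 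Both of these commutations lift from their scalar-valued analogues via the pairing with Siegel theta functions exactly as in the proof of Lemma~\ref{l:commUT}. Applying $\mathcal{P}_{p^{2l}}$ to the displayed identity and using the auxiliary relation $\mathcal{P}_{p^{2l}}\circ\mathcal{U}_{p^{2k}}=\mathcal{P}_{p^{2l-2k}}$ for $k=1,2$ (an immediate consequence of $\mathcal{P}_{p^{2l}}=\mathcal{P}_{p^{2l-2k}}\circ\mathcal{P}_{p^{2k}}$ from Lemma~\ref{l:compP} together with $\mathcal{P}_{p^{2k}}\circ\mathcal{U}_{p^{2k}}=\mathcal{I}$ from Lemma~\ref{l:inverses}) rewrites the second and third terms as $-p^{w+\bar w-1}\mathcal{H}_{p^{2l-2}}-p^{2(w+\bar w-1)}\mathcal{H}_{p^{2l-4}}$.

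It remains to identify the leading term as $\mathcal{P}_{p^{2l-2}}\circ\mathcal{H}_{p^2}\circ\mathcal{H}_{p^{2l-2}}\circ\mathcal{U}_{p^{2l-2}}$. I would expand the latter by Definition~\ref{d:hecke2} to
\[
\mathcal{P}_{p^{2l-2}} \circ \mathcal{P}_{p^2} \circ \mathcal{T}_{p^2} \circ \mathcal{P}_{p^{2l-2}} \circ \mathcal{T}_{p^{2l-2}} \circ \mathcal{U}_{p^{2l-2}},
\]
then use Lemma~\ref{l:commUT} to commute $\mathcal{U}_{p^{2l-2}}$ past the innermost $\mathcal{T}_{p^{2l-2}}$, cancel the adjacent $\mathcal{P}_{p^{2l-2}}\circ\mathcal{U}_{p^{2l-2}}$ via Lemma~\ref{l:inverses} (applied on the rescaled lattice $L(p^{2l-2})$), and finally recombine $\mathcal{P}_{p^{2l-2}}\circ\mathcal{P}_{p^2}=\mathcal{P}_{p^{2l}}$ by Lemma~\ref{l:compP} to obtain $\mathcal{P}_{p^{2l}}\circ\mathcal{T}_{p^2}\circ\mathcal{T}_{p^{2l-2}}$, closing the argument.

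The main technical obstacle throughout is the bookkeeping of the lattice on which each operator acts: the operators $\mathcal{P}_{n^2}$, $\mathcal{U}_{n^2}$, $\mathcal{T}_r$ each have natural instances on $L$ and on its various rescalings, and the crucial cancellation in the last step uses Lemma~\ref{l:inverses} on $L(p^{2l-2})$ rather than on $L$; similarly the factorisation $\mathcal{P}_{p^{2l}}=\mathcal{P}_{p^{2l-2}}\circ\mathcal{P}_{p^2}$ mixes projections defined on different lattices. Once those contexts are tracked carefully, the argument reduces to purely formal manipulations.
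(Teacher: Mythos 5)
Your proposal is correct and follows essentially the same route as the paper: derive the second-order recursion $\mathcal{T}_{p^{2l}}=\mathcal{T}_{p^2}\circ\mathcal{T}_{p^{2l-2}}-p^{w+\bar w-1}\mathcal{U}_{p^2}\circ\mathcal{T}_{p^{2l-2}}-p^{2(w+\bar w-1)}\mathcal{U}_{p^2}\circ\mathcal{U}_{p^2}\circ\mathcal{T}_{p^{2l-4}}$ from Theorem~\ref{t:heckealg1} and Lemma~\ref{l:commUT}, apply $\mathcal{P}_{p^{2l}}$, and handle the three terms with Lemmas~\ref{l:compP}, \ref{l:inverses} and~\ref{l:commUT}, while the coprime case uses Lemmas~\ref{l:commPT} and~\ref{l:compP} exactly as in the paper. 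The only differences are cosmetic: you verify the leading term by expanding $\mathcal{P}_{p^{2l-2}}\circ\mathcal{H}_{p^2}\circ\mathcal{H}_{p^{2l-2}}\circ\mathcal{U}_{p^{2l-2}}$ and collapsing it rather than inserting $\mathcal{P}_{p^{2l-2}}\circ\mathcal{U}_{p^{2l-2}}=\mathcal{I}$ into $\mathcal{P}_{p^{2l}}\circ\mathcal{T}_{p^2}\circ\mathcal{T}_{p^{2l-2}}$, and the iteration producing the second-order recursion also implicitly uses the recursion at exponent $2l-2$ (the paper's ``third equation''), which your phrase ``iterating the recursion'' covers.
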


\begin{proof}To prove the first statement, we start with
\begin{gather*}
\mathcal{T}_{m^2} \circ \mathcal{T}_{n^2} = \mathcal{T}_{m^2 n^2},
\end{gather*}
and apply the projection operator $\mathcal{P}_{m^2 n^2} = \mathcal{P}_{m^2} \circ \mathcal{P}_{n^2}$ (using Lemma~\ref{l:compP}) on both sides of the equation. The right-hand side becomes $\mathcal{H}_{m^2 n^2}$, while the left-hand side becomes $\mathcal{H}_{m^2} \circ \mathcal{H}_{n^2}$ after using Lemma~\ref{l:commPT}.

For the second statement, we start with
\begin{gather*}
\mathcal{T}_{p^{m}} = \mathcal{T}_{p} \circ \mathcal{T}_{p^{m-1}} - p^{w + \bar w -1} \mathcal{U}_{p^2} \circ \mathcal{T}_{p^{m-2}} ,
\end{gather*}
for $m \geq 2$ and $p$ prime. Consider the three cases $m=2l$, $m=2l-1$ and $m=2l-2$, with $l \geq 2$:
\begin{gather*}
\mathcal{T}_{p^{2l}} = \mathcal{T}_{p} \circ \mathcal{T}_{p^{2l-1}} - p^{w + \bar w -1} \mathcal{U}_{p^2} \circ \mathcal{T}_{p^{2l-2}} ,\\
\mathcal{T}_{p^{2l-1}} = \mathcal{T}_{p} \circ \mathcal{T}_{p^{2l-2}} - p^{w + \bar w -1} \mathcal{U}_{p^2} \circ \mathcal{T}_{p^{2l-3}} ,\\
\mathcal{T}_{p^{2l-2}} = \mathcal{T}_{p} \circ \mathcal{T}_{p^{2l-3}} - p^{w + \bar w -1} \mathcal{U}_{p^2} \circ \mathcal{T}_{p^{2l-4}} .
\end{gather*}
Inserting the second equation into the first, and using Lemma~\ref{l:commUT}, we get
\begin{gather*}
\mathcal{T}_{p^{2l}} =\mathcal{T}_{p} \circ \mathcal{T}_{p} \circ \mathcal{T}_{p^{2l-2}} - p^{w + \bar w -1} \mathcal{U}_{p^2} \circ \mathcal{T}_{p} \circ \mathcal{T}_{p^{2l-3}} - p^{w + \bar w -1} \mathcal{U}_{p^2} \circ \mathcal{T}_{p^{2l-2}}.
\end{gather*}
Then inserting the third equation, using Lemma~\ref{l:commUT} again, we get
\begin{gather*}
\mathcal{T}_{p^{2l}} =\mathcal{T}_{p} \circ \mathcal{T}_{p} \circ \mathcal{T}_{p^{2l-2}} - p^{w + \bar w -1} \mathcal{U}_{p^2} \circ \mathcal{T}_{p^{2l-2}}\\
\hphantom{\mathcal{T}_{p^{2l}} =}{} - p^{2(w + \bar w -1)} \mathcal{U}_{p^2} \circ \mathcal{U}_{p^2} \circ \mathcal{T}_{p^{2l-4}} - p^{w + \bar w -1} \mathcal{U}_{p^2} \circ \mathcal{T}_{p^{2l-2}}.
\end{gather*}
But
\begin{gather*}
\mathcal{T}_{p} \circ \mathcal{T}_{p} = \mathcal{T}_{p^{2}} + p^{w + \bar w -1} \mathcal{U}_{p^2} ,
\end{gather*}
hence we get
\begin{gather*}
\mathcal{T}_{p^{2l}} =\mathcal{T}_{p^2} \circ \mathcal{T}_{p^{2l-2}} - p^{w + \bar w -1} \mathcal{U}_{p^2} \circ \mathcal{T}_{p^{2l-2}} - p^{2(w + \bar w -1)} \mathcal{U}_{p^2} \circ \mathcal{U}_{p^2} \circ \mathcal{T}_{p^{2l-4}} .
\end{gather*}
We now apply the projection operator $\mathcal{P}_{p^{2l}}$ on both sides of the equation. The left-hand side becomes $\mathcal{H}_{p^{2l}}$, and the last two terms on the right-hand side become
\begin{gather*}
- p^{w + \bar w -1} \mathcal{H}_{p^{2l-2}} - p^{2(w + \bar w -1)}\mathcal{H}_{p^{2l-4}} ,
\end{gather*}
using Lemma~\ref{l:inverses}. For the first term on the right-hand side, we get
\begin{align*}
\mathcal{P}_{p^{2l}} \circ \mathcal{T}_{p^2} \circ \mathcal{T}_{p^{2l-2}}& = \mathcal{P}_{p^{2l-2}} \circ \mathcal{H}_{p^2} \circ \left( \mathcal{P}_{p^{2l-2}} \circ \mathcal{U}_{p^{2l-2}} \right) \circ \mathcal{T}_{p^{2l-2}} \\
& = \mathcal{P}_{p^{2l-2}} \circ \mathcal{H}_{p^2} \circ \mathcal{H}_{p^{2l-2}} \circ \mathcal{U}_{p^{2l-2}},
\end{align*}
where we used Lemmas \ref{l:inverses} and \ref{l:commUT}.
\end{proof}

\section{Comparison to the construction of Bruinier and Stein}\label{form2}

In \cite{bs,stein} Bruinier and Stein also construct Hecke operators on vector-valued modular forms of type $\rho_L$ using a different approach. In this section we compare our Hecke operators to theirs. We find an exact match between our Hecke operators and the Bruinier--Stein operators, for the case $r=p^{2l}$, where $p$ is an odd prime and $l$ a positive integer.

\subsection{The construction of Bruinier and Stein}

Let us now summarize the construction of Bruinier and Stein in \cite{bs,stein}. We refer the reader to~\cite{joshi} and \cite{bs,stein} for further details.

In \cite{bs}, Bruinier and Stein first construct Hecke operators $T^{\rm (BS)}_{m^2}$, where $m$ is a positive integer that is coprime with the level $N$ of the lattice $L$, by extending the Weil representation of $\mathrm{Mp}_2(\mathbb{Z})$ on $\mathbb{C}[A]$ to some appropriate subgroup of $\widetilde{\mathrm{GL}}_2^+(\mathbb{Q})$, where $\widetilde{\mathrm{GL}}_2^+(\mathbb{Q})$ denotes the metaplectic twofold cover of $\mathrm{GL}_2^+(\mathbb{Q})$. They then extend their construction of the Hecke operators $T^{\rm (BS)}_{m^2}$ to all positive integers $m$, not necessarily coprime to $N$, by somehow extending the Weil representation to a suitable double coset. They prove that their Hecke operators $T^{\rm (BS)}_{m^2}$ satisfy the relation
\begin{gather*}
T^{\rm (BS)}_{m^2} \circ T^{\rm (BS)}_{n^2} = T^{\rm (BS)}_{m^2 n^2}, \qquad \gcd(m,n)=1.
\end{gather*}
However, they only describe the action of $T^{\rm (BS)}_{m^2}$ on the Fourier coefficients of vector-valued modular forms in the first case.

In \cite{stein} Stein presents a formula (Theorem~5.4) for the action of their Hecke operators $T^{\rm (BS)}_{p^{2l}}$ on the Fourier coefficients of vector-valued modular forms, for any odd prime~$p$ and positive integer~$l$. To this end, Stein provides an explicit calculation of the action of their extension of the Weil representation on the standard basis $\{ e_\lambda \}_{\lambda \in A}$ of $\mathbb{C}[A]$~-- see Proposition~5.1 and Theorem~5.2 of~\cite{stein}.

Let $\alpha =\left(\left(\begin{smallmatrix} p^{2l} & 0 \\ 0 & 1 \end{smallmatrix}\right),1 \right) \in \widetilde{\mathrm{GL}}_2^+(\mathbb{Q})$. Start by defining an extension of the Weil representation to $\alpha$ by
\begin{gather*}
\rho_L^{-1}(\alpha) e_\lambda = e_{p^l \lambda}.
\end{gather*}
Then consider the double coset $\widetilde{\Gamma}(1) \alpha \widetilde{\Gamma}(1)$, where $\widetilde{\Gamma}(1) = \mathrm{Mp}_2(\mathbb{Z})$. The action above can be extended to an action on this double coset by
\begin{gather*}
\rho_L^{-1}(\beta) e_\lambda = \rho_L^{-1} (\gamma') \rho_L^{-1}(\alpha) \rho_L^{-1}(\gamma) e_\lambda,
\end{gather*}
where $\beta = \gamma \alpha \gamma'$ and $\gamma, \gamma' \in \widetilde{\Gamma}(1)$.

\begin{Def}[\cite{bs,stein}]Let $\psi(\tau)$ be a holomorphic vector-valued modular form\footnote{Note that the Hecke operators in \cite{bs,stein} are defined for holomorphic vector-valued modular forms, so in this section we will restrict our construction to holomorphic vector-valued modular forms as well.} of weight $k$ and type $\rho_L$. Denote by
\begin{gather}\label{eq:coset1}
\widetilde{\Gamma}(1) \alpha \widetilde{\Gamma}(1) = \bigcup_i \widetilde{\Gamma}(1) \delta_i
\end{gather}
the left coset decomposition. Then the Bruinier--Stein Hecke operator $T_{p^{2l}}$ is defined by
\begin{gather}\label{eq:BS}
T^{\rm (BS)}_{p^{2l}}[\psi](\tau) = p^{2l (k-1)} \sum_{i} \sum_{\lambda \in A} \phi_{\delta_i}(\tau)^{-2k} \psi_{\lambda}(\delta_i \tau) \rho_L^{-1}(\delta_i) e_\lambda,
\end{gather}
where the first sum is over the left coset representatives $\delta_i$ in \eqref{eq:coset1}. It can be shown that $T^{\rm (BS)}_{p^{2l}}[\psi](\tau)$ is a holomorphic vector-valued modular form of weight $k$ and type $\rho_L$.
\end{Def}

To get an explicit formula for the Hecke transform $T^{\rm (BS)}_{p^{2l}}[\psi](\tau)$, one needs to calculate $\rho_L^{-1}(\delta_i) e_\lambda$ for all left coset representatives $\delta_i$. This is what is done in Proposition 5.1 and Theorem~5.2 of \cite{stein}. First, we notice that the left coset decomposition \eqref{eq:coset1} can be written explicitly as
\begin{gather*}%\label{eq:coset}
\widetilde{\Gamma}(1) \alpha \widetilde{\Gamma}(1)=\widetilde{\Gamma}(1)\alpha \cup \bigcup_{a=1}^{2l-1} \bigcup_{b \in (\mathbb{Z}/p^a \mathbb{Z})^*} \tilde{\Gamma}(1)\beta_{b,a} \cup \bigcup_{b \in \mathbb{Z}/p^{2l} \mathbb{Z}} \widetilde{\Gamma}(1) \gamma_b,
\end{gather*}
where
\begin{gather*}
\beta_{b,a} = \left( \begin{pmatrix} p^{2l-a} & b \\ 0 & p^a \end{pmatrix}, \sqrt{p^a} \right), \qquad \gamma_b = \left( \begin{pmatrix} 1 & b \\ 0 & p^{2l} \end{pmatrix}, p^l \right).
\end{gather*}
The extension of the Weil representation to these representatives goes as follows.

First, we get the following result for the $\alpha$ and $\gamma_b$ cosets:
\begin{Prop}[{\cite[Proposition 5.1]{stein}}] Let $p$ be an odd prime, $a$, $l$ positive integers with $a < 2 l$ and $b \in (\mathbb{Z}/p^a \mathbb{Z})^*$. Then
\begin{gather*}
\rho_L^{-1}(\alpha) e_\lambda = e_{p^l \lambda},\\
\rho_L^{-1}(\gamma_b) e_\lambda = \sum_{\substack{\nu \in A \\ p^l \nu =\lambda}} \e(- b q(\nu)) e_\nu.
\end{gather*}
\end{Prop}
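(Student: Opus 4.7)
The plan is to derive both formulas directly from the definition of the extension of the Weil representation to the double coset $\widetilde{\Gamma}(1)\alpha\widetilde{\Gamma}(1)$. The first identity $\rho_L^{-1}(\alpha)\,e_\lambda = e_{p^l\lambda}$ is precisely the defining prescription for the extension to $\alpha$, so there is nothing to prove there. For $\rho_L^{-1}(\gamma_b)$, the starting point is the matrix identity
\[\left(\begin{smallmatrix} 1 & b \\ 0 & p^{2l} \end{smallmatrix}\right) \;=\; S\cdot\left(\begin{smallmatrix} p^{2l} & 0 \\ 0 & 1 \end{smallmatrix}\right)\cdot S^{-1} T^b,\]
which writes $\gamma_b$ in the form $\gamma\alpha\gamma'$ with $\gamma=S$ and $\gamma'=S^{-1}T^b$. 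One must then check that, with appropriate branch choices for $\phi_{S^{\pm 1}}(\tau)$ and the metaplectic factor attached to $\alpha$, this product indeed lifts to $\gamma_b = (\gamma_b,\,p^l)\in\widetilde{\mathrm{GL}}_2^+(\mathbb{Q})$ as specified.

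Once the decomposition is in hand, the extension rule $\rho_L^{-1}(\gamma\alpha\gamma') = \rho_L^{-1}(\gamma')\,\rho_L^{-1}(\alpha)\,\rho_L^{-1}(\gamma)$ reduces the problem to the composition
\[\rho_L^{-1}(\gamma_b)\,e_\lambda \;=\; \rho_L(T^{-b})\,\rho_L(S)\,\rho_L^{-1}(\alpha)\,\rho_L(S)^{-1}\,e_\lambda,\]
in which every outer factor is given by the explicit formulas of Definition~\ref{d:weil}. Applying $\rho_L(S)^{-1}$ to $e_\lambda$ yields $\frac{\e(\mathrm{sgn}(L)/8)}{\sqrt{|A|}}\sum_{\mu\in A}\e((\lambda,\mu))\,e_\mu$; then $\rho_L^{-1}(\alpha)$ replaces each $e_\mu$ by $e_{p^l\mu}$; a second application of $\rho_L(S)$ introduces a factor $\frac{\e(-\mathrm{sgn}(L)/8)}{\sqrt{|A|}}\sum_{\nu\in A}\e(-(p^l\mu,\nu))\,e_\nu$; and finally $\rho_L(T^{-b})$ attaches the phase $\e(-b q(\nu))$ to each $e_\nu$.

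The resulting double sum then collapses via character orthogonality on the discriminant group,
\[\sum_{\mu\in A}\e\bigl((\mu,\lambda-p^l\nu)\bigr) \;=\; |A|\,\delta_{\lambda,\,p^l\nu},\]
which restricts the surviving $\nu$'s to those with $p^l\nu=\lambda$ in $A$. The two $\e(\pm\mathrm{sgn}(L)/8)$ phases are complex conjugates and multiply to $1$, while the two $1/\sqrt{|A|}$ normalizations combine with the $|A|$ from orthogonality to give another $1$. What remains is exactly $\sum_{\nu:\,p^l\nu=\lambda}\e(-b q(\nu))\,e_\nu$, as claimed.

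The main obstacle I expect is the bookkeeping of metaplectic phase factors during the lift of the decomposition to $\widetilde{\mathrm{GL}}_2^+(\mathbb{Q})$: between the factor $p^l$ that is built into the definition of $\gamma_b$, the two branches of $\sqrt{c\tau+d}$ needed for $S$ and $S^{-1}$, and the metaplectic normalization chosen for $\alpha$, there are several places where a spurious sign or eighth-root of unity could slip in unnoticed. This is precisely the type of phase-tracking that the authors flag as erroneous in the proof of Theorem~5.2 of~\cite{stein}, and one expects the careful treatment of these branches to be the real content of the derivation presented in Appendix~\ref{a:derivation}.
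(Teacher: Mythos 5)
There is no in-paper proof to compare against here: this Proposition is quoted verbatim from Stein's Proposition~5.1, and Appendix~\ref{a:derivation} only rederives the $\beta_{b,a}$ formulae (Stein's Theorem~5.2). Your blind derivation is correct and is the natural one, in the same spirit as that appendix: the $\alpha$-formula is the defining prescription of the extension, and for $\gamma_b$ you use the factorization $\gamma_b=S\,\alpha\,\big(S^{-1}T^b\big)$, the anti-homomorphism rule $\rho_L^{-1}(\gamma\alpha\gamma')=\rho_L^{-1}(\gamma')\rho_L^{-1}(\alpha)\rho_L^{-1}(\gamma)$, the explicit $S$- and $T$-actions of Definition~\ref{d:weil} together with unitarity (and symmetry of the $S$-matrix) for $\rho_L(S)^{-1}$, and finally the orthogonality relation $\sum_{\mu\in A}\e\big((\mu,\lambda-p^l\nu)\big)=|A|\,\delta_{\lambda,\,p^l\nu}$, which is legitimate because $L$ is non-degenerate, so the discriminant pairing is non-degenerate. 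The phases $\e(\pm\mathrm{sgn}(L)/8)$ and the normalizations indeed cancel, giving exactly $\sum_{p^l\nu=\lambda}\e(-bq(\nu))e_\nu$.

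The one step you defer, namely that the product of the chosen lifts is $\left(\begin{psmallmatrix}1&b\\0&p^{2l}\end{psmallmatrix},+p^l\right)$ rather than the other preimage, is genuinely needed and not cosmetic: the wrong lift would change the answer by the scalar $\rho_L\big((I,-1)\big)=\e(-\mathrm{sgn}(L)/2)$, which equals $-1$ whenever the rank of $L$ is odd. Fortunately it is a one-line check. With the forced inverse lift $S^{-1}=\left(\begin{psmallmatrix}0&1\\-1&0\end{psmallmatrix},\sqrt{-\tau}\right)$ one has $S^{-1}T^b=\left(\begin{psmallmatrix}0&1\\-1&-b\end{psmallmatrix},\sqrt{-(\tau+b)}\right)$, then $\alpha\cdot(S^{-1}T^b)=\left(\begin{psmallmatrix}0&p^{2l}\\-1&-b\end{psmallmatrix},\sqrt{-(\tau+b)}\right)$, and multiplying by $S$ produces the cocycle factor $\sqrt{\,p^{2l}/(-(\tau+b))\,}\cdot\sqrt{-(\tau+b)}=p^l$, because the principal arguments of the two radicands are opposite, so the product of the principal roots is the positive square root of $p^{2l}$. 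Inserting this (and taking well-definedness of the extension on the double coset from \cite{bs}, exactly as the paper does), your argument is complete and establishes the quoted formulae.
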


We also need the extension of the Weil representation for the $\beta_{b,a}$ cosets. Theorem~5.2 of~\cite{stein} presents explicit formulae for the cases $l \geq a$ and $l < a$ separately. However, we claim that there is a mistake in the calculation leading to the formula for the case $l<a$ presented in Theorem~5.2 of~\cite{stein}. As such, we provide here new formulae for the extension of the Weil representation studied in~\cite{bs,stein}. The formula for the case $l \geq a$ that we obtain is equivalent to the one presented in Theorem~5.2 of~\cite{stein}, but our formula for the case $l<a$ is not. For completeness, we provide a derivation of these formulae in Appendix~\ref{a:derivation}.

\begin{restatable}{Prop}{steincorrect}\label{p:steincorrect}
Let $p$ be an odd prime, $a$, $l$ positive integers with $a < 2 l$ and $b \in (\mathbb{Z}/p^a \mathbb{Z})^*$. Then
\begin{gather*}
\rho_L^{-1}( \beta_{b,a}) e_\lambda =
\begin{cases}
\displaystyle p^{-\frac{a}{2} \dim(L)} \sum_{\substack{\delta \in A(p^a) \\ p^a \delta = \lambda}} \e\left( - b q_{p^a}(\delta) \right) e_{p^{l-a} \lambda} & \text{if $l \geq a$,} \\
\displaystyle p^{-\frac{a}{2} \dim(L)} \sum_{\substack{\mu \in A \\ p^{a-l} \mu = \lambda}} \sum_{\substack{\delta \in A(p^l) \\ p^l \delta = \mu}} \e \big({-} b p^{a-l} q_{p^l}(\delta) \big) e_\mu & \text{if $l < a$.}
\end{cases}
\end{gather*}
\end{restatable}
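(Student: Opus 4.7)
The plan is to compute $\rho_L^{-1}(\beta_{b,a}) e_\lambda$ directly by exhibiting a decomposition $\beta_{b,a} = \gamma \cdot \alpha \cdot \gamma'$ with $\gamma, \gamma' \in \mathrm{Mp}_2(\mathbb{Z})$, and then iterating the actions of $\rho_L^{-1}(T)$ and $\rho_L^{-1}(S)$ from Definition~\ref{d:weil} together with $\rho_L^{-1}(\alpha) e_\mu = e_{p^l \mu}$. First I would pick integers $r_1$ and $c$ with $p^a r_1 - bc = 1$ (which exist because $\gcd(b,p^a) = 1$) and verify the matrix identity
\begin{gather*}
\begin{pmatrix} p^{2l-a} & b \\ 0 & p^a \end{pmatrix} = \begin{pmatrix} r_1 & b \\ c & p^a \end{pmatrix} \begin{pmatrix} p^{2l} & 0 \\ 0 & 1 \end{pmatrix} \begin{pmatrix} 1 & 0 \\ -cp^{2l-a} & 1 \end{pmatrix},
\end{gather*}
so that both $\gamma$ and $\gamma'$ belong to $\mathrm{SL}_2(\mathbb{Z})$ and admit lifts to $\mathrm{Mp}_2(\mathbb{Z})$ with metaplectic factors compatible with the chosen square-root $\sqrt{p^a}$ in the definition of $\beta_{b,a}$.

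Next I would evaluate each of the three factors in turn. Since $\gamma' = S T^{cp^{2l-a}} S^{-1}$, the operator $\rho_L^{-1}(\gamma')$ is a conjugate of a $T$-action and contributes at most a Gauss sum with a linear character in the discriminant group. The matrix $\gamma$ has nonzero lower-left entry, so its action is computed either by the standard closed-form expression for $\rho_L(M)$ when $c \neq 0$ or, equivalently, by a Bruhat-type continued-fraction decomposition of $\gamma$ into products of $S$ and $T$-powers. Sandwiching the diagonal map $\rho_L^{-1}(\alpha)\colon e_\mu \mapsto e_{p^l \mu}$ then collapses the quadratic exponents into the rescaled quadratic form $q_{p^a}$, and the linear Gauss sum over cosets of $L/p^a L$ localizes the sum to elements $\delta \in A(p^a)$ satisfying $p^a \delta = \lambda$.

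The case distinction arises naturally at this step. When $l \geq a$, an element $\delta \in A(p^a)$ with $p^a \delta = \lambda$ automatically satisfies $p^l \delta = p^{l-a} \lambda \in A$, so the basis vector $e_{p^{l-a}\lambda}$ factors out of the sum and one recovers the first formula. When $l < a$, the vector $p^l \delta$ lies in $A(p^{a-l}) \setminus A$, so the sum cannot collapse to a single basis vector: it must be reorganized as a sum over $\mu \in A$ with $p^{a-l}\mu = \lambda$, with an inner sum over lifts $\delta \in A(p^l)$ such that $p^l \delta = \mu$, which produces the nested-sum structure of the second formula.

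The main obstacle I anticipate is tracking the metaplectic phases and Gauss-sum normalizations carefully enough to land exactly on the coefficient $p^{-\frac{a}{2}\dim(L)}$, with no spurious eighth roots of unity from the $\e(-\mathrm{sgn}(L)/8)$ factors of $\rho_L(S)$. The regime $l < a$ is noticeably more delicate, because the reindexing between $\delta \in A(p^a)$ and pairs $(\mu,\delta)$ with $p^{a-l}\mu = \lambda$ and $p^l \delta = \mu$ must be handled carefully; this appears to be precisely the step that is performed differently in \cite{stein}, and getting it right is what distinguishes the formula above from the one stated there.
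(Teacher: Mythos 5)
Your proposed decomposition is algebraically valid (indeed $\begin{psmallmatrix} r_1 & b \\ c & p^a \end{psmallmatrix}\begin{psmallmatrix} p^{2l} & 0 \\ 0 & 1\end{psmallmatrix}\begin{psmallmatrix} 1 & 0 \\ -cp^{2l-a} & 1 \end{psmallmatrix}=\begin{psmallmatrix} p^{2l-a} & b \\ 0 & p^a \end{psmallmatrix}$ when $r_1p^a-bc=1$), and the overall strategy~-- decompose $\beta_{b,a}=\gamma\alpha\gamma'$ and compose the known actions~-- is in principle the same route by which Stein arrives at the intermediate identity that the paper takes as its starting point (equation~(5.9) of~\cite{stein}). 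The paper itself does not re-derive that decomposition step; it accepts the intermediate formula and then performs the simplification by shifts of summation variables, orthogonality of characters over $A$ and $A(p^a)$, an explicit bijection between $\{\delta\in A(p^a)\colon p^a\delta=b\lambda\}$ and $\{\delta'\in A(p^a)\colon p^a\delta'=\lambda\}$ given by $\delta=b\delta'$ (valid because $\gcd(b,p^a)=1$), and the relation $rp^a-bt=1$ to convert the resulting phase $\e\big(tb^2q_{p^a}(\delta')\big)$ into $\e\big({-}bq_{p^a}(\delta')\big)$, treating $l\geq a$ and $l<a$ separately.

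However, as a proof your proposal has a genuine gap: essentially all of the content of the Proposition is deferred to the ``main obstacle'' you anticipate rather than carried out. Concretely: (i) your claim that the linear character sum ``localizes the sum to $\delta\in A(p^a)$ with $p^a\delta=\lambda$'' skips the actual outcome of the bookkeeping, in which the constraint first appears as $p^a\delta=b\lambda$ and the passage to $p^a\delta'=\lambda$ requires the unit-inversion bijection and the identity $rp^a-bt=1$; this is precisely the step where care is needed and it is not justified in your sketch. (ii) Evaluating $\rho_L^{-1}(\gamma)$ for $\gamma$ with nonzero lower-left entry via a closed formula (or a word in $S$, $T$) produces normalized quadratic Gauss sums, eighth roots of unity from $\e(-\mathrm{sgn}(L)/8)$, and for odd $p$ potentially quadratic-residue characters; showing that all of these cancel to leave exactly the factor $p^{-\frac{a}{2}\dim(L)}$, and that the metaplectic component of your decomposition matches the chosen lift $\big(\cdot,\sqrt{p^a}\big)$ (a sign mismatch would insert an extra action of $(-I,{\rm i})$ and change the answer), is the substantive computation and is nowhere performed. (iii) The case $l<a$~-- the one in which the published formula of~\cite{stein} is being corrected~-- is described only qualitatively (``must be reorganized as a sum over $\mu\in A$ with $p^{a-l}\mu=\lambda$''), so the very statement the Proposition is asserting is assumed rather than derived. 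Until these computations are actually done, the proposal is a plausible plan, not a proof.
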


Substituting these two Propositions in \eqref{eq:BS} and simplifying, we can write
\begin{gather*}
T^{\rm (BS)}_{p^{2l}}[\psi](\tau) = C^{\rm (BS)}_{\alpha}(\tau) + \sum_{a=1}^{2l-1} \sum_{b \in (\mathbb{Z}/p^a \mathbb{Z})^*} C^{\rm (BS)}_{\beta_{b,a}}(\tau) + \sum_{b =0}^{p^{2l}-1} C^{\rm (BS)}_{\gamma_b}(\tau),
\end{gather*}
with
\begin{gather}\label{eq:CBS1}
C^{\rm (BS)}_{\alpha}(\tau) = p^{2l (k-1)} \sum_{\lambda \in A} \psi_{\lambda}\big( p^{2l} \tau \big) e_{p^l \lambda},\\
\label{eq:CBS2}
C^{\rm (BS)}_{\beta_{b,a}}(\tau) = \begin{cases}
\displaystyle
p^{(2l-a) k - 2l - \frac{a}{2} \dim(L)} \sum_{\lambda \in A} \sum_{\substack{\delta \in A(p^a) \\ p^a \delta = \lambda}} \e\left( - b q_{p^a}(\delta) \right) & \\
\qquad {}\times \psi_{\lambda}\left( \frac{p^{2l-a} \tau + b}{p^a} \right) e_{p^{l-a} \lambda} & \text{if $l \geq a$,}\vspace{1mm}\\
\displaystyle
p^{(2l-a) k - 2l- \frac{a}{2} \dim(L)}
\sum_{\rho \in A } \sum_{\substack{\delta \in A(p^l) \\ p^l \delta = \rho}} \e \big({-} b p^{a-l} q_{p^l}(\delta) \big) & \\
\qquad {}\times \psi_{p^{a-l} \rho} \left( \frac{p^{2l-a} \tau + b}{p^a} \right) e_\rho
& \text{if $l < a$,}
\end{cases}\\
\label{eq:CBS3}
C^{\rm (BS)}_{\gamma_b}(\tau) = p^{- 2 l } \sum_{\lambda \in A} \e(- b q(\lambda))\psi_{p^l \lambda}\left( \frac{\tau +b}{p^{2 l} }\right) e_\lambda .
\end{gather}

\subsection{Comparison}

Let us now compare the Bruinier--Stein construction with our Hecke operators $\mathcal{H}_{p^{2 l}}$. Since the Bruinier--Stein operators are defined for holomorphic vector-valued modular forms, we also restrict our construction to holomorphic vector-valued modular forms. In particular, we only consider weights of the form $(v, \bar v) = (k,0)$.

From Proposition \ref{p:Ytau}, we can write
\begin{gather*}
\mathcal{H}_{p^{2l}}[\psi](\tau)
=p^{2l (k-1)} \sum_{\lambda \in A} \Bigg(\sum_{\substack{\gamma \in A(p^{2l})\\ p^l \gamma = \lambda}} \sum_{a=0}^{2l} \frac{1}{p^{a \left(k + \frac{1}{2} \dim(L) \right)}} \\
\hphantom{\mathcal{H}_{p^{2l}}[\psi](\tau)=}{} \times \sum_{b=0}^{p^{a}-1} \Delta_{p^{2l}}\big(\gamma,p^l\big) \Delta_{p^{2l}}\big(\gamma,p^{2l-a}\big) \e\left( - \frac{b}{p^{2l-a}} q_{p^{2l}} (\gamma) \right) \psi_{p^{a} \gamma} \left( \frac{p^{2l-a} \tau + b}{p^{a}} \right) \Bigg) e_\lambda.
\end{gather*}
In fact, as mentioned in Remark \ref{r:heckecoprime}, we will consider a slightly different definition of the Hecke transform in this section. We use a double coset definition as in Bruinier and Stein, which simply amounts to restricting the sum over~$b$ to those that are in~$(\mathbb{Z}/p^a \mathbb{Z})^*$.

To compare with the Hecke operator of Bruinier and Stein, we define
\begin{gather*}
\mathcal{H}_{p^{2l}}[\psi](\tau) = C_\alpha(\tau) + \sum_{a=1}^{2l-1} \sum_{b \in (\mathbb{Z}/p^a \mathbb{Z})^*} C_{\beta_{b,a}}(\tau) + \sum_{b =0}^{p^{2l}-1} C_{\gamma_b}(\tau),
\end{gather*}
with
\begin{gather*}
C_\alpha(\tau) = p^{2l (k-1)} \sum_{\lambda \in A} \sum_{\substack{\gamma \in A(p^{2l})\\ p^l \gamma = \lambda}} \Delta_{p^{2l}}\big(\gamma,p^l\big) \Delta_{p^{2l}}\big(\gamma,p^{2l}\big) \psi_{ \gamma} \big( p^{2l} \tau \big) e_\lambda,\\
C_{\beta_{b,a}}(\tau) = p^{(2l-a) k-2l -\frac{a}{2} \dim(L) } \\
\hphantom{C_{\beta_{b,a}}(\tau) =}{} \times \sum_{\lambda \in A}\sum_{\substack{\gamma \in A(p^{2l})\\ p^l \gamma = \lambda}} \Delta_{p^{2l}}\big(\gamma,p^l\big) \Delta_{p^{2l}}\big(\gamma,p^{2l-a}\big) \e\!\left( - \frac{b}{p^{2l-a}} q_{p^{2l}} (\gamma) \right) \psi_{p^{a} \gamma}\! \left( \frac{p^{2l-a} \tau + b}{p^{a}} \right) e_\lambda, \\
C_{\gamma_b}(\tau) = p^{-2l - l \dim(L)} \sum_{\lambda \in A}\sum_{\substack{\gamma \in A(p^{2l})\\ p^l \gamma = \lambda}} \Delta_{p^{2l}}\big(\gamma,p^l\big) \Delta_{p^{2l}}(\gamma,1) \e\big( {-} b q_{p^{2l}} (\gamma) \big) \psi_{p^{2l} \gamma} \left( \frac{ \tau + b}{p^{2l}} \right) e_\lambda.
\end{gather*}
We need to compare these three expressions with the corresponding expressions obtained by Bruinier and Stein in equations~\eqref{eq:CBS1}, \eqref{eq:CBS2} and~\eqref{eq:CBS3}.

\subsubsection[The $\alpha$ coset]{The $\boldsymbol{\alpha}$ coset}

We have
\begin{gather*}
C_\alpha(\tau) = p^{2l (k-1)}\sum_{\lambda \in A} \sum_{\substack{\gamma \in A(p^{2l})\\ p^l \gamma = \lambda}} \Delta_{p^{2l}}\big(\gamma,p^l\big) \Delta_{p^{2l}}\big(\gamma,p^{2l}\big) \psi_{\gamma} \big( p^{2l} \tau \big) e_\lambda.
\end{gather*}
The $\Delta_{p^{2l}}\big(\gamma,p^{2l}\big)$ implies that $\gamma \in A$, in which case the condition $\Delta_{p^{2l}}\big(\gamma,p^l\big)$ becomes trivial. So we can rewrite this expression as
\begin{gather*}
C_\alpha(\tau) = p^{2l (k-1)} \sum_{\lambda \in A} \psi_{ \lambda} \big( p^{2l} \tau \big) e_{p^l \lambda},
\end{gather*}
which is precisely $C_{\alpha}^{\rm (BS)}(\tau)$ in \eqref{eq:CBS1}.

\subsubsection[The $\gamma_b$ cosets]{The $\boldsymbol{\gamma_b}$ cosets}

Let us move on to the $\gamma_b$ cosets. We have
\begin{gather*}
C_{\gamma_b}(\tau) = p^{-2l - l \dim(L)} \sum_{\lambda \in A}\sum_{\substack{\gamma \in A(p^{2l})\\ p^l \gamma = \lambda}} \Delta_{p^{2l}}\big(\gamma,p^l\big) \Delta_{p^{2l}}(\gamma,1) \e\big( {-} b q_{p^{2l}} (\gamma) \big) \psi_{p^{2l} \gamma} \left( \frac{ \tau + b}{p^{2l}} \right) e_\lambda.
\end{gather*}
The $\Delta_{p^{2l}}(\gamma,1)$ condition is trivial, while the $ \Delta_{p^{2l}}(\gamma,p^l)$ condition imposes that $\gamma \in A\big(p^l\big) \subseteq A\big(p^{2l}\big)$. So we can write
\begin{align*}
C_{\gamma_b}(\tau) & = p^{-2l - l \dim(L)} \sum_{\lambda \in A}\sum_{\substack{\gamma \in A(p^{l})\\ p^l \gamma = \lambda}} \e\big({-} b q \big( p^l \gamma \big) \big) \psi_{p^{2l} \gamma} \left( \frac{ \tau + b}{p^{2l}} \right) e_{p^l \gamma} \\
& = p^{-2l - l \dim(L)} \sum_{\lambda \in A} \e ( - b q (\lambda) ) \psi_{p^l \lambda} \left( \frac{ \tau + b}{p^{2l}} \right) e_{\lambda} \Bigg(\sum_{\substack{\gamma \in A(p^{l})\\ p^l \gamma = \lambda}} 1 \Bigg) \\
& =p^{-2l } \sum_{\lambda \in A} \e ( - b q (\lambda) ) \psi_{p^l \lambda} \left( \frac{ \tau + b}{p^{2l}} \right) e_{\lambda},
\end{align*}
where we evaluated the summation as in \eqref{eq:usefulsum}. This is precisely $C_{\gamma_b}^{\rm (BS)}(\tau)$ in \eqref{eq:CBS3}.

\subsubsection[The $\beta_{b,a}$ cosets]{The $\boldsymbol{\beta_{b,a}}$ cosets}

The remaining cases correspond to the $\beta_{b,a}$ cosets. We have
\begin{gather*}
C_{\beta_{b,a}}(\tau) = p^{(2l-a) k-2l -\frac{a}{2} \dim(L) )} \\
\hphantom{C_{\beta_{b,a}}(\tau) =}{} \times \sum_{\lambda \in A}\sum_{\substack{\gamma \in A(p^{2l})\\ p^l \gamma = \lambda}} \Delta_{p^{2l}}\big(\gamma,p^l\big) \Delta_{p^{2l}}\big(\gamma,p^{2l-a}\big) \e\!\left( - \frac{b}{p^{2l-a}} q_{p^{2l}} (\gamma) \right) \psi_{p^{a} \gamma} \!\left( \frac{p^{2l-a} \tau + b}{p^{a}} \right) e_\lambda.
\end{gather*}
The $\Delta_{p^{2l}}\big(\gamma,p^l\big)$ condition imposes that $\gamma \in A(p^l) \subseteq A\big(p^{2l}\big)$. We rewrite
\begin{gather*}
C_{\beta_{b,a}}(\tau) = p^{(2l-a) k-2l -\frac{a}{2} \dim(L) } \sum_{\lambda \in A}\sum_{\substack{\gamma \in A(p^{l})\\ p^l \gamma = \lambda}} \Delta_{p^{2l}}\big(\gamma,p^{2l-a}\big)\\
\hphantom{C_{\beta_{b,a}}(\tau) =}{}\times \e\left( - \frac{b}{p^{2l-a}} q_{p^{2l}} (\gamma) \right) \psi_{p^{a} \gamma} \left( \frac{p^{2l-a} \tau + b}{p^{a}} \right) e_\lambda \\
\hphantom{C_{\beta_{b,a}}(\tau)}{}= p^{(2l-a) k-2l -\frac{a}{2} \dim(L) } \sum_{\gamma \in A(p^{l})} \Delta_{p^{2l}}\big(\gamma,p^{2l-a}\big) \\
\hphantom{C_{\beta_{b,a}}(\tau) =}{}\times \e\left( - \frac{b}{p^{2l-a}} q_{p^{2l}} (\gamma) \right) \psi_{p^{a} \gamma} \left( \frac{p^{2l-a} \tau + b}{p^{a}} \right) e_{p^l \gamma}.
\end{gather*}

{\bf $\boldsymbol{l \geq a}$.} Let us consider first the case when $l \geq a$. The $\Delta_{p^{2l}}\big(\gamma,p^{2l-a}\big)$ condition then imposes that $\gamma \in A(p^a)$. Since $l \geq a$, $A(p^a) \subseteq A\big(p^l\big)$, hence we can write
\begin{align*}
C_{\beta_{b,a}}(\tau) &= p^{(2l-a) k-2l -\frac{a}{2} \dim(L) } \sum_{\gamma \in A(p^{a})} \e ( -b q_{p^{a}} (\gamma) ) \psi_{p^{a} \gamma} \left( \frac{p^{2l-a} \tau + b}{p^{a}} \right) e_{p^l \gamma} \\
&= p^{(2l-a) k-2l -\frac{a}{2} \dim(L) } \sum_{\lambda \in A} \Bigg( \sum_{\substack{\gamma \in A(p^{a}) \\ p^a \gamma = \lambda}} \e ( -b q_{p^{a}} (\gamma) ) \Bigg) \psi_{\lambda} \left( \frac{p^{2l-a} \tau + b}{p^{a}} \right) e_{p^{l-a} \lambda}.
%\label{eq:fun1}
\end{align*}
This is precisely $C^{\rm (BS)}_{\beta_{b,a}}(\tau) $ with $l \geq a$ in \eqref{eq:CBS2}.

{\bf $\boldsymbol{l < a}$.} We start with
\begin{gather*}
C_{\beta_{b,a}}(\tau) = p^{(2l-a) k-2l -\frac{a}{2} \dim(L) } \sum_{\lambda \in A}\sum_{\substack{\gamma \in A(p^{l})\\ p^l \gamma = \lambda}} \Delta_{p^{2l}}\big(\gamma,p^{2l-a}\big)\\
\hphantom{C_{\beta_{b,a}}(\tau) =}{}\times \e\left( - \frac{b}{p^{2l-a}} q_{p^{2l}} (\gamma) \right) \psi_{p^{a} \gamma} \left( \frac{p^{2l-a} \tau + b}{p^{a}} \right) e_\lambda .
\end{gather*}
The $\Delta_{p^{2l}}\big(\gamma,p^{2l-a}\big)$ condition imposes that $\gamma \in A(p^a)$, but since $l<a$, $A\big(p^l\big) \subseteq A(p^a)$, hence this condition is vacuous. Thus we get
\begin{gather*}%\label{eq:step}
C_{\beta_{b,a}}(\tau) = p^{(2l-a) k-2l -\frac{a}{2} \dim(L) }\sum_{\lambda \in A} \Bigg( \sum_{\substack{\gamma \in A(p^{l})\\ p^l \gamma = \lambda}} \e\big( {-}b p^{a-l} q_{p^{l}} (\gamma) \big) \Bigg) \psi_{p^{a-l} \lambda} \left( \frac{p^{2l-a} \tau + b}{p^{a}} \right) e_\lambda .
\end{gather*}
This is precisely $C^{\rm (BS)}_{\beta_{b,a}}(\tau) $ with $l < a$ in~\eqref{eq:CBS2}.

\begin{Rem}It was proved in \cite[Theorem~5.6]{bs} that the operators $T^{\rm (BS)}_{n^2}$ preserve cusp forms and are self-adjoint with respect to the Petersson inner product defined in~\eqref{e:petersson}. From the equivalence of the two constructions it follows that the operators $\mathcal{H}_{n^2}$ are self-adjoint as well.
\end{Rem}

\begin{Rem}The action of the operators $T^{\rm (BS)}_{p^{2l}}$ for $l \in \mathbb{N}$ on Fourier coefficients of a holomorphic vector-valued modular form was computed in~\cite{stein}. A similar computation could be done for the operators $\mathcal{T}_{p^l}$ by applying the explicit Definition~\ref{d:hecke1} on holomorphic vector-valued modular forms with a given $q$-expansion and simplifying the series expansion.
\end{Rem}

\appendix

\section{Derivation of Proposition \ref{p:steincorrect}}\label{a:derivation}

In this appendix we derive explicit formulae for the extension of the Weil representation studied in \cite{bs,stein} for the $\beta_{b,a}$ cosets. More precisely, we prove:

\steincorrect*

As noted in the main text, the formula above for the $l \geq a$ case is equivalent to the formula presented in Theorem~5.2 of \cite{stein}. However, the formula for $l < a$ is not. We believe that there is a mistake in the calculation of \cite{stein} for the case $l<a$.

\begin{proof}We follow the beginning of the proof of Theorem~5.2 in \cite{stein}. Note that $h$ and $s$ in \cite{stein} are denoted by $b$ and $a$ respectively in the current paper. Our starting point is equation~(5.9) in \cite{stein}, which in our notation reads
\begin{gather*}
\rho_L^{-1}( \beta_{b,a}) e_\lambda = \frac{1}{\sqrt{|A|^3} \sqrt{|A(p^a)|}} \sum_{\nu, \rho, \mu \in A} \e\big( b r q(\lambda) - p^{2l-a} t q(\nu) - (\nu, \rho) - b (\mu,\lambda) + p^l (\mu, \nu) \big) e_\rho \\
\hphantom{\rho_L^{-1}( \beta_{b,a}) e_\lambda =}{}\times \sum_{\delta \in A(p^a)} \e \big( t q_{p^a}(\delta) - r(\delta, \lambda)_{p^a} + (\mu, \delta)_{p^a} \big).
\end{gather*}
Since
\begin{gather*}
\frac{\sqrt{|A|}}{\sqrt{|A(p^a)|}} = p^{- \frac{a}{2} \dim(L)},
\end{gather*}
we can rewrite this equation as
\begin{gather*}
\rho_L^{-1}( \beta_{b,a}) e_\lambda = \frac{p^{- \frac{a}{2} \dim(L)}}{|A|^2} \sum_{\nu, \rho, \mu \in A} \e\big( b r q(\lambda) - p^{2l-a} t q(\nu) - (\nu, \rho) - b (\mu,\lambda) + p^l (\mu, \nu) \big) e_\rho \\
\hphantom{\rho_L^{-1}( \beta_{b,a}) e_\lambda =}{} \times \sum_{\delta \in A(p^a)} \e \big( t q_{p^a}(\delta) - r(\delta, \lambda)_{p^a} + (\mu, \delta)_{p^a} \big).
\end{gather*}
Note that the integers $r$, $p^a$, $b$ and $t$ are related by
\begin{gather*}
r p^a - b t = 1.
\end{gather*}
In particular, $b$ and $p^a$ are coprime.

{\bf $\boldsymbol{l \geq a}$.} We first consider the case $l \geq a$.
Let us do a shift $\delta \mapsto \delta - p^{l-a} \nu$. Since $p^{l-a} \nu \in A \subseteq A(p^a)$, the shift does not change the sum over $\delta$ since it is just relabeling. We get
\begin{gather*}
\rho_L^{-1}( \beta_{b,a}) e_\lambda = \frac{p^{- \frac{a}{2} \dim(L)}}{|A|^2} \sum_{\nu, \rho, \mu \in A} \e\big( b r q(\lambda) - (\nu, \rho) - b (\mu,\lambda) + r p^l (\nu, \lambda) \big) e_\rho \\
\hphantom{\rho_L^{-1}( \beta_{b,a}) e_\lambda =}{} \times \sum_{\delta \in A(p^a)} \e \big( t q_{p^a}(\delta) - (p^a \delta, t p^{l-a}\nu + r \lambda - \mu) \big).
\end{gather*}
Let us do a further shift $\mu \mapsto \mu + r \lambda + t p^{l-a} \nu$, which also does not change the sum
\begin{gather*}
\rho_L^{-1}( \beta_{b,a}) e_\lambda = \frac{p^{- \frac{a}{2} \dim(L)}}{|A|^2} \sum_{\nu, \rho, \mu \in A} \e\big( {-} b r q(\lambda) - (\nu, \rho) - b (\mu,\lambda) + p^{l-a} (\nu, \lambda) \big) e_\rho \\
\hphantom{\rho_L^{-1}( \beta_{b,a}) e_\lambda =}{} \times \sum_{\delta \in A(p^a)} \e \big( t q_{p^a}(\delta) + (p^a \delta, \mu) \big).
\end{gather*}
The sum over $\nu \in A$ is non-zero and equal to $|A|$ if and only if $\rho = p^{l-a} \lambda$ mod $L$. Thus we get
\begin{gather*}
\rho_L^{-1}( \beta_{b,a}) e_\lambda = \frac{p^{- \frac{a}{2} \dim(L)}}{|A|} \sum_{\mu \in A} \e ( - b r q(\lambda) - b (\mu,\lambda) ) e_{p^{l-a} \lambda} \sum_{\delta \in A(p^a)} \e \big( t q_{p^a}(\delta) + (p^a \delta, \mu) \big).
\end{gather*}
The sum over $\mu$ is then non-zero and equal to $|A|$ if and only if $b \lambda = p^a \delta$ mod $L$. Thus we get
\begin{gather}\label{eq:sss}
\rho_L^{-1}( \beta_{b,a}) e_\lambda =p^{- \frac{a}{2} \dim(L)} \e ( -b r q(\lambda) ) e_{p^{l-a} \lambda}
\sum_{\substack{\delta \in A(p^a) \\ p^a \delta = b \lambda}}\e ( t q_{p^a}(\delta) ).
\end{gather}

Now let $S$ be the set of $\delta \in A(p^a)$ such that $p^a \delta = b \lambda$ for some fixed $\lambda \in A$, and $S'$ be the set of $\delta' \in A(p^a)$ such that $p^a \delta' = \lambda$. We claim that there is a bijection $f\colon S' \to S$ given by $f\colon \delta' \mapsto \delta = b \delta'$.

First, let us show that it is injective. Any two $\delta_1', \delta_2' \in S'$ must differ by an element of $\frac{1}{p^a} L/L \subseteq A(p^a)$, that is, $\delta_1' = \delta_2' + \mu$ for some $\mu \in \frac{1}{p^a }L/L$. But then, $b \delta_1' = b \delta_2' + b \mu$, and $b \mu = 0$ mod $L$ if and only if $\mu = 0$ mod~$L$, since $b$ and $p^a$ are coprime. Therefore $b \delta_1' = b \delta_2'$ if and only if $\delta_1' = \delta_2'$ mod $L$.

Second, we show that $f$ is surjective. We need to show that any $\delta \in S$ can be written as $\delta = b \delta'$ for some $\delta' \in S'$. Pick a $\delta' \in S'$. $\delta$ can be written as $\delta = b \delta' + \mu$ for some $\mu \in A(p^a)$. But then
\begin{gather*}
b \lambda = p^a \delta = b p^a \delta' + p^a \mu = b \lambda + p^a \mu,
\end{gather*}
and hence $p^a \mu = 0$ mod $L$, that is, $\mu \in \frac{1}{p^a} L/L \subseteq A(p^a)$. Now, since $b$ is coprime with $p^a$, we can always write $\mu = b \nu$ for some $\nu \in \frac{1}{p^a} L/L$. Thus we get
\begin{gather*}
\delta = b \delta ' + b \nu = b (\delta' + \nu) = b \delta'',
\end{gather*}
where $\delta'' = \delta' + \nu \in A(p^a)$, and $p^a \delta'' = p^a \delta' + p^a \nu = p^a \delta' = \lambda$. Thus we conclude that $\delta = h \delta''$, with $\delta'' \in S'$.

As a result, the bijection $f\colon S' \to S$ allows us to substitute $\delta = b \delta'$ in \eqref{eq:sss} and replace the sum over $\delta \in A(p^a)$ such that $p^a \delta = b \lambda$ by a sum over $\delta' \in A(p^a)$ such that $p^a \delta' = \lambda$. We get
\begin{gather*}
\rho_L^{-1}( \beta_{b,a}) e_\lambda =p^{- \frac{a}{2} \dim(L)} \e ( -b r q(\lambda) ) e_{p^{l-a} \lambda}
\sum_{\substack{\delta' \in A(p^a) \\ p^a \delta' = \lambda}}\e \big( t b^2 q_{p^a}(\delta') \big).
\end{gather*}
Now using $bt=rp^a -1$,
\begin{align*}
\rho_L^{-1}( \beta_{b,a}) e_\lambda &=p^{- \frac{a}{2} \dim(L)} \e ( -b r q(\lambda) ) e_{p^{l-a} \lambda}
\sum_{\substack{\delta' \in A(p^a) \\ p^a \delta' = \lambda}}\e ( -b q_{p^a}(\delta')) \e (b r q(p^a \delta') ) \\
&=p^{- \frac{a}{2} \dim(L)}
\sum_{\substack{\delta' \in A(p^a) \\ p^a \delta' = \lambda}}\e ( -b q_{p^a}(\delta') ) e_{p^{l-a} \lambda} .
\end{align*}

{\bf $\boldsymbol{l < a}$.} Let us start again with
\begin{gather*}
\rho_L^{-1}( \beta_{b,a}) e_\lambda = \frac{p^{- \frac{a}{2} \dim(L)}}{|A|^2}\sum_{\nu, \rho, \mu \in A} \e\big( b r q(\lambda) - p^{2l-a} t q(\nu) - (\nu, \rho) - b (\mu,\lambda) + p^l (\mu, \nu) \big) e_\rho \\
\hphantom{\rho_L^{-1}( \beta_{b,a}) e_\lambda =}{} \times \sum_{\delta \in A(p^a)} \e \big( t q_{p^a}(\delta) - r(\delta, \lambda)_{p^a} + (\mu, \delta)_{p^a} \big).
\end{gather*}
Let us rewrite the sum over $\nu \in A$ as a sum over $\gamma \in A(p^a)$, with $p^a \gamma = \nu$. This map is not one-to-one; its kernel is given by $ \frac{1}{p^a} L/L \subseteq A(p^a)$. Thus we need to divide by $\big| \frac{1}{p^a} L/L \big| = p^{a \dim(L)}$. We get
\begin{gather*}
\rho_L^{-1}( \beta_{b,a}) e_\lambda= \frac{p^{- \frac{3 a}{2} \dim(L)}}{|A|^2}\\
\hphantom{\rho_L^{-1}( \beta_{b,a}) e_\lambda=}{}\times \sum_{ \rho, \mu \in A} \sum_{\gamma \in A(p^a)} \e\big( b r q(\lambda) - p^{2l} t q_{p^a}(\gamma) - (p^a \gamma, \rho) - b (\mu,\lambda) + p^l (\mu, p^a \gamma) \big) e_\rho \\
\hphantom{\rho_L^{-1}( \beta_{b,a}) e_\lambda=}{}\times \sum_{\delta \in A(p^a)} \e \big( t q_{p^a}(\delta) - (p^a \delta, r \lambda - \mu) \big).
\end{gather*}
We then do a shift $\delta \mapsto \delta - p^l \gamma$. Since $p^l \gamma \in A\big(p^{a-l}\big) \subseteq A(p^a)$, the shift does not change the sum. We get
\begin{gather*}
\rho_L^{-1}( \beta_{b,a}) e_\lambda= \frac{p^{- \frac{3 a}{2} \dim(L)}}{|A|^2} \sum_{\rho, \mu \in A} \sum_{\gamma \in A(p^a)} \e\big( b r q(\lambda) - (p^a \gamma, \rho) - b (\mu,\lambda) + r p^l (p^{a} \gamma, \lambda) \big) e_\rho \\
\hphantom{\rho_L^{-1}( \beta_{b,a}) e_\lambda=}{} \times \sum_{\delta \in A(p^a)} \e \big( t q_{p^a}(\delta) - (p^a \delta, r \lambda - \mu) - t p^l (\delta, \gamma)_{p^a} \big).
\end{gather*}
We now do a shift $\mu \mapsto \mu + r \lambda $ to get
\begin{gather*}
\rho_L^{-1}( \beta_{b,a}) e_\lambda= \frac{p^{- \frac{3 a}{2} \dim(L)}}{|A|^2} \sum_{\rho, \mu \in A} \sum_{\gamma \in A(p^a)} \e\big( {-} b r q(\lambda) - ( \gamma, \rho)_{p^a} - b (\mu,\lambda) + r p^l ( \gamma, \lambda)_{p^a} \big) e_\rho \\
\hphantom{\rho_L^{-1}( \beta_{b,a}) e_\lambda=}{} \times \sum_{\delta \in A(p^a)} \e \big( t q_{p^a}(\delta) + (p^a \delta, \mu) - t p^l (\delta, \gamma)_{p^a} \big).
\end{gather*}
The sum over $ \mu \in A$ is non-zero and equal to $|A|$ if and only if $p^a \delta = b \lambda$ mod $L$. Moreover, as we have seen in the calculation for the $l \geq a$ case, we can substitute $\delta = b \delta'$ and replace the sum over $\delta \in A(p^a)$ such that $p^a \delta = b \lambda$ by a sum over $\delta' \in A(p^a)$ such that $p^a \delta' = \lambda$. Thus we can write
\begin{gather*}
\rho_L^{-1}( \beta_{b,a}) e_\lambda= \frac{p^{- \frac{3 a}{2} \dim(L)}}{|A|} \sum_{\rho \in A} \sum_{\gamma \in A(p^a)} \e\big( {-} b r q(\lambda) - ( \gamma, \rho)_{p^a} + r p^l ( \gamma, \lambda)_{p^a} \big) e_\rho \\
\hphantom{\rho_L^{-1}( \beta_{b,a}) e_\lambda=}{} \times \sum_{\substack{\delta' \in A(p^a) \\ p^a \delta' = \lambda}} \e \big( t b^2 q_{p^a}(\delta') - b t p^l (\delta', \gamma)_{p^a} \big).
\end{gather*}
Using $b t=r p^a -1$,
\begin{gather*}
\rho_L^{-1}( \beta_{b,a}) e_\lambda= \frac{p^{- \frac{3 a}{2} \dim(L)}}{|A|} \sum_{\rho \in A} \sum_{\gamma \in A(p^a)} \e\left( - ( \gamma, \rho)_{p^a} \right) e_\rho \sum_{\substack{\delta' \in A(p^a) \\ p^a \delta' = \lambda}} \e \big( {-} b q_{p^a}(\delta') + p^l (\delta', \gamma)_{p^a} \big).
\end{gather*}
The sum over $\gamma \in A(p^a)$ is non-zero and equal to $|A(p^a)|$ if and only if $\rho = p^l \delta'$ mod $L$. In particular, since $\rho \in A$, this implies that the sum can be non-zero only for $\delta' \in A\big(p^l\big) \subseteq A(p^a)$. Thus we get
\begin{align*}
\rho_L^{-1}( \beta_{b,a}) e_\lambda& = \frac{p^{- \frac{3 a}{2} \dim(L)} |A(p^a)|}{|A|}
\sum_{\substack{\delta' \in A(p^l) \\ p^a \delta' = \lambda}} \e \big( {-} b p^{a-l} q_{p^l}(\delta') \big) e_{p^l \delta'} \\
& = p^{- \frac{ a}{2} \dim(L)} \sum_{\substack{\mu \in A \\ p^{a-l} \mu = \lambda}}
\sum_{\substack{\delta' \in A(p^l) \\ p^l \delta' = \mu}} \e \big( {-} b p^{a-l} q_{p^l}(\delta') \big) e_{\mu}.\tag*{\qed}
\end{align*}\renewcommand{\qed}{}
\end{proof}

\subsection*{Acknowledgements}

We would like to thank Duiliu-Emanuel Diaconescu for interesting discussions and collaboration at the initial stages of this project. We would also like to thank Terry Gannon, Jeff Harvey and Martin Raum for useful discussions. Finally, we would like to thank the anonymous referees for their very valuable comments. We acknowledge the support of the Natural Sciences and Engineering Research Council of Canada.

\pdfbookmark[1]{References}{ref}
\LastPageEnding

\end{document}